\newcommand{\RR}{\mathbf{R}}
\newcommand{\ZZ}{\mathbf{Z}}
\newcommand{\NN}{\mathbf{N}}
\newcommand{\PP}{\mathbf{P}}
\renewcommand{\SS}{\mathbf{S}}
\newcommand{\cA}{\mathcal{A}}
\newcommand{\cC}{\mathcal{C}}
\newcommand{\cH}{\mathcal{H}}
\newcommand{\cL}{\mathcal{L}}
\newcommand{\cM}{\mathcal{M}}
\DeclareMathOperator{\ind}{ind}
\DeclareMathOperator{\nul}{nul}
\DeclareMathOperator{\support}{spt}
\DeclareMathOperator{\dist}{dist}
\DeclareMathOperator{\divg}{div}
\newcommand{\eps}{\varepsilon}
\theoremstyle{plain} \newtheorem{defi}{Definition}
\theoremstyle{plain} \newtheorem{rema}[defi]{Remark}
\theoremstyle{plain} \newtheorem{theo}[defi]{Theorem}
\theoremstyle{plain} \newtheorem{prop}[defi]{Proposition}
\theoremstyle{plain} \newtheorem{coro}[defi]{Corollary}
\theoremstyle{plain} \newtheorem{lemm}[defi]{Lemma}
\theoremstyle{plain} 
\theoremstyle{plain} 
\theoremstyle{plain} \newtheorem*{theo*}{Theorem}
\theoremstyle{plain} 
\theoremstyle{plain} 
\theoremstyle{plain} \newtheorem{clai}[defi]{Claim}
\numberwithin{equation}{section} 
\numberwithin{defi}{section} 
\title[Ancient gradient flows]{Ancient gradient flows of elliptic functionals and Morse index}
\date{}
\author{Kyeongsu Choi}
\author{Christos Mantoulidis}
\address{Korea Institute for Advanced Study (KIAS), 85 Hoegiro, Dongdaemun-gu, Seoul 02455, Republic of Korea}
\address{Brown University, Box 1917, 151 Thayer Street, Providence, RI 02912}
\begin{document}
	
\begin{abstract}
We study closed ancient solutions to gradient flows of elliptic functionals in Riemannian manifolds, including mean curvature flow and harmonic map heat flow.  Our work has various consequences. In all dimensions and codimensions, we classify ancient mean curvature flows in $\SS^n$ with low area: they are  steady or shrinking equatorial spheres. In the mean curvature flow case in $\SS^3$, we classify ancient flows with more relaxed area bounds: they are steady or shrinking equators or Clifford tori. In the embedded curve shortening case in $\SS^2$, we completely classify ancient flows of bounded length: they are steady or shrinking circles.
\end{abstract}

\maketitle


\section{Introduction} \label{sec:introduction}

\subsection{Mean curvature flow} \label{subsec:intro.mcf}

The mean curvature flow is a one-parameter family of submanifolds $\Sigma_t$ of a Riemannian manifold $(M, \overline{g})$ satisfying the evolution equation
\begin{equation} \label{eq:mcf.pde}
	\tfrac{\partial}{\partial t} x = \mathbf{H}(x, t), \; x \in \Sigma_t,
\end{equation}
where $\mathbf{H}(x, t)$ denotes the mean curvature vector of the $\Sigma_t$ at $x$, and which is the negative gradient of the area element of $\Sigma_t$. As a gradient flow of the area functional, the mean curvature flow describes a natural area minimizing process. Moreover, in Euclidean space $(M, \overline{g})= (\RR^n, dx_1^2 + \ldots, dx_n^2)$, the normalized mean curvature flow is a gradient flow of the Huisken density \cite{Huisken90}.

In this paper, we shall discuss ancient solutions of the mean curvature flow in Riemannian manifolds; that is, solutions existing for $t\in (-\infty,T)$. Since the mean curvature flow is a gradient flow, ancient solutions with finite energy are quite rare. Therefore, the classification of ancient solutions has been studied as a type of parabolic Liouville theory. 

There have been a number of important classification results for ancient mean curvature flows inside  Euclidean space, under suitable assumptions on the convexity or the entropy of the flow:
\begin{itemize}
	\item X.-J. Wang \cite{Wang11}  showed that a closed \emph{convex} ancient solution converges locally to a sphere or a cylinder after rescaling. Huisken--Sinestrari \cite{HuiskenSinestrari15} proved that a closed convex ancient solution with suitably \emph{pinched curvatures} must be the shrinking sphere. In the one dimensional case, Daskalopoulos--Hamilton--Sesum \cite{DaskalopoulosHamiltonSesum10} proved that the shrinking circle and the Angenent ovals are the only closed ancient solutions.
	\item Angenent--Daskalopoulos--Sesum \cite{AngenentDaskalopoulosSesum15, AngenentDaskalopoulosSesum18} showed that the only \emph{non-collapsed} closed \emph{2-convex} ancient solutions are the shrinking sphere and ancient ovals, which were constructed by White \cite{White03} and later by  Haslhofer--Hershkovits \cite{HaslhoferHershkovits16}. 
	 Bourni--Langford--Tinaglia \cite{BourniLangfordTinaglia} have constructed \emph{collapsed} examples of closed convex ancient flows.
	\item Brendle and the first author \cite{BrendleChoi19, BrendleChoi18} settled the uniqueness of \emph{non-collapsed} complete non-compact \emph{2-convex} ancient flows.
	\item The first author, Haslhofer and Hershkovits showed in \cite{ChoiHaslhoferHershkovits}, en route to proving Ilmanen's ``mean convex neighborhood'' conjecture, that a \emph{low entropy} ancient solution in $\RR^3$ must be one of the convex complete (or closed) non-collapsed ancient solutions, which were classified in \cite{AngenentDaskalopoulosSesum18} and \cite{BrendleChoi19}.
\end{itemize}

Much less is known about ancient solutions in Riemannian manifolds:
\begin{itemize}
	\item Huisken--Sinestrari \cite{HuiskenSinestrari15} showed that closed \emph{mean convex} and suitably \emph{curvature pinched} ancient solutions in $\SS^n$, $n \geq 3$, must be a shrinking spherical cap.
	\item Bryan--Louie \cite{BryanLouie16} showed that the only closed \emph{convex} ancient solutions in $\SS^2$ are shrinking circles. Bryan--Ivaki--Scheuer \cite{BryanIvakiScheuer} extended that conclusion   to \emph{convex fully nonlinear} flows in $\SS^n$, $n \geq 3$, including the mean curvature flow.
\end{itemize} 

In this work we study ancient mean curvature flows of closed submanifolds in Riemannian manifolds, as well as more general gradient flows of elliptic functionals. The goal, roughly, is to derive a sharp characterization of a large class of ancient flows as arising from the ``unstable manifold'' (i.e., the space of unstable directions for the area functional) of a given closed minimal submanifold. See Section \ref{subsec:intro.abstract} for the more abstract framework. 

Our work can be used to classify \emph{low area} ancient solutions in $\SS^n$ in arbitrary codimension:

\begin{theo} \label{theo:mcf.allard}
	There exists a $\delta = \delta(n) > 0$ such that if $(\Sigma_t)_{t \leq 0}$ is an ancient mean curvature flow of closed $m$-dimensional surfaces embedded in a round sphere $\SS^n$, with
	\begin{equation} \label{eq:mcf.allard.assumption}
	\lim_{t \to -\infty} \operatorname{Area}(\Sigma_t) < (1 + \delta) \operatorname{Area}(\SS^m),
	\end{equation}
	then $(\Sigma_t)_{t \leq 0}$ is a steady  or a shrinking equatorial $\SS^m$ one along one of $n-m$ orthogonal directions.
\end{theo}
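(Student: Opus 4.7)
The plan is to reduce the classification to the abstract unstable-manifold framework of Section~\ref{subsec:intro.abstract}, applied at an equatorial $\SS^m$, after showing that the backward limit of the flow must be such an equator. Since MCF is the negative gradient flow of area, $t\mapsto\area(\Sigma_t)$ is nonincreasing, so the hypothesis bounds $\area(\Sigma_t)\le(1+\delta)\area(\SS^m)$ on $(-\infty,0]$; integrating the identity $\tfrac{d}{dt}\area(\Sigma_t)=-\int_{\Sigma_t}|\mathbf{H}|^2$ then gives finite total $L^2$ mean curvature on $(-\infty,0]$, so there exists a sequence $t_j\to-\infty$ along which $\int_{\Sigma_{t_j}}|\mathbf{H}|^2\to 0$. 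Allard's compactness theorem extracts a subsequential varifold limit $V_\infty$ in $\SS^n$ that is stationary, integer rectifiable, and of mass at most $(1+\delta)\area(\SS^m)$.

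The next step is to show that $V_\infty$ is a unit-multiplicity equatorial $\SS^m$. I would use a small-area gap: any closed minimal $m$-subvariety of $\SS^n$ other than an equator (e.g.\ Clifford-type products) has area strictly greater than $\area(\SS^m)$, so there is a definite gap $\eta_0=\eta_0(n,m)>0$. Allard's regularity theorem applies pointwise because the density of $V_\infty$ is forced close to one everywhere by the upper mass bound together with monotonicity, upgrading $V_\infty$ to a smooth minimal submanifold; the gap then forces $V_\infty$ to be exactly an equatorial $\SS^m$ of multiplicity one once $\delta<\eta_0/\area(\SS^m)$. Standard smoothing upgrades the subsequential varifold convergence $\Sigma_{t_j}\to V_\infty$ to smooth graphical convergence, and shows the full backward limit of the area equals $\area(\SS^m)$.

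With the backward limit identified as a fixed equator $\SS^m\subset\SS^n$, I would invoke the abstract framework of Section~\ref{subsec:intro.abstract}. The Jacobi operator on the normal bundle of $\SS^m\subset\SS^n$ decouples across the $n-m$ orthonormal normal directions as $-\Delta_{\SS^m}-m$, whose spectrum $\{k(k+m-1)-m\}_{k\ge 0}$ has exactly one negative eigenvalue per direction (constants) and a kernel spanned by linear functions (ambient rotations of $\SS^n$ that move the equator). The unstable manifold of $\SS^m$ is therefore $(n-m)$-dimensional modulo ambient isometries, parametrized by translating $\SS^m$ along one of the $n-m$ normal axes; the corresponding ancient flows are exactly the shrinking latitudinal $\SS^m$'s along those axes. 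The abstract classification then forces the given flow to lie on this finite-dimensional unstable manifold, yielding either the steady equator or a shrinking $\SS^m$ along one of the $n-m$ orthogonal directions.

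The main obstacle I anticipate is the third stage: deploying the abstract framework in high codimension, where the Jacobi operator is vector-valued and the kernel from ambient rotations is comparatively large. A \L{}ojasiewicz--Simon-type rigidity together with a clean quotient by the ambient isometry group is what will turn ``lying on the unstable manifold'' into the stated dichotomy. The first two stages, by contrast, are soft consequences of area monotonicity, Allard's compactness and regularity, and the area gap for minimal subvarieties of $\SS^n$.
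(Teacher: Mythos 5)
Your outline tracks the paper's strategy faithfully through the first two stages, but the third stage --- which you correctly flag as the ``main obstacle'' --- is where the real work lies, and you do not close it. The gaps are as follows.

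On stage two, a logical point. You assert a ``small-area gap'' for closed minimal $m$-subvarieties of $\SS^n$ and treat it as an input. As stated this is not a free fact: for stationary integral varifolds no such gap is available before regularity, and even among smooth closed minimal submanifolds a uniform area gap above $\operatorname{Area}(\SS^m)$ in all dimensions and codimensions is not a classical theorem. The clean route, which the paper takes, is to apply Allard's $\eps$-regularity theorem to the cone $0\# V_\infty$: its density at the origin equals $\operatorname{Area}(V_\infty)/\operatorname{Area}(\SS^m) < 1 + \delta$, so for $\delta$ small the cone must be a multiplicity-one plane, hence $V_\infty$ is a multiplicity-one equator. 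Your phrasing (``the density is forced close to one everywhere by the upper mass bound together with monotonicity'') gestures at this, but the ``gap'' is a \emph{consequence} of Allard applied to the cone, not an independently known fact.

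On stage three, more substantively: after showing that \emph{every} backward subsequential limit is a multiplicity-one equator, you cannot yet invoke the abstract framework. The equators form a nontrivial moduli space (a Grassmannian), and a priori the flow could drift among nearby equators as $t\to-\infty$, or could converge too slowly for the characterization theorem to apply. The paper closes this with two dedicated steps which you do not carry out. (i) Lemma~\ref{lemm:mcf.analytic.uniqueness}: a backward-in-time \L{}ojasiewicz--Simon argument, using that $\SS^n$ is real-analytic, shows that once $\Sigma_{t_i}\to S_0$ along one sequence for a fixed equator $S_0$, the flow $\Sigma_t$ stays uniformly $C^{2,\theta}$-close to that \emph{same} $S_0$ for all sufficiently negative $t$. (ii) Proposition~\ref{prop:mcf.integrable}: the integrability of the equator (its Jacobi fields are exactly generated by nearby equators of the same area, cf.\ \cite[Proposition 5.1.1]{Simons68} and \eqref{eq:critical.point.space}), together with $\lim_{t\to-\infty}\operatorname{Area}(\Sigma_t)=\operatorname{Area}(\SS^m)$ established in stage two, yields \emph{exponential} $C^{2,\theta}$ convergence to a fixed (possibly different) equator. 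Only then is the $L^1$-in-time decay hypothesis \eqref{eq:mcf.characterization.assumption} of Theorem~\ref{theo:mcf.characterization} satisfied, which is what makes the flow lie on the finite-dimensional unstable manifold. Without the $L^1$ rate the characterization is false in general; see Appendix~\ref{sec:slow.convergence} for slow examples. Your sentence ``the abstract classification then forces the given flow to lie on this finite-dimensional unstable manifold'' conceals precisely this missing decay input. Relatedly, ``a clean quotient by the ambient isometry group'' is not what the paper does and is not obviously needed: the mechanism is integrability of the critical point, not a symmetry reduction. Your Jacobi-operator computation and the Morse index count $n-m$ are correct.
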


\begin{rema} \label{rema:mcf.high.codim}
	Mean curvature flow has been studied primarily in codimension 1 as a result of the more subtle nature of singularity formation in high codimensions. On the other hand, the parabolic Liouville nature of  ancient gradient flows renders them quite rigid. As such, they can serve as a tangible stepping stone to a better understanding of high codimension mean curvature flows. Theorem \ref{theo:mcf.allard} is an example of a result for mean curvature flows that can be obtained just as easily in high codimension as in codimension 1. 
\end{rema}

This theorem has some interesting consequences. First, it recovers the classification of convex ancient mean curvature flows in $\SS^n$ in \cite{BryanIvakiScheuer, BryanLouie16}, since ancient convex solutions will satisfy \eqref{eq:mcf.allard.assumption}.

Second, it implies a complete classification of ancient embedded curve shortening flows in $\SS^2$ with bounded length:

\begin{coro} \label{coro:csf}
	Let $(\Gamma_t)_{t \leq 0}$ be an ancient curve shortening flow of embedded curves inside a round 2-sphere with
	\begin{equation} \label{eq:csf.length.assumption}
	\lim_{t \to -\infty} \operatorname{Length}(\Gamma_t) < \infty.
	\end{equation}
	Then $(\Gamma_t)_{t \leq 0}$ is a steady or a shrinking equator along circles of latitude.
\end{coro}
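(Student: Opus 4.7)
The plan is to verify the hypothesis of Theorem~\ref{theo:mcf.allard} in the setting $m=1$, $n=2$, where $\operatorname{Area}(\SS^1) = 2\pi$. Concretely, it suffices to show $L_0 := \lim_{t \to -\infty} \operatorname{Length}(\Gamma_t) < (1+\delta(2)) \cdot 2\pi$, for then an immediate application of that theorem yields the conclusion. Since curve shortening flow is the gradient flow of length, $L(t) := \operatorname{Length}(\Gamma_t)$ is non-increasing in $t$, so under \eqref{eq:csf.length.assumption} the limit $L_0 = \sup_t L(t)$ exists and is finite.

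I would first record two monotone backward constraints. The identity $L(t_1) - L(t_2) = \int_{t_1}^{t_2} \int_{\Gamma_t} \kappa^2 \, ds \, dt$ together with finiteness of $L_0$ gives $\int_{-\infty}^{0} \int_{\Gamma_t} \kappa^2 \, ds \, dt < \infty$, so the $L^2$ norm of curvature, averaged over any unit time window $[t_k-1, t_k]$, tends to zero as $t_k \to -\infty$. Meanwhile, Gauss--Bonnet applied to the disk $\Omega_t \subset \SS^2$ bounded by $\Gamma_t$ yields the linear ODE $\tfrac{d}{dt} \operatorname{Area}(\Omega_t) = \operatorname{Area}(\Omega_t) - 2\pi$, hence $\operatorname{Area}(\Omega_t) \to 2\pi$ exponentially fast as $t \to -\infty$. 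Combined with the spherical isoperimetric inequality $L^2 \geq \operatorname{Area}(\Omega)(4\pi - \operatorname{Area}(\Omega))$, this yields the lower bound $L_0 \geq 2\pi$ for free.

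The heart of the argument --- and where I expect the main work --- is the matching upper bound $L_0 \leq 2\pi$. Given any sequence $t_k \to -\infty$, translate the flow by $t_k$ and invoke standard Brakke-type compactness, together with the averaged $L^2$-curvature decay, to extract a subsequential limit that is a stationary integral $1$-varifold in $\SS^2$, hence a finite union of great circles with positive integer multiplicities totalling length $L_0$. The main obstacle is to rule out any limit consisting of more than one great circle or carrying multiplicity strictly greater than one (either would force $L_0 \geq 4\pi$). The combination of embeddedness of $\Gamma_t$ and the constraint $\operatorname{Area}(\Omega_t) \to 2\pi$ should exclude such degenerate limits: topologically, a multiplicity-$k$ limit ($k \geq 2$) would require the embedded simple closed curve $\Gamma_t$ to consist of $k$ nearly parallel arcs along the limit geodesic, and the enclosed-area-$\to 2\pi$ configurations of such arcs are incompatible with propagation of embeddedness along an ancient flow. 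This can likely be formalized via a Brakke $\varepsilon$-regularity argument for the translated flows, upgrading varifold convergence to smooth multiplicity-one convergence, or alternatively via Angenent's monotonicity of transverse intersection numbers between $\Gamma_t$ and a fixed test meridian.

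Once the backward limit is identified as a single great circle with multiplicity one, so that $L_0 = 2\pi$, the hypothesis of Theorem~\ref{theo:mcf.allard} is satisfied, and that theorem identifies $(\Gamma_t)_{t \leq 0}$ as a steady or shrinking equatorial $\SS^1$ in $\SS^2$, i.e., a steady or shrinking circle of latitude.
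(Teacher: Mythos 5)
Your high-level plan is exactly the paper's: show \eqref{eq:csf.length.assumption} forces \eqref{eq:mcf.allard.assumption} and then invoke Theorem~\ref{theo:mcf.allard}. You also reach the same key intermediate fact (a sequence $t_i \to -\infty$ with $\int_{\Gamma_{t_i}} \kappa^2\,d\ell \to 0$, hence a backward subsequential limit that is a union of great circles with integer multiplicities totalling length $L_0$).

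Where you diverge, and where you leave a real gap, is in upgrading the compactness and excluding multiplicity $\geq 2$. The paper does both cleanly in one stroke: the $L^2$ curvature decay plus bounded length gives a uniform $W^{2,2}$ bound on arc-length parametrizations, so Sobolev embedding together with Allard's varifold compactness yields subsequential convergence of $\Gamma_{t_{i'}}$ \emph{as $C^{1,\theta}$ curves}, $\theta \in (0,\tfrac12)$, to a stationary $C^{1,1/2}$ curve; such a curve is a multiple cover of an equator, and a simple degree argument (an embedded closed curve in the annular tubular neighborhood of an equator is either nullhomotopic or isotopic to the core, so generic fibers meet it in $0$ or $1$ points) rules out multiplicity greater than one under $C^0$ convergence. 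You instead invoke Brakke compactness (which a priori only gives measure/varifold convergence and carries the multiplicity and ``several great circles'' ambiguities) and then gesture at two possible mechanisms --- Brakke $\eps$-regularity for the translated flows, or Angenent intersection-number monotonicity --- without carrying either out. That step is where your argument is incomplete: ``embeddedness should exclude such limits'' is not a proof, and neither of your two proposed mechanisms is actually executed. The Gauss--Bonnet computation giving $\operatorname{Area}(\Omega_t) \to 2\pi$ exponentially is a genuinely nice auxiliary observation (and gives the cheap lower bound $L_0 \geq 2\pi$), but it does not by itself rule out a multiplicity-$k$ limit, since the topology of $\Omega_t$ near a $k$-fold cover can be delicate and the area constraint alone doesn't immediately pin down $k=1$. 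In short: the strategy matches, the multiplicity exclusion needs the degree argument (or an equivalent) made precise, and the $C^{1,\theta}$ upgrade via $W^{2,2}$ Sobolev embedding is a simpler route than Brakke theory here.
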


Huisken conjectures there exist ancient solutions that fill out $\SS^2$ as $t\to -\infty$, so one expects that assumption \eqref{eq:csf.length.assumption} is sharp.

\begin{rema} \label{rema:csf.rp2}
	One gets a classification of ancient embedded curve shortening flows with bounded length in $\RR \PP^2$ by lifting to $\SS^2$ and applying Corollary \ref{coro:csf}: they are steady equators and circles of latitude coming out of a multiplicity two equator. A similar proof shows that no nonsteady ancient embedded curve shortening flows with bounded length exist in flat tori or closed hyperbolic surfaces.
\end{rema}

Theorem \ref{theo:mcf.allard} can also be strengthened in $n=3$ dimensions due to the validity of the Willmore conjecture, proven by Marques--Neves \cite{MarquesNeves14}. Recall that the Clifford torus
\[ \{ (x, y, z, w) \in \RR^2 \times \RR^2 : x^2 + y^2 = z^2 + w^2 = \tfrac12 \} \subset \SS^3 \]
is a smoothly embedded minimal submanifold of $\SS^3$ with area $2\pi^2$. By the work of Marques--Neves \cite{MarquesNeves14}, this is the second smallest area among \emph{smooth} minimal surfaces, following the equatorial $\SS^2$ (area $4\pi$). We can show:

\begin{coro} \label{coro:mcf.s3}
	Let $(\Sigma_t)_{t \leq 0}$ be an ancient mean curvature flow of closed surfaces in a round $\SS^3$, with
	\begin{equation} \label{eq:mcf.s3.assumption}
		\lim_{t \to -\infty} \operatorname{Area}(\Sigma_t) < 2\pi^2 + \delta.
	\end{equation}
	If $\delta > 0$ is sufficiently small, then either:
	\begin{itemize}
		\item $\lim_{t \to -\infty} \operatorname{Area}(\Sigma_t) = 4\pi$, and $(\Sigma_t)_{t \leq 0}$ is a steady or shrinking equator along spheres of latitude; or,
		\item $\lim_{t \to -\infty} \operatorname{Area}(\Sigma_t) = 2\pi^2$, and $(\Sigma_t)_{t \leq 0}$ is a steady or shrinking Clifford torus along one of its 5 linearly unstable directions. 
	\end{itemize}
\end{coro}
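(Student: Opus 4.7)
The plan is to reduce Corollary \ref{coro:mcf.s3} to Theorem \ref{theo:mcf.allard} and to the abstract unstable-manifold framework of Section \ref{subsec:intro.abstract} via a blowdown analysis as $t \to -\infty$ that uses the Marques--Neves \cite{MarquesNeves14} solution of the Willmore conjecture to identify the asymptotic minimal surface.

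First, since MCF is the $L^2$-gradient flow of area, $t \mapsto \operatorname{Area}(\Sigma_t)$ is non-increasing, so $A_\infty := \lim_{t \to -\infty} \operatorname{Area}(\Sigma_t)$ exists and is $< 2\pi^2 + \delta$ by assumption. Along any sequence $t_k \to -\infty$, Brakke/varifold compactness extracts a subsequential limit $V_\infty$, which is stationary in $\SS^3$ with mass $A_\infty < 8\pi$. By Allard regularity, $V_\infty$ has density $1$ almost everywhere, and off a closed singular set it is a smooth embedded minimal surface. Standard removable-singularity arguments combined with the Marques--Neves theorem (the Willmore inequality for immersed surfaces in $\SS^3$) force $V_\infty$ to be either a multiplicity-$1$ equatorial $\SS^2$ (area $4\pi$) or an isometric copy of the multiplicity-$1$ Clifford torus (area $2\pi^2$): these are the only closed minimal surfaces in $\SS^3$ of mass $\leq 2\pi^2 + \delta$ for $\delta > 0$ small enough, by Choi--Schoen type compactness and the isolatedness of the Clifford torus in moduli.

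In the equator case, $A_\infty = 4\pi < (1 + \delta')\operatorname{Area}(\SS^2)$ for any $\delta' > 0$, so Theorem \ref{theo:mcf.allard} directly identifies the flow as a steady or shrinking equator along spheres of latitude. In the Clifford torus case, parabolic regularity upgrades varifold convergence to smooth convergence, so that for $t \ll 0$ the surface $\Sigma_t$ is a small normal graph over the Clifford torus $\mathbb{T}$. A direct spectral computation on the flat product $\mathbb{T} = \SS^1(1/\sqrt{2}) \times \SS^1(1/\sqrt{2})$ shows that the Jacobi operator is $J = -\Delta_{\mathbb{T}} - 4$, whose negative eigenvalues are $-4$ (multiplicity $1$, constant mode, generating the shrinking of $\mathbb{T}$) and $-2$ (multiplicity $4$, the first Fourier modes, generating the four conformal deformations of $\mathbb{T}$ in $\SS^3$), for total Morse index exactly $5$ and nullity $4$ coming from ambient isometries. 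Applying the abstract framework of Section \ref{subsec:intro.abstract} to $\mathbb{T}$, the ancient flow is either steady or lies along one of these $5$ linearly unstable eigendirections, exactly as stated.

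The main obstacle is carrying out the Clifford torus case in full. This requires (a) upgrading the varifold convergence near $t = -\infty$ to a smooth exponential convergence realizing $\Sigma_t$ as a normal graph over $\mathbb{T}$ for all sufficiently negative $t$, (b) running a Merle--Zaag-type dichotomy on the graph function to prove that the ancient flow is asymptotically driven by a single dominant unstable eigenmode of $J$, and (c) using the $SO(4)$-action on $\SS^3$ together with time translation to normalize the limit onto one of the $5$ distinguished directions named in the statement. Step (b) is where essentially all of the abstract machinery of the paper is invested; the rest is bookkeeping around Marques--Neves and Theorem \ref{theo:mcf.allard}.
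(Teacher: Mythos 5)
Your proposal tracks the paper's overall architecture (blowdown, identify the backward limit via Marques--Neves, then invoke the abstract existence/uniqueness machinery), but there is a genuine gap at the key step where you identify the subsequential backward limit $V_\infty$ as a smooth equator or Clifford torus. You appeal to ``standard removable-singularity arguments'' plus Marques--Neves, but Marques--Neves bounds the Willmore energy of \emph{smooth immersed} surfaces in $\SS^3$, so one must first show the stationary integral varifold $V_\infty$ is smooth. This is not a standard removable singularity argument: a priori $V_\infty$ could have a nontrivial singular set (not even isolated points), and Allard's theorem does not give density $1$ almost everywhere without further input. The paper devotes an entire lemma (Lemma~\ref{lemm:willmore.weak}) to this, via an iterated tangent-cone analysis: one blows up at a singular point of the 3-cone $0\#V_\infty$, reduces to a 1-dimensional stationary cone in $\RR^2$, and uses a density bound of $\tfrac12\pi$ together with a \emph{parity} argument (the number of half-rays is even) to rule out singularities. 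The parity step requires knowing $V_\infty$ arises as a $\ZZ_2$ cycle with zero boundary, which comes from White's theorem \cite{White09} on currents associated to limits of Brakke flows. None of this is visible in your sketch, and without it the Willmore-conjecture input is inapplicable.

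Two secondary gaps: first, you only assume $A_\infty < 2\pi^2 + \delta$, but Lemma~\ref{lemm:willmore.weak} (and the Willmore bound) needs the sharper inequality $\leq 2\pi^2$; the paper closes this by a compactness argument with $\delta_k \to 0$ to reduce the general case to the strict one --- your appeal to ``Choi--Schoen compactness and isolatedness'' points in the right direction but skips the varifold-level argument needed before one knows the limits are smooth. Second, you do not address why the backward limit is a \emph{unique} equator or Clifford torus rather than drifting through the moduli of such surfaces; the paper handles this via the \L{}ojasiewicz--Simon inequality (Lemma~\ref{lemm:mcf.analytic.uniqueness}) combined with the integrability of equators and Clifford tori (Hsiang--Lawson for the latter), which is what ultimately guarantees the exponential convergence needed to feed into the $L^1$ hypothesis of Theorem~\ref{theo:mcf.characterization}. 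Your spectral computation of the Clifford torus index is correct and is a fine alternative to citing Urbano, but the identification of the limit --- the heart of the matter --- is not yet a proof.
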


Recall also that the number of linearly unstable directions (the ``Morse index'') of a Clifford torus was computed by Urbano \cite{Urbano90} to be 5. See \cite{MarquesNeves14} for a geometric interpretation of these 5 unstable directions.

We now summarize our tools, which should be interesting in their own right. First, we prove that the Morse index of a minimal submanifold gives rise to a family of exponentially decaying ancient mean curvature flows:

\begin{theo}[cf. Theorem \ref{theo:existence}]  \label{theo:intro.mcf.existence}
	Let $S$ be a closed, smoothly embedded minimal submanifold in a Riemannian manifold $(M, \overline{g})$ with Morse index $I \in \NN$. Then there exists an $I$-parameter family of ancient mean curvature flows on $(-\infty, 0]$ that are uniquely determined by their trace at time $t=0$ and converge exponentially quickly to $S$ as $t \to -\infty$.
\end{theo}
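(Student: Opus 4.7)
The plan is to carry out a standard Lyapunov--Perron construction of an unstable manifold for the mean curvature flow at the equilibrium $S$. First, I would parametrize MCFs $\Sigma_t$ near $S$ as normal graphs $\Sigma_t = \{\exp_p^\perp(u(p,t)) : p \in S\}$ for sections $u$ of the normal bundle $NS$. In these coordinates the MCF equation becomes a quasilinear parabolic system
\[
\partial_t u + L u = Q(u, \nabla u, \nabla^2 u),
\]
where $L$ is the Jacobi (stability) operator of $S$, self-adjoint on $L^2(S; NS)$, and $Q = O(|u|^2 + |\nabla u|^2 + |u||\nabla^2 u|)$ collects the higher order terms. Because $S$ is closed, $L$ has discrete spectrum $\lambda_1 \le \lambda_2 \le \cdots \to +\infty$, of which exactly $I$ eigenvalues are negative. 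Let $E_-$ be the $I$-dimensional span of the corresponding eigensections, $P_-$ the $L^2$-orthogonal projection onto $E_-$, and $P_+ := \mathrm{Id} - P_-$.

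Setting $a := |\lambda_I|$ and fixing $\alpha \in (a/2, a)$, I would solve MCF via a fixed-point argument for the Lyapunov--Perron map
\[
\mathcal{F}_v[u](t) := e^{-tL}v + \int_0^t e^{-(t-s)L} P_- Q(u(s))\, ds + \int_{-\infty}^t e^{-(t-s)L} P_+ Q(u(s))\, ds,
\]
parametrized by $v \in E_-$, acting on the weighted parabolic H\"older space
\[
X := \big\{ u \in C^{2,\beta}_{\mathrm{loc}}(S \times (-\infty, 0]; NS) : \|u\|_X := \sup_{t \le 0} e^{-\alpha t}\|u(\cdot, t)\|_{C^{2,\beta}(S)} < \infty\big\}.
\]
A fixed point $u_v$ of $\mathcal{F}_v$ is a classical ancient MCF with $P_- u_v(0) = v$ and $\|u_v(\cdot, t)\|_{C^{2,\beta}} = O(e^{\alpha t})$ as $t \to -\infty$. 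The spectral bounds $\|e^{-tL}|_{E_-}\|_{L^2 \to L^2} = e^{at}$ for $t \le 0$ and $\|e^{-tL}|_{E_+}\|_{L^2 \to L^2} = O(1)$ for $t \ge 0$, upgraded to $C^{2,\beta}$ by parabolic Schauder theory, combine with the quadratic estimate $\|Q(u)\|_{C^\beta} \le C\|u\|_{C^{2,\beta}}^2$ to show $\mathcal{F}_v : B_r \subset X \to B_r$ is a contraction for $r$ and $|v|$ sufficiently small.

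Letting $v$ range over a small ball in $E_-$ produces the desired $I$-parameter family. The map $v \mapsto u_v(\cdot, 0)$ has derivative at $0$ equal to the inclusion $E_- \hookrightarrow C^{2,\beta}(S; NS)$, so by the implicit function theorem distinct $v$'s yield distinct time-$0$ traces. Conversely, if $u^1, u^2 \in X$ are two ancient MCFs converging exponentially to $S$ with $u^1(\cdot, 0) = u^2(\cdot, 0)$, then $P_- u^i(0)$ agree, so both $u^i$ are fixed points of the same $\mathcal{F}_v$ and must coincide by the contraction principle.

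The main technical obstacle will be the delicate weight-and-norm bookkeeping underlying the contraction estimate. The exponent $\alpha$ must be chosen in the narrow window $(a/2, a)$: the condition $\alpha < a$ is needed so that the linear unstable mode $e^{-tL}v$ lies in $X$, while $2\alpha > a$ is needed so that the unstable Duhamel integral $\int_0^t e^{-(t-s)L} P_- Q(u(s))\, ds$ converges and decays like $e^{\alpha t}$. A possibly nontrivial $\ker L$ (absorbed into $P_+$) introduces no new complication, since $P_0 u$ obeys $\partial_t P_0 u = O(e^{2\alpha t})$ and its backward integral from $-\infty$ automatically lies in $X$. The remaining step is to promote $L^2$-based semigroup estimates to $C^{2,\beta}$ estimates, which is standard on a compact manifold.
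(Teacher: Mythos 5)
Your proposal is correct and follows essentially the same route as the paper: both construct the unstable manifold via a contraction mapping in an exponentially weighted parabolic H\"older space, with the linear solution operator given by exactly the Duhamel formulas split between the unstable projection (integrated from $0$) and the stable-plus-kernel projection (integrated from $-\infty$); the paper writes this out in eigenfunction coordinates (Lemma~\ref{lemm:linear.existence.L2} and Corollary~\ref{coro:linear.existence.holder}) rather than in semigroup notation, and then runs the same fixed-point argument in Theorem~\ref{theo:existence}. One small inaccuracy: your claim that $2\alpha > a$ is ``needed'' is not correct --- for $2\alpha < a$ the unstable Duhamel integral still converges (it is a finite integral over $[t,0]$) and decays like $e^{2\alpha t}$, which is even faster than $e^{\alpha t}$; the paper accordingly allows the full range $\delta_0 \in (0,-\lambda_I)$, though your narrower window of course still works.
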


Only few non-convex (or nonpositively curved) ancient solutions to geometric flows have been previously discovered; see, e.g., the ancient Yamabe flow from two spheres \cite{DaskalopoulosdelPinoSesum}. Theorem \ref{theo:intro.mcf.existence} shows the existence of infinitely many non-steady non-convex ancient solutions.

Second, we prove a sharp characterization of ancient flows; if a flow decays as $t \to -\infty$ in an ``integrable'' ($L^1$) sense, then it is one of the flows that was generated by the Morse index.

\begin{theo}[cf. Theorem \ref{theo:mcf.characterization}]  \label{theo:intro.mcf.characterization}
	Let $S$ be a closed, smoothly embedded minimal submanifold of a Riemannian manifold $(M, \overline{g})$. There exists an $\eps > 0$ such that if $(\Sigma_t)_{t \leq 0}$ is an ancient mean curvature flow which stays uniformly $\eps$-close to $S$ in the sense of measures, and
	\begin{equation} \label{eq:intro.mcf.characterization.assumption}
		\int_{-\infty}^0 \dist_{\overline{g}}(\Sigma_t, S) \, dt < \infty,
	\end{equation}
	then there exists $\tau \geq 0$ so that $(\Sigma_{t-\tau})_{t \leq 0}$ is one of the flows from Theorem \ref{theo:intro.mcf.existence}.
\end{theo}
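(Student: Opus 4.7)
The plan is to reduce Theorem~\ref{theo:intro.mcf.characterization} to an invariant-manifold analysis of a semilinear parabolic PDE near $S$, and then match the resulting solution to a member of the family produced by Theorem~\ref{theo:intro.mcf.existence} after a suitable time shift.

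\textbf{Step 1 (Graphical reduction).} Combine the measure closeness of $\Sigma_t$ to $S$ with parabolic (Brakke/Allard-type) regularity to upgrade to $C^k$ closeness, uniformly in $t$. Once $\eps$ is small, $\Sigma_t$ is the normal graph of a smooth small section $u(t)\in\Gamma(NS)$ for all $t\leq 0$, and the mean curvature flow takes the form
\[
\partial_t u = Lu + Q(u,\nabla u,\nabla^2 u),
\]
where $L$ is the (self-adjoint) Jacobi operator of $S$ on sections of the normal bundle $NS$ and $Q$ is smooth and at least quadratic.

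\textbf{Step 2 (Spectral dichotomy).} Decompose $L^2(NS)=E_+\oplus E_0\oplus E_-$ into the $I$-dimensional unstable, null, and stable eigenspaces of $L$, and split $u(t)=u_+(t)+u_0(t)+u_-(t)$. The projected equations, combined with the fact that $u(t)\to 0$ as $t\to-\infty$ (which follows from uniform smallness together with \eqref{eq:intro.mcf.characterization.assumption}), force $u_-\equiv 0$: otherwise $\|u_-(t)\|$ would grow exponentially backward in time. A Merle--Zaag-type ODE dichotomy applied to the remaining projections yields two alternatives: either (a) $\|u_+(t)\|$ dominates $\|u_0(t)\|$ as $t\to-\infty$, giving exponential decay at a rate determined by $L|_{E_+}$, or (b) $\|u_0(t)\|$ dominates $\|u_+(t)\|$, in which case center-manifold theory makes $u(t)$ asymptotic to a trajectory on the local center manifold of $S$; such trajectories decay at most algebraically (or are steady if $S$ is integrable). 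A Lojasiewicz--Simon estimate for the area functional at $S$, combined with the $L^1$ hypothesis \eqref{eq:intro.mcf.characterization.assumption}, rules out (b). Therefore $u$ lies on the $I$-dimensional local unstable manifold $\cW^u(S)$ tangent to $E_+$ at $S$, and decays exponentially.

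\textbf{Step 3 (Matching and time shift).} The family produced by Theorem~\ref{theo:intro.mcf.existence} is parametrized by its time-$0$ traces, which sweep out a neighborhood of $S$ inside $\cW^u(S)$. Pick $\tau\geq 0$ large enough (possible by the exponential decay of $u$ from Step 2) that $\Sigma_{-\tau}$ falls into that parametrization, and let $\tilde\Sigma_t$ be the family member with $\tilde\Sigma_0=\Sigma_{-\tau}$. Backward uniqueness for smooth closed ancient mean curvature flows (Kotschwar-type) gives $\tilde\Sigma_t=\Sigma_{t-\tau}$ for all $t\leq 0$, which is the claim.

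\textbf{Main obstacle.} The delicate step is Step 2: specifically, executing the center-manifold analysis and using the $L^1$ hypothesis \eqref{eq:intro.mcf.characterization.assumption} to exclude any nontrivial center-dominated component when $\ker L\neq 0$. This requires a careful coupling of a Merle--Zaag dichotomy on the projected evolution with a Lojasiewicz--Simon estimate at $S$, and is the point at which the $L^1$-integrability hypothesis becomes sharp.
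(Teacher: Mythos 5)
Your outline has the right general shape (graphical reduction, spectral splitting, Merle--Zaag, time-shift matching), but Step~2 and Step~3 contain concrete errors that matter.

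First, your claim that the projected equations force $u_-\equiv 0$ (the stable component vanishing identically) is false. The nonlinearity couples all modes, so the stable projection is generically nonzero; what is actually proved in the paper (Claim~4.4) is the much weaker statement $U_+ + U_0 \lesssim \sigma\, U_-$, where $\sigma(t)=\Vert u(\cdot,t)\Vert_{C^2}\to 0$, i.e., the stable and neutral parts are \emph{subdominant relative to} the unstable part, not zero. This distinction is what makes the later contraction-mapping matching work.

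Second, and more seriously, you invoke a \L{}ojasiewicz--Simon estimate at $S$ to rule out the center-dominated alternative. This is neither available nor needed in the generality of the theorem: the statement is for an arbitrary smooth $(M,\overline g)$, whereas \L{}ojasiewicz--Simon requires real analyticity (the paper uses it only in Lemma~\ref{lemm:mcf.analytic.uniqueness}, under a real-analyticity hypothesis). The entire point of Theorem~\ref{theo:characterization} is that the $L^1$ hypothesis substitutes for \L{}ojasiewicz--Simon: in the proof of Claim~4.4, if the neutral mode were to dominate, the Merle--Zaag dichotomy together with the Riccati bound $V'\lesssim V^2$ forces $U_0(t)\gtrsim |t|^{-1}$, which contradicts the $L^1$ finiteness directly. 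No analyticity is used. Replacing this with \L{}ojasiewicz--Simon would silently restrict the theorem to real-analytic ambients.

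Third, your Step~3 does not supply what the uniqueness in Theorem~\ref{theo:existence} actually demands, namely that after shifting, the flow $u^{(\tau)}$ lies in a ball $\{w:\Vert w-\iota_-(\bm a)\Vert_{C^{2,\theta,\delta_0}_P}\le \mu|\bm a|^2\}$ of \emph{quadratic} radius around an explicit exponential ansatz. Establishing this requires the iterated Merle--Zaag ``peeling'' (Claims 4.6--4.8) that isolates the dominating unstable eigenvalue $\lambda_{I^*}$ and yields the \emph{sharp} decay $\sigma(t)\lesssim e^{-\lambda_{I^*}t}$ — not merely the crude $e^{-\lambda_I t}$ decay your argument would give. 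Without it, the error terms in $\|u^{(\tau)}-\iota_-(\bm a^{(\tau)})\|$ cannot be shown to be $O(|\bm a^{(\tau)}|^2)$. Relatedly, your invocation of Kotschwar-type backward uniqueness is both circular and unnecessary: to find a family member $\tilde\Sigma$ with $\tilde\Sigma_0=\Sigma_{-\tau}$ you must already know $\Sigma_{-\tau}$ lies in the image of the trace map, which is precisely what needs proving; the paper instead concludes directly via the contraction-mapping uniqueness built into Theorem~\ref{theo:existence}, rendering any separate backward-uniqueness theorem superfluous.

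Finally, a smaller point: the passage from the measure-theoretic closeness to the graphical PDE is not a direct application of the abstract Theorem~\ref{theo:characterization}, because mean curvature flow is the \emph{parametric} gradient flow of area, not the nonparametric gradient flow of the graph functional (Remark~\ref{rema:mcf.vs.abstract}). The paper tracks exactly where the variational identity \eqref{eq:negative.gradient.flow} (as opposed to the generic quasilinear form \eqref{eq:characterization.nonlinear}) is used — only for the Caccioppoli-type inequality \eqref{eq:characterization.w12.l2} — and verifies it separately for the parametric flow. Your outline assumes this identification without comment.
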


Assumption \eqref{eq:intro.mcf.characterization.assumption} is key for the conclusion. Indeed, in Appendix \ref{sec:slow.convergence} we construct examples of flows which are not generated by a negative eigenfunction and which decay arbitrarily slowly as $t \to -\infty$. 

We also give a sufficient geometric condition which guarantees the decay needed for \eqref{eq:intro.mcf.characterization.assumption}. Indeed, we show that ancient flows that remain suitably close to a so-called ``integrable critical point'' (see Definition \ref{defi:integrable}), will converge exponentially quickly, as $t \to -\infty$, to a (possibly different) critical minimal submanifold. This notion of integrability and its implication on rates of convergence was pioneered by Allard and Almgren  \cite{AllardAlmgren81} in their study of tangent cones of minimal surfaces with isolated singularities.

\begin{prop}[cf. Proposition \ref{prop:mcf.integrable}]  \label{prop:intro.mcf.integrable}
	Let $S$ be an integrable, closed, smoothly embedded minimal submanifold of a Riemannian manifold $(M, \overline{g})$. There exist $\eps, c, \kappa > 0$ such that if $(\Sigma_t)_{t \leq 0}$ is an ancient mean curvature flow which stays uniformly $\eps$-close to $S$ in the sense of measures, and
	\begin{equation} \label{eq:intro.mcf.integrable.assumption}
		\lim_{t \to -\infty} \operatorname{Area}_{\overline{g}}(\Sigma_t) \leq \operatorname{Area}_{\overline{g}}(S),
	\end{equation}
	then $\Sigma_t$ is $c e^{\kappa t}$-close, in the $C^{2,\theta}$ sense, to a (possibly different) fixed, closed, smoothly embedded minimal submanifold.
\end{prop}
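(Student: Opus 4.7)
The plan is to reduce to a parabolic PDE for a normal graph over $S$ and then run a \L{}ojasiewicz--Simon argument; the key feature of the \emph{integrability} hypothesis is that it upgrades the \L{}ojasiewicz exponent to the optimal value $\theta = \tfrac12$, which is precisely what produces the exponential rate.

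Because $(\Sigma_t)_{t \leq 0}$ stays $\eps$-close to $S$ in the sense of measures and $S$ is smoothly embedded, Allard's regularity theorem (for $\eps$ small) writes each $\Sigma_t$ as the normal graph of a small section $u(\cdot,t)$ of the normal bundle of $S$. In these coordinates the mean curvature flow becomes a quasilinear parabolic equation $\partial_t u = -\grad \cA(u) + \text{l.o.t.}$, and the first variation yields the area identity $\tfrac{d}{dt}\cA(\Sigma_t) = -\|\grad\cA(u(t))\|_{L^2}^2$. By monotonicity and assumption \eqref{eq:intro.mcf.integrable.assumption}, the quantity
\begin{equation*}
	G(t) := \cA(S) - \cA(\Sigma_t)
\end{equation*}
is nonnegative and nondecreasing in $t$. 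The integrability of $S$, combined with a Lyapunov--Schmidt reduction of $\cA$ onto the kernel of the Jacobi operator (cf. \cite{AllardAlmgren81}), should produce a \L{}ojasiewicz--Simon inequality with sharp exponent,
\begin{equation*}
	\cA(S) - \cA(u) \;\leq\; C\,\|\grad\cA(u)\|_{L^2}^{2} \quad \text{for } \|u\| \text{ small.}
\end{equation*}
This translates to the ODE $G'(t) \geq G(t)/C$, hence $G(t) \leq G(0)\,e^{\kappa t}$ with $\kappa = 1/C$, i.e., exponential decay of the area gap as $t \to -\infty$.

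To upgrade to convergence of $u$ itself, apply the standard Simon trick: the identities $\|\partial_t u\|_{L^2} = \|\grad\cA(u)\|_{L^2} = \sqrt{G'(t)}$ together with $\sqrt{G} \leq \sqrt{C}\,\|\grad\cA\|_{L^2}$ give
\begin{equation*}
	\|\partial_t u(t)\|_{L^2} \;\leq\; 2\sqrt{C}\,\tfrac{d}{dt}\sqrt{G(t)}.
\end{equation*}
Integrating over $(-\infty,t]$ produces a limit section $u_\infty \in L^2$ with $\|u(t) - u_\infty\|_{L^2} \leq 2\sqrt{C\,G(t)} \leq C' e^{\kappa t/2}$; since $\partial_t u \to 0$, the limit $u_\infty$ is a critical point of $\cA$, hence parametrizes a smoothly embedded minimal submanifold $S'$ in the integrable family near $S$. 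Interior parabolic Schauder estimates on unit time intervals, using that $u$ stays uniformly $C^{2,\theta}$-bounded by Allard plus elliptic regularity, then upgrade the $L^2$ exponential decay to $C^{2,\theta}$ exponential decay, at the possible expense of shrinking $\kappa$.

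The main obstacle is the \L{}ojasiewicz--Simon inequality itself. Its derivation requires Fredholm analysis of the Jacobi operator $L_S$ and a Lyapunov--Schmidt reduction of $\cA$ onto the finite-dimensional $\ker L_S$; the integrability hypothesis is then exactly the assertion that $\cA$ vanishes identically (rather than merely to some high finite order) along the critical manifold tangent to $\ker L_S$, and it is this vanishing which forces the optimal exponent $\theta = 1/2$ and thereby the exponential rate of convergence of the flow.
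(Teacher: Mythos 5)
Your proposal takes a genuinely different route from the paper. The paper proves Proposition \ref{prop:integrable}/\ref{prop:mcf.integrable} by iterating a ``three-time-interval'' improvement lemma (Lemma \ref{lemm:integrable.technical.lemma}), in the style of Allard--Almgren and of Simon's Lemma 6.4: one normalizes a sequence of putative counterexamples, passes to a blowup limit solving the linear heat equation $\partial_t \hat u = L\hat u$, decomposes the limit as $\hat\phi + \iota_-(\hat{\bm a})$ with $\hat\phi\in\ker L$, uses Definition \ref{defi:integrable} to produce a nearby critical point $\psi_{**}$ approximating $\psi_* + \beta^{(k)}\hat\phi$, and derives a contradiction; iterating the lemma gives geometric decay and hence exponential convergence. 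Your route instead tries to prove a \L{}ojasiewicz--Simon inequality with the \emph{optimal} exponent $\theta = 1/2$ and then run Simon's ODE argument.

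The problem is that the central step of your argument --- ``[integrability] should produce a \L{}ojasiewicz--Simon inequality with sharp exponent'' --- is asserted, not proved, and it is exactly the part that is nontrivial. Establishing the optimal exponent from the Lyapunov--Schmidt reduction requires knowing that $\cA_{\operatorname{fin}}$ is \emph{constant} on a full neighborhood of $\bm 0\in\RR^K$ (equivalently, that the critical set near $S$ is a $K$-dimensional manifold on which the energy is constant). Definition \ref{defi:integrable} only says that every Jacobi field $\phi\in\ker L$ is tangent to a one-parameter family of equal-energy critical points; it is not immediate that the union of these curves fills out a neighborhood of $\bm 0$ in $\RR^K$. (The paper explicitly distinguishes these: Remark \ref{rema:integrable.sufficient}(2) notes that \emph{analyticity} forces $\cA_{\operatorname{fin}}$ to be constant, but the proof of Proposition \ref{prop:integrable} uses only Definition \ref{defi:integrable}, and it uses it verbatim --- by producing $\psi_{**}$ near $\psi_* + \beta^{(k)}\hat\phi$ --- without ever claiming constancy of $\cA_{\operatorname{fin}}$ or a \L{}ojasiewicz inequality.) So your proposal has a real gap at its crux: if you want to go through a Morse--Bott type \L{}ojasiewicz inequality, you owe an argument that Definition \ref{defi:integrable} upgrades to the Morse--Bott structure.

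Two smaller points. First, the identity $\tfrac{d}{dt}\cA(\Sigma_t)=-\|\grad\cA(u(t))\|_{L^2}^2$ is not literal, because mean curvature flow is a ``parametric'' gradient flow while $\grad\cA$ comes from the ``nonparametric'' area functional; the paper addresses this (Remark \ref{rema:mcf.vs.abstract} and the proof of Proposition \ref{prop:mcf.integrable}) by showing the two quantities are comparable up to fixed constants, which is what your ODE actually needs. Second, once a \L{}ojasiewicz inequality with $\theta=1/2$ were available, the rest of your argument (Simon's trick integrating $\|\partial_t u\|_{L^2}$ against $\tfrac{d}{dt}\sqrt{G}$ backward in time, then Schauder bootstrapping $L^2$ to $C^{2,\theta}$) is correct and parallels standard practice. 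In short: the overall architecture is sound, but the key step that produces the exponential rate is missing a proof, and the paper deliberately takes a different, self-contained route that avoids needing it.
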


Two important cases that automatically guarantee \eqref{eq:intro.mcf.integrable.assumption} are:
\begin{enumerate}
	\item when $S$ is nondegenerate (i.e., its linearization has no eigenvalues equal to zero), or
	\item when the ambient Riemannian metric is real analytic.
\end{enumerate}
See Remark \ref{rema:integrable.sufficient}.

\subsection{General theory} \label{subsec:intro.abstract}

Let $(\Sigma, g)$ be a closed Riemannian manifold and $V \to \Sigma$ be a Euclidean vector bundle. We consider ancient solutions to gradient flows for functionals of the form
\begin{equation} \label{eq:elliptic.functional}
	\cA(f) := \int_\Sigma A(x, f(x), \nabla_g f(x)) \, d\mu_g(x),
\end{equation}
whose arguments are sections $f$ of the bundle $V$ and whose integrand $A(x, z, q)$ is such that:
\begin{enumerate}
	\item $A(x, z, q)$ is a smooth real-valued function of $(x, z, q)$, $x \in \Sigma$, $z \in V_x$, $q \in T_x \Sigma \otimes V_x$;
	\item $A(x, z, q)$ satisfies the Legendre--Hadamard ellipticity condition 
		\begin{equation} \label{eq:elliptic.functional.convexity}
			\left[ \tfrac{d^2}{ds^2} A(x, 0, s (\tau \otimes v)) \right]_{s=0} \geq c |\tau|^2 |v|^2,
		\end{equation}
		for $c > 0$ independent of $x \in \Sigma$, $\tau \in T_x \Sigma$, $v \in V_x$.
\end{enumerate}
The negative $L^2$ gradient of $\cA(f)$, denoted $\cH(f)$, is determined by the pairing
\begin{equation} \label{eq:negative.gradient}
	\langle \cH(f), \zeta \rangle_{L^2(\Sigma)} = - \left[ \tfrac{d}{ds} \cA(f + s \zeta) \right]_{s=0}, \; \forall \zeta \in C^\infty(\Sigma; V).	
\end{equation}
A ``gradient flow'' of $\cA$ is an  evolution equation
\begin{equation} \label{eq:negative.gradient.flow}
	\tfrac{\partial}{\partial t} u = \cH(u).
\end{equation}
A solution $u$ of \eqref{eq:negative.gradient.flow} is called \emph{ancient} if its time domain contains an interval of the form $(-\infty, T)$, $T \in \RR$. In this paper we are interested in smooth solutions of \eqref{eq:negative.gradient.flow}.

Our main results are described below. We refer the reader to Sections \ref{sec:notation},  \ref{sec:existence}, \ref{sec:uniqueness} for all relevant definitions and precise statements.

\begin{theo}[cf. Theorem \ref{theo:existence}, Corollary \ref{coro:existence}] \label{theo:intro.existence}
	Let $0$ be a critical point of $\cA$ with Morse index $I \in \NN$. There exists an $I$-parameter family of ancient solutions to \eqref{eq:negative.gradient.flow}, which are uniquely determined by their trace at $t=0$ and which converge to $0$ exponentially as $t \to -\infty$. The space of their traces at $t=0$ is tangent to the $I$-dimensional space of negative eigenfunctions.
\end{theo}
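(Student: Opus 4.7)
The plan is a classical unstable-manifold construction via Duhamel's formula and a Banach contraction in a weighted space of exponentially decaying flows. Linearize $\mathcal{H}$ at the critical point $0$ to write $\mathcal{H}(u) = Lu + N(u)$. Because $\mathcal{H}$ is the $L^2$ negative gradient of the elliptic functional $\mathcal{A}$, the operator $L$ is formally self-adjoint and, by the Legendre--Hadamard condition, elliptic on sections of $V\to\Sigma$, so it has discrete spectrum bounded above. Let $\mu_1\geq\cdots\geq\mu_I>0$ be its positive eigenvalues (this count equals the Morse index), let $E^+$ be their span (the unstable subspace), set $E^- := (E^+)^\perp$, and denote by $P^\pm$ the associated orthogonal projections. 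For $t\leq 0$ and $\sigma\geq 0$ one has $\|e^{Lt}\|_{E^+\to E^+}\leq e^{\mu_I t}$ and $\|e^{L\sigma}\|_{E^-\to E^-}\leq 1$. Since $\mathcal{A}$ is a local second-order functional, $N$ is quasilinear of the same order as $L$ but with vanishing linear part, giving $\|N(u)\|_{C^{0,\theta}}\lesssim \|u\|_{C^{2,\theta}}^2$ for $u$ small.

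For a parameter $v\in E^+$ small, I would look for an ancient solution $u:(-\infty,0]\to C^{2,\theta}(\Sigma;V)$ satisfying the Duhamel identity
\[
u(t) = e^{Lt} v + \int_0^t e^{L(t-s)} P^+ N(u(s)) \, ds + \int_{-\infty}^t e^{L(t-s)} P^- N(u(s)) \, ds .
\]
Differentiating verifies $\partial_t u = Lu + N(u) = \mathcal{H}(u)$; the integral from $-\infty$ to $t$ on $E^-$ is the unique choice forcing $u^-(t)\to 0$, while the integral from $0$ to $t$ on $E^+$ sets $P^+u(0)=v$. Fix $\alpha\in(0,\mu_I)$ and work in the weighted Banach space
\[
X_\alpha := \Bigl\{ u\in C^0((-\infty,0]; C^{2,\theta}(\Sigma;V)) : \|u\|_\alpha := \sup_{t\leq 0} e^{-\alpha t} \|u(t)\|_{C^{2,\theta}} < \infty \Bigr\}.
\]
Combining the spectral bounds above with parabolic Schauder estimates for the analytic semigroup generated by $L$ (which upgrade $C^{0,\theta}$-control of $N(u)$ to $C^{2,\theta}$-control of the integrals), the right-hand side $\Phi_v(u)$ satisfies $\|\Phi_v(u)\|_\alpha \lesssim \|v\| + \|u\|_\alpha^2$ and $\|\Phi_v(u)-\Phi_v(\tilde u)\|_\alpha \lesssim (\|u\|_\alpha+\|\tilde u\|_\alpha)\|u-\tilde u\|_\alpha$. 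Hence $\Phi_v$ is a contraction on a small ball of $X_\alpha$ whenever $\|v\|$ is small enough, yielding a unique fixed point $u_v$ with $\|u_v(t)\|_{C^{2,\theta}}\lesssim e^{\alpha t}$; standard parabolic bootstrap then promotes $u_v$ to a smooth solution of $\partial_t u=\mathcal{H}(u)$.

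The assignment $v\mapsto u_v$ is injective because $P^+u_v(0)=v$, so the family is genuinely $I$-parameter. Evaluating at $t=0$ gives
\[
u_v(0) = v + \int_{-\infty}^0 e^{-Ls} P^- N(u_v(s)) \, ds = v + O(\|v\|^2),
\]
so by the implicit function theorem the set of traces $\{u_v(0)\}$ is a smooth embedded $I$-dimensional submanifold through $0$, tangent there to $E^+$, and each member of the family is uniquely determined by its trace at $t=0$, as claimed.

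The main obstacle is the functional-analytic matching of spaces across the quasilinear nonlinearity $N$, which carries the same order of derivatives as $L$: one must pick the topology (here $C^{2,\theta}$, or else a suitable Sobolev or little-Hölder scale) so that $N$ is a smooth map into a controlled ``derivative-loss'' space, and then exploit the analytic-semigroup smoothing of $e^{Lt}$ uniformly on the unbounded interval $(-\infty,0]$. A secondary subtlety is the possibly degenerate case $\ker L\neq 0$, where the $E^-$ propagator contributes no decay; this is handled by choosing $\alpha<\mu_I$ strictly, so that the quadratic decay rate $2\alpha$ of $N(u(s))$ dominates the otherwise-undecaying $E^-$ integrals.
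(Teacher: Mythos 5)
Your proposal is correct and follows essentially the same Lyapunov--Perron contraction-mapping strategy as the paper: the paper's Lemma~\ref{lemm:linear.existence.L2} and Corollary~\ref{coro:linear.existence.holder} are precisely your Duhamel representation written out mode-by-mode (the explicit formulas \eqref{eq:linear.coefficient.ansatz.negative}--\eqref{eq:linear.coefficient.ansatz.positive} are the eigenspace projections of your two integrals), and the fixed-point argument in Theorem~\ref{theo:existence}, carried out in the same weighted space $C^{2,\theta,\delta_0}_P$, matches your contraction in $X_\alpha$. The only cosmetic difference is that you phrase the linear solver in semigroup/projection language while the paper writes it in Fourier series; the key estimates and the choice of decay weight $\alpha<-\lambda_I$ are identical.
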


\begin{theo}[cf. Theorem \ref{theo:characterization}] \label{theo:intro.characterization}
	Let $0$ be a critical point of $\cA$, $\theta \in (0, 1)$, $C_0 > 0$. There exists $\eps > 0$ such that if $u : \Sigma \to V$ is a smooth ancient solution of \eqref{eq:negative.gradient.flow} with spatial $C^1$ norm bounded by $\eps$, parabolic $C^{1,\theta}$ norm bounded by $C_0$, and finite spacetime $L^1$ norm, then $u$ belongs to the space of solutions from Theorem \ref{theo:intro.existence}.
\end{theo}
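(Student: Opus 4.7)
My plan is to realize $u$ as a fixed point of the Lyapunov--Perron operator that already underlies the construction of Theorem~\ref{theo:intro.existence}, parametrized by its unstable projection at $t=0$. The proof proceeds in three steps: extract pointwise decay at $-\infty$ from the spacetime $L^1$-finiteness, upgrade this to exponential decay via the spectral gap, and identify $u$ with the unique fixed point sharing its unstable data.

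Let $L$ denote the self-adjoint elliptic linearization of $\cH$ at $0$, and $L^2(\Sigma;V)=E_-\oplus E_0\oplus E_+$ the associated spectral decomposition into eigenspaces with negative, zero, and positive eigenvalues. Fix $\mu>0$ strictly below the spectral gap on $E_\pm$. Writing $\eqref{eq:negative.gradient.flow}$ as $\partial_t u = Lu + Q(u)$, the Taylor expansion of $\cH$ provides $\|Q(u)\|_{C^0(\Sigma)}\le C\|u\|_{C^1(\Sigma)}^2$ whenever $\|u\|_{C^1(\Sigma)}\le\eps_0$.

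First, I would apply parabolic Schauder estimates to $\partial_t u-Lu=Q(u)$ on unit-length parabolic cylinders $\Sigma\times[t-1,t+1]$, together with interpolation between $L^1$ and $C^{1,\theta}$, to obtain
\[
\|u(\cdot,t)\|_{C^1(\Sigma)}\le C\bigl(\|u\|_{L^1(\Sigma\times[t-1,t+1])}\bigr)^{\alpha}C_0^{1-\alpha}
\]
for some $\alpha\in(0,1)$ depending only on the dimension and $\theta$. Since $\int_{-\infty}^{0}\|u(s)\|_{L^1(\Sigma)}\,ds<\infty$, the right-hand side tends to $0$ as $t\to-\infty$, giving $\|u(\cdot,t)\|_{C^1(\Sigma)}\to0$.

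Second, I would decompose $u=u_++u_0+u_-$ and use variation of parameters adapted to the asymptotics at $-\infty$ to arrive at
\[
u_+(t)=e^{tL_+}u_+(0)-\int_t^0 e^{(t-s)L_+}Q_+(u(s))\,ds,
\]
\[
u_0(t)=\int_{-\infty}^{t}Q_0(u(s))\,ds,\qquad u_-(t)=\int_{-\infty}^{t}e^{(t-s)L_-}Q_-(u(s))\,ds,
\]
the latter two valid because $u_0(t),u_-(t)\to 0$ as $t\to-\infty$. Iterating the right-hand side as a Picard map in the weighted Banach space $X_\mu:=\{v:\sup_{t\le 0}e^{-\mu t}\|v(t)\|_{C^1}<\infty\}$, the quadratic bound on $Q$ and the exponential smoothing of $e^{(t-s)L_\pm}$ make the map a contraction on a small ball for each fixed small $a:=u_+(0)\in E_+$. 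Hence $u$ decays like $e^{\mu t}$ and is the unique fixed point with prescribed unstable datum $a$. Since the ancient solution furnished by Theorem~\ref{theo:intro.existence} associated to $a$ is constructed as the same fixed point in the same weighted space, the two coincide.

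The principal obstacle is the kernel $E_0$: the convergence of $\int_{-\infty}^{0}Q_0(u)\,ds$ and the contractivity of the Picard map along $E_0$ both require a genuine decay rate for $u$, not merely $\|u(t)\|_{C^1}\to0$. Obtaining this rate is a bootstrap problem: step one places $u$ in $L^p_t(C^1_x)$ for some $p>1$, the quadratic estimate on $Q$ then places $Q(u)$ in $L^{p/2}_t$, and feeding this back through the integral formulas iteratively enhances the decay exponent up to any rate strictly below the spectral gap. This is the same structural mechanism underlying the Allard--Almgren integrability analysis referenced in the introduction, and where the finite spacetime $L^1$ hypothesis is essential.
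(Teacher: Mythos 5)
Your general plan---decompose into spectral components, write Duhamel/Lyapunov--Perron representations, and identify $u$ as the unique fixed point sharing unstable data---is a natural one, and the opening step (interpolating $L^1$ against $C^{1,\theta}$ via Schauder to get $\|u(\cdot,t)\|_{C^1}\to 0$) matches the paper's first move. However, there is a genuine gap at the kernel, and the mechanism you propose to fill it does not close.

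The Picard/contraction argument in the weighted space $X_\mu$ relies on the semigroup $e^{tL}$ providing exponential decay on the relevant spectral blocks. Along $E_0=\ker L$ the semigroup is the identity, so the map $u_0(t)=\int_{-\infty}^t Q_0(u(s))\,ds$ gains nothing from the linear dynamics, and the convergence of that integral with a rate must come entirely from the nonlinearity. The bootstrap you sketch ($u\in L^p_t\Rightarrow Q(u)\in L^{p/2}_t\Rightarrow$ better decay) cannot start: from $u\in L^1_t\cap L^\infty_t$ you get $u\in L^p_t$ for all $p$, but this gives no \emph{pointwise rate} of decay of $\|u(t)\|$ as $t\to-\infty$, and without a rate the Duhamel integral along $E_0$ does not produce one. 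Appendix A of the paper is precisely a warning here: there exist ancient flows near a degenerate critical point decaying arbitrarily slowly, so no purely semigroup/smoothing argument can establish exponential decay without exploiting the specific structure that rules out neutral dominance. You also invoke the Allard--Almgren integrability mechanism, but that is what drives Proposition 4.9, which \emph{assumes} integrability of the critical point; Theorem 4.1 does not.

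What the paper does instead, and what your proposal is missing, is twofold. First, it uses the gradient-flow identity together with G\aa rding's inequality to derive the Caccioppoli-type bound $\|u(\cdot,t)\|_{W^{1,2}}\lesssim\|u(\cdot,t)\|_{L^2}$ (equation \eqref{eq:characterization.w12.l2}); this is the only place the variational structure of \eqref{eq:negative.gradient.flow} enters, and it is essential to make the nonlinear error controllable by $\sigma(t)\|u(\cdot,t)\|_{L^2}$ rather than $\sigma(t)\|u(\cdot,t)\|_{W^{1,2}}$. Second, it passes to the scalar ODE system for $U_-,U_0,U_+$ and applies the Merle--Zaag dichotomy (Lemma \ref{lemm:mz.ode}); the crucial observation in Claim \ref{clai:characterization.pminus.dominant} is that if $U_0$ dominated, then $U_0(t)\gtrsim |t|^{-1}$, which directly contradicts $\sigma\in L^1(\RR_-)$. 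That contradiction is exactly how the finite spacetime $L^1$ hypothesis is used to kill the kernel. Your proposal identifies the kernel as the obstacle, but the contraction-mapping / Lyapunov--Perron framework alone has no tool to resolve it; you would need to import the Caccioppoli inequality and the Merle--Zaag ODE argument (or an equivalent substitute) before a fixed-point identification with the family of Theorem~\ref{theo:intro.existence} becomes possible.
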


\begin{prop}[cf. Proposition \ref{prop:integrable}] \label{prop:intro.integrable}
	Let $0$ be an integrable critical point of $\cA$, and $\theta \in (0, 1)$. There exist $\eps, c, \kappa > 0$ such that if $u$ is a smooth ancient solution of \eqref{eq:negative.gradient.flow} with parabolic $C^{1,\theta}$ norm bounded by $\eps$ and
	\begin{equation} \label{eq:intro.integrable.assumption}
		\lim_{t \to -\infty} \cA(u(\cdot, t)) \leq \cA(0),
	\end{equation}
	then $u$ is $c e^{\kappa t}$-close in the parabolic $C^{1,\theta}$ sense to a fixed (but possibly different) critical point of $\cA$.
\end{prop}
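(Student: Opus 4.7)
The plan is to combine an optimal-exponent Lojasiewicz--Simon gradient inequality, which integrability provides, with the fact that $\cA$ is monotone along the flow, and then run the standard convergence argument backwards in time. Set $E(t) := \cA(u(\cdot, t)) - \cA(0)$, so that $E'(t) = -\|\cH(u(\cdot, t))\|_{L^2}^2 \leq 0$; monotonicity together with \eqref{eq:intro.integrable.assumption} forces $E(t) \leq 0$ for all $t$. Integrability (as developed in the preceding sections) should yield a Lojasiewicz--Simon inequality with the optimal exponent $1/2$, namely
\[
|\cA(f) - \cA(0)| \leq C\, \|\cH(f)\|_{L^2}^2
\]
for every $f$ with sufficiently small $C^{1,\theta}$ norm. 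The idea behind this inequality is to use the implicit function theorem to parametrize the smooth manifold $\cM$ of critical points tangent to $\ker d\cH(0)$, observe that $\cA$ is constant on $\cM$, decompose a nearby $f$ into its $\cM$-projection $p$ and a transverse piece $w = f - p$, and exploit the nondegeneracy of the linearization on the orthogonal complement of $T_p\cM$: the Taylor expansion of $\cA$ at $p$ and the leading-order expression for $\cH(f)$ then give $|\cA(f) - \cA(p)| \lesssim \|w\|^2$ and $\|\cH(f)\|_{L^2} \gtrsim \|w\|$.

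Setting $\phi(t) := (-E(t))^{1/2} \geq 0$, the Lojasiewicz inequality combined with the gradient flow equation yields
\[
\phi'(t) \;=\; \frac{\|\cH(u(\cdot,t))\|_{L^2}^2}{2\phi(t)} \;\geq\; \frac{\phi(t)}{2C},
\]
so Gronwall produces $\phi(t) \leq \phi(0)\, e^{t/(2C)}$ as $t \to -\infty$, hence $|E(t)| \leq |E(0)|\, e^{t/C}$. Rewriting the same inequality as $\|\cH(u)\|_{L^2} \leq 2\sqrt{C}\, \phi'(t)$ and integrating in time, I obtain
\[
\int_{-\infty}^{T} \|\partial_t u(\cdot, s)\|_{L^2} \, ds \;\leq\; 2\sqrt{C}\, \phi(T) \;=\; O(e^{\kappa T}).
\]
Consequently $u(\cdot, t)$ is $L^2$-Cauchy as $t \to -\infty$ and converges with exponential rate to some limit $u_\infty$.

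To finish, I would upgrade the convergence to the parabolic $C^{1,\theta}$ topology without losing the exponential rate. The uniform parabolic $C^{1,\theta}$ bound combined with Arzela--Ascoli identifies $u_\infty$ as a $C^{1,\theta}$ section, and $\|\cH(u(\cdot, t))\|_{L^2} \to 0$ forces $\cH(u_\infty) = 0$, so $u_\infty$ is a critical point of $\cA$ (possibly distinct from $0$, but lying on $\cM$). For the rate, I would linearize the flow around $u_\infty$ and apply interior parabolic Schauder estimates on the unit parabolic cylinders $(t-1,t] \times \Sigma$ to bound $\|u - u_\infty\|_{C^{1,\theta}(\{t\} \times \Sigma)}$ by $\|u - u_\infty\|_{L^2((t-1,t] \times \Sigma)} = O(e^{\kappa t})$, with the nonlinear remainders absorbed by the smallness of $\eps$.

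The main obstacle is establishing the integrable Lojasiewicz--Simon inequality with exponent $1/2$ in the functional-analytic setting adapted to this paper; once that is in hand, the backward Gronwall step and the parabolic regularity upgrade are both essentially standard.
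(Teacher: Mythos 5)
Your approach is genuinely different from the paper's. You propose to deduce from integrability an \emph{optimal} \L{}ojasiewicz--Simon inequality $|\cA(f)-\cA(0)|\le C\|\cH(f)\|_{L^2}^2$ (exponent $1/2$, i.e.\ Morse--Bott type), and then run the standard backward-in-time Gronwall argument: $\phi:=(-E)^{1/2}$ satisfies $\phi'\gtrsim\phi$, giving $\phi(t)=O(e^{\kappa t})$, then $\int_{-\infty}^T\|\partial_t u\|_{L^2}\,dt\lesssim\phi(T)$ gives exponential $L^2$ convergence to a limiting critical point $u_\infty$, and parabolic Schauder on unit cylinders upgrades this to $C^{1,\theta}_P$. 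This is internally consistent and would yield the statement. The paper instead sidesteps \L{}ojasiewicz entirely: it proves an Allard--Almgren/Simon-style geometric improvement lemma (Lemma~\ref{lemm:integrable.technical.lemma}) by a compactness-and-blowup contradiction argument, getting a factor $\tfrac12$ improvement of $\|u-\psi_*\|_{C^{1,\theta}_P}$ on each time block of fixed length $\tau$ at the cost of replacing $\psi_*$ by a nearby critical point $\psi_{**}$ with the same $\cA$-value (Remark~\ref{rema:integrable.technical.lemma.area}), and then iterates this geometrically in Proposition~\ref{prop:integrable}. The two routes are classically related (the blowup proof of the improvement lemma is essentially the linearization step one would use to prove the optimal \L{}ojasiewicz bound), but they are not the same argument.

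The real gap in your proposal is the one you flag yourself: you do not establish the optimal \L{}ojasiewicz inequality under the paper's Definition~\ref{defi:integrable}, and this is not automatic. That definition only posits that each individual $\phi\in\ker L$ is the velocity of \emph{some} curve of critical points with constant $\cA$-value; your sketch instead needs the critical set near $0$ to be a smooth $K$-dimensional manifold $\cM$ on which $\cA$ is constant, with transversally nondegenerate Hessian (equivalently, via the Lyapunov--Schmidt reduction of Section~\ref{sec:notation.critical.points}, that $\cA_{\operatorname{fin}}\equiv\cA(0)$ and $\nabla\cA_{\operatorname{fin}}\equiv0$ on a full neighborhood of $\bm 0\in\RR^K$). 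Passing from "every Jacobi field integrates to a constant-energy curve" to "$\cA_{\operatorname{fin}}$ is constant near $\bm 0$" requires an argument that you don't give. The paper never asserts this implication — note that Remark~\ref{rema:integrable.sufficient} deduces constancy of $\cA_{\operatorname{fin}}$ from \emph{analyticity}, not from integrability, and the proof of Proposition~\ref{prop:integrable} deliberately avoids needing any \L{}ojasiewicz estimate. Also be aware that if only a non-optimal exponent $\mu<\tfrac12$ were available, your Gronwall step would deliver polynomial rather than exponential decay, so the optimality of the exponent is essential to your scheme; the paper's compactness route obtains exponential decay directly from integrability without passing through a gradient inequality at all. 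If you can supply a proof that $\cA_{\operatorname{fin}}$ is constant near $\bm 0$ under Definition~\ref{defi:integrable} (or strengthen the definition to a Morse--Bott hypothesis), the rest of your argument is sound.
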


See Remark \ref{rema:integrable.sufficient} for natural sufficient conditions that guarantee the validity of \eqref{eq:intro.integrable.assumption}.

\begin{rema} \label{rema:mcf.vs.abstract}
	A subtle remark is in order regarding whether the results of this section immediately imply those of Section \ref{subsec:intro.mcf}. One could hope to immediately recover the results of Section \ref{subsec:intro.mcf} by taking $V$ to be the normal bundle $NS$ of $\Sigma \subset (M, \overline{g})$ and defining the elliptic functional $\cA$ as the area of the graphical submanifold induced by a map $f : S \to NS$. While this is an admissible functional (see, e.g., \cite{Simon83}), we point out that its  gradient flow is a ``nonparametric'' gradient flow, so it differs from the classical mean curvature flow considered in Section \ref{subsec:intro.mcf}, which is a ``parametric'' gradient flow for the area functional of embedded submanifolds. This detail, unfortunately, interferes with the divergence structure of the evolution equation \eqref{eq:negative.gradient.flow}. With this in mind, we have sought to exploit the divergence structure as little as possible in order for our proofs to carry over, with only minor modifications, to the classical mean curvature flow setting in Section \ref{subsec:intro.mcf}. We discuss these modifications in very specific terms in Section \ref{sec:mcf}, where we give the proofs of the results announced in Section \ref{subsec:intro.mcf},
\end{rema}

\subsection{Harmonic map heat flow} \label{subsec:intro.hmhf}

Let $(M, g)$ be a closed Riemannian manifold (the ``domain'') and $(N, h)$ be another Riemannian manifold (the ``target''). The harmonic map heat flow is the gradient flow of the Dirichlet energy functional
\[ E(f) := \tfrac12 \int_M \Vert df \Vert^2 \, d\mu_g, \; f \in C^1(M; N). \]
Namely, it is the flow
\[ \tfrac{\partial}{\partial t} f = \tau(f(\cdot, t)), \]
where $\tau$ denotes the negative $L^2$ gradient of the Dirichlet energy functional. Our results, namely Theorems \ref{theo:intro.existence}, \ref{theo:intro.characterization}, and Proposition \ref{prop:intro.integrable} apply to harmonic map heat flows modulo the same minor modifications that had to be carried out for mean curvature flow; namely, modifications to go from the ``parametric'' gradient flow (the harmonic map heat flow) to the ``nonparametric'' gradient flows discussed in Section \ref{subsec:intro.abstract}.

\subsection*{Outline of paper and some motivation}   

In Section \ref{sec:notation} we set up our notation and relevant necessary background. In Sections \ref{sec:existence}, \ref{sec:uniqueness}, we show the existence and uniqueness of ancient gradient flows within the class of flows that originate, with certain $L^1$ control, out of a critical point. In Section \ref{sec:mcf} we extend our results to ancient mean curvature flows. In Appendix \ref{sec:slow.convergence} we discuss examples of flows with slow convergence which therefore are not meant to meet our characterization. In Appendix \ref{sec:mz.ode} we discuss an extension of an ODE lemma due to Merle--Zaag \cite{MerleZaag98} that we need. In Appendix \ref{sec:regularity} we discuss the form of Schauder estimates we need for our linear parabolic systems. 

Our study of ancient gradient flows requires a few ideas that are familiar to the experts of two neighboring fields:
\begin{itemize}
	\item the study of minimal surfaces with isolated singularities;
	\item the forward-time study of uniqueness of tangent flows for mean curvature flow at the first singular time.
\end{itemize}
Namely, we use the notion of integrable critical points for Proposition \ref{prop:intro.integrable} and the \L{}ojasiewicz--Simon inequality for Theorem \ref{theo:mcf.allard}. For context, see the pioneering works of Allard--Almgren \cite{AllardAlmgren81} and Simon \cite{Simon83}. The \L{}ojasiewicz--Simon inequality has found spectacular success in the study of singularities in mean curvature flow: novel variants were used by Schulze \cite{Schulze14}, Colding--Minicozzi \cite{ColdingMinicozzi15}, and Chodosh--Schulze \cite{ChodoshSchulze} to prove uniqueness of certain ``multiplicity one'' tangent flows. The ``dynamical'' study of singularities in the recent work of Colding--Minicozzi \cite{ColdingMinicozzi18a, ColdingMinicozzi18b} is also reminiscent of some aspects of Theorem \ref{theo:mcf.allard}. 

Theorem \ref{theo:intro.characterization} follows a different set of ideas. Key is a Cacciopoli type inequality, \eqref{eq:characterization.w12.l2}, which is deeply connected to the \emph{ancient} and the \emph{gradient} nature of the flow. For context, see Angenent--Daskalopoulos--Sesum \cite[Lemma 4.12] {AngenentDaskalopoulosSesum15}. The Caccioppoli inequality lets us estimate the $C^{2,\theta}$ decay of our flow in terms of its $L^2$ energy, which relates more naturally to the gradient nature of the flow. Indeed, we  decompose the $L^2$ norm into the stable, neutral, and unstable components, and directly study the dynamics of these components by building on an ODE result originally due to Merle--Zaag \cite{MerleZaag98}; see Lemma \ref{lemm:mz.ode}.

\subsection*{Acknowledgments}

KC was supported by the National Science Foundation under grant DMS-1811267. CM was supported by the National Science Foundational under grant DMS-1905165. We are grateful to F. C. Marques and J. Bernstein for suggesting to us that Corollary \ref{coro:mcf.s3} (which did not appear on the first version of the paper) follows from our proof of Theorem \ref{theo:mcf.allard} and the (now proven) Willmore conjecture. We are grateful to the journal's referee for their recommendations. We would like to thank  B. Choi for pointing out a point that had to be clarified, F. Schulze, O. Hershkovits, C. Mooney, and N. Edelen for insightful conversations, and T. Colding, B. Minicozzi, M. Langford, T. Bourni, M. Ivaki, Y. Sire, and A. Payne for their interest.


\section{Background and notation} \label{sec:notation}

\subsection{Functional spaces} 

Let $(\Sigma, g)$ be a closed Riemannian manifold and $V \to \Sigma$ be a Euclidean vector bundle. Let $\Omega \subset \Sigma \times \RR$, $\theta \in (0,1]$. For $u : \Omega \to V$ we define:
\[ [u]_{C^\theta_P(\Omega; V)} := \sup \left\{ \frac{d_V(u(p,t), u(q, s))}{d_\Sigma(p,q)^\theta + |t-s|^{\theta/2}} : (p, t), (q,s) \in \Omega, \; (p,t) \neq (q,s) \right\}, \]
and for $k \in \NN$:
\[ \Vert u \Vert_{C^{k,\theta}_P(\Omega; V)} := \sum_{i+2j \leq k} \sup_\Omega \Vert \nabla_x^i \nabla_t^j u \Vert + \sum_{i+2j=k} [\nabla_x^i \nabla_t^j u]_{C^\theta_P(\Omega; V)}. \]
The corresponding parabolic H\"older spaces are $C^{k,\theta}_P(\Omega; V)$. 

Now suppose $\Omega \subset \Sigma$. Without the subscript $P$, $[u]_{C^\theta(\Omega; V)}$, $\Vert u \Vert_{C^{k,\theta}(\Omega; V)}$ refer to the standard seminorm and norm of the Banach space $C^{k,\theta}(\Omega; V)$. 

Finally, when $\Omega = \Sigma \times \RR_-$, we will need to consider spaces of functions with controlled exponential decay. For $\delta > 0$, define
\begin{equation} \label{eq:exponential.decay.norm}
	\Vert u \Vert_{C^{k,\theta,\delta}_P(\Sigma \times \RR_-; V)} := \sup_{t \in \RR_-} \left[ e^{-\delta t} \Vert u \Vert_{C^{k,\theta}_P(\Sigma \times [t-1,t]; V)} \right].
\end{equation}
The vector space
\[ C^{k,\theta,\delta}_P(\Sigma \times \RR_-; V) := \{ u \in C^{k,\theta}_P(\Sigma \times \RR_-; V) : \Vert u \Vert_{C^{k,\theta,\delta}_P(\Sigma \times \RR_-; V)} < \infty \} \]
is evidently a Banach space when endowed with $\Vert \cdot \Vert_{C^{k,\theta,\delta}_P(\Sigma \times \RR_-; V)}$.

\subsection{Space of critical points} \label{sec:notation.critical.points}

We will be actively interested in the space of critical points
with small $C^{k,\theta}$ norm:
\begin{equation} \label{eq:c.k.alpha.ball.critical}
	\cM^{k,\theta}(\delta) := \{ f : \cH(f) = 0, \; \Vert f \Vert_{C^{k,\theta}(\Sigma; V)} < \delta \},
\end{equation}
particularly when $f=0$ is itself a critical point, which we will assume throughout this paper. From,  \eqref{eq:negative.gradient} we find that
\begin{equation} \label{eq:negative.gradient.divergence.form}
	\cH(f) = \divg_g \big[ \nabla_{q} A(x, f, \nabla_g f) \big] - \nabla_{z} A(x, f, \nabla_g f).
\end{equation}

We interpret $\cH(f) = 0$ as a weak second order divergence-form system as in \eqref{eq:negative.gradient.divergence.form}. Schauder theory for elliptic systems \cite{Simon97} implies that $\cM^{1,\theta}(\delta)$, $\theta \in (0, 1)$, already captures all solutions near the origin as long as one suitably adjusts $\delta$.

\begin{rema} \label{rema:critical.point.regularity}
	When $V$ is a line bundle, elliptic De Giorgi--Nash--Moser theory \cite[Chapter 8]{GilbargTrudinger01} allows us to use  $\cM^{1,0}(\delta')$ instead of $\cM^{1,\theta}(\delta)$.
\end{rema}

The linearization of $\cH(f)$ at $f=0$ will play an important role in our work, so let us define:
\begin{equation} \label{eq:jacobi.operator}
	Lf := \left[ \tfrac{d}{ds} \cH(sf) \right]_{s=0}.
\end{equation}
An elementary computation involving \eqref{eq:elliptic.functional.convexity}, \eqref{eq:negative.gradient.divergence.form} shows that
\[ Lf = \divg_g \big[ \langle \nabla^2_{q} A(x, 0, 0), \nabla_g f \rangle_g \big] + (\divg_g \nabla_{q} \nabla_{z} A(x, 0, 0) - \nabla_z^2 A(x, 0, 0)) f \]
is a uniformly elliptic self-adjoint divergence form operator. We will denote the eigenvalues of $-L$ as
\begin{equation} \label{eq:jacobi.operator.eigenvalues}
	\lambda_1 < \lambda_2 \leq \ldots \leq \lambda_I < \lambda_{I+1} = \ldots = \lambda_{I+K} = 0 < \lambda_{I+K+1} \leq \ldots,
\end{equation}
repeated according to their multiplicity; note that $\lim_j \lambda_j = \infty$ \cite[Chapter 5]{GilbargTrudinger01}. Here, $I = \ind(L)$ is the ``Morse index'' of $L$, and $K = \nul(L)$ is the ``nullity'' of $L$. We also fix once and for all an $L^2$ orthonormal sequence of corresponding eigenfunctions $\varphi_j : \Sigma \to V$:
\begin{itemize}
	\item $\varphi_1, \ldots, \varphi_I$ are called ``unstable modes'',
	\item $\varphi_{I+1}, \ldots, \varphi_{I+K}$ are called ``neutral modes'' or ``Jacobi fields'',
	\item $\varphi_{I+K+1}, \varphi_{I+K+2}, \ldots$ are called ``stable modes''.
\end{itemize}
We consider auxiliary operators:
\begin{equation} \label{eq:iota.minus}
	\iota_- : \RR^I \to L^2(\Sigma \times \RR_-; V), \; \iota_-(\bm{a}) := \sum_{j=1}^I a_j e^{-\lambda_j t} \varphi_j,
\end{equation}
\begin{multline} \label{eq:pi.minus}
	\Pi_- : L^2(\Sigma; V) \to L^2(\Sigma; V), \\
	\Pi_- \varphi := \iota_-(\langle \varphi, \varphi_1 \rangle_{L^2(\Sigma; V)}, \ldots, \langle \varphi, \varphi_I \rangle_{L^2(\Sigma; V)})(\cdot, 0),
\end{multline}
\begin{equation} \label{eq:iota.0}
	\iota_0 : \RR^K \to L^2(\Sigma; V), \; \iota_0(a_1, \ldots, a_K) := \sum_{\ell=1}^K a_\ell \varphi_{I+\ell},
\end{equation}
\begin{multline} \label{eq:pi.0}
	\Pi_0: L^2(\Sigma; V) \to L^2(\Sigma; V), \\
	\Pi_0 \varphi := \iota_0(\langle \varphi, \varphi_{I+1} \rangle_{L^2(\Sigma; V)}, \ldots, \langle \varphi, \varphi_{I+K} \rangle_{L^2(\Sigma; V)}).
\end{multline}

We now briefly recall the structure result for $\cM^{2,\theta}(\delta)$ in \cite[Section 2]{Simon83}. It is occasionally convenient to rewrite 
\begin{equation} \label{eq:critical.point.operator.with.errors}
\cH(f) = Lf + \langle \mathscr{N}(x, f, \nabla_g f), \nabla_g^2 f \rangle_{g} + \mathscr{Q}(x, f, \nabla_g f),
\end{equation} 
where $L$ is as above; $\mathscr{N}(x, z, q)$ is a smooth symmetric bilinear form mapping into $V$ satisfying
\begin{equation} \label{eq:critical.point.equation.error.i}
	(\Vert z \Vert + \Vert q \Vert)^{\min\{-1+j+k,0\}} \Vert \nabla_x^i \nabla_z^j \nabla_{q}^k \mathscr{N}(x, z, q) \Vert \leq c, \; i, j, j \geq 0;
\end{equation}
and $\mathscr{Q}(x, z, q)$ is a smooth $V$-valued function satisfying
\begin{equation} \label{eq:critical.point.equation.error.ii}
	(\Vert z \Vert + \Vert q \Vert)^{\min\{-2+j+k,0\}} \Vert \nabla_x^i \nabla_z^j \nabla_{q}^k \mathscr{Q}(x, z, q) \Vert \leq c, \; i, j, k \geq 0.
\end{equation}
Adding $\Pi_0$ from \eqref{eq:pi.0} to both sides of $\cH(f) = 0$, and recalling  \eqref{eq:critical.point.operator.with.errors}, the critical point equation is equivalent to
\begin{equation} \label{eq:critical.point.equation.with.projection}
	Lf + \langle \mathscr{N}(x, f, \nabla_g f), \nabla_g^2 f \rangle_{g} + \mathscr{Q}(x, f, \nabla_g f) + \Pi_0 f = \Pi_0 f.
\end{equation}
By the invertibility $L + \Pi_0$, the implicit function theorem on Banach spaces implies that there exist neighborhoods $W_1$, $W_2$ of $0$ in $C^{2,\theta}(\Sigma; V)$, $C^{0,\theta}(\Sigma; V)$, and a diffeomorphism $\Psi : W_2 \to W_1$ such that
\begin{align} 
	(L + \langle \mathscr{N}, \nabla_g^2 \rangle_g + \mathscr{Q} + \Pi_0) \circ \Psi = \operatorname{Id}_{W_2}, \label{eq:critical.point.implicit.i} \\
	\Psi \circ (L + \langle \mathscr{N}, \nabla_g^2 \rangle_g + \mathscr{Q} + \Pi_0) = \operatorname{Id}_{W_1}. \label{eq:critical.point.implicit.ii}
\end{align}
Set $U := \{ \bm{a} \in \RR^K : \iota_0(\bm{a}) \in W_2 \} \subset \RR^K$, and consider the finite dimensional reduction $\cA_{\operatorname{fin}} : U \to \RR$,
\[ \cA_{\operatorname{fin}}(\bm{a}) := \cA(\Psi(\iota_0(\bm{a}))). \]
For $f$ with $\Pi_0 f \in W_2$, \eqref{eq:critical.point.equation.with.projection} is equivalent to $f = (\Psi \circ \iota_0)(\bm{a})$ with $\bm{a} \in U$, $\nabla \cA_{\operatorname{fin}}(\bm{a}) = \bm{0}$. Shrinking $W_1$, we conclude that for small $\delta > 0$, 
\begin{equation} \label{eq:critical.point.space}
	\cM^{2,\theta}(\delta) = \{ (\Psi \circ \iota_0)(\bm{a}) : \bm{a} \in U, \; \nabla \cA_{\operatorname{fin}}(\bm{a}) = \bm{0} \}
\end{equation}
for some open neighborhood $U$ of $\bm{0} \in \RR^K$.  The same representation will also hold true for $\cM^{1,\theta}(\delta')$, for a smaller $\delta' > 0$, by elliptic Schauder theory \cite[Chapter 6]{GilbargTrudinger01}; see also Remark \ref{rema:critical.point.regularity}.


\section{Existence of ancient flows} \label{sec:existence}

In this section we construct ancient flows converging exponentially quickly to arbitrary unstable critical points along their unstable eigenspaces, in the spirit of an unstable manifold theorem. Results of this flavor are true for nonlinear parabolic PDEs in various settings (see \cite[Chapter 9]{Lunardi95}), but we opt for a short self-contained proof modeled on an elliptic result of Caffarelli--Hardt--Simon \cite{CaffarelliHardtSimon84}. As a side consequence of our contraction mapping technique we naturally get a uniqueness result within exponentially decaying flows, but we will sharpen this uniqueness substantially in Section \ref{sec:uniqueness}. 

We start by considering the inhomogeneous linear PDE
\begin{equation} \label{eq:linear.pde.inhomogeneous}
\tfrac{\partial}{\partial t} u = Lu + h(x, t), \; (x, t) \in \Sigma \times \RR_-,
\end{equation}
where $h : \Sigma \times \RR_- \to V$ is some given smooth function. It is well known that solutions of \eqref{eq:linear.pde.inhomogeneous} can be expressed as
\begin{equation} \label{eq:linear.fourier.series}
	u(x, t) = \sum_{j=1}^\infty u_j(t) \varphi_j(x),
\end{equation}
and the $u_j$ are, formally, solutions of $u_j'(t) = -\lambda_j u_j(t) + h_j(t)$, where $h(x, t) = \sum_{j=1}^\infty h_j(t) \varphi_j(x)$. 

\begin{lemm} \label{lemm:linear.existence.L2}
	Suppose that $\delta > 0$ is such that
	\[ \int_{-\infty}^0 \left| e^{-\delta t} \Vert h(\cdot, t) \Vert_{L^2(\Sigma; V)} \right|^2 \, dt < \infty. \]
	Fix $\bm{a} \in \RR^I$. There exists a unique solution $u$ of \eqref{eq:linear.pde.inhomogeneous} such that
	\[ \Pi_-(u(\cdot, 0)) = \iota_-(\bm{a})(\cdot, 0), \]
	\[ \int_{-\infty}^0 \left| e^{-\delta' t} \Vert u(\cdot, t) \Vert_{L^2(\Sigma; V)} \right|^2 \, dt < \infty \]
	for some $0 < \delta' < \min\{\delta,-\lambda_I\}$. It is given by the series in  \eqref{eq:linear.fourier.series} with 
	\begin{equation} \label{eq:linear.coefficient.ansatz.negative}
	u_j(t) := a_j e^{-\lambda_j t} - \int_t^0 e^{\lambda_j(s-t)} h_j(s) \, ds, \; j = 1, 2, \ldots, I,
	\end{equation}
	\begin{equation} \label{eq:linear.coefficient.ansatz.positive}
	u_j(t) := \int_{-\infty}^t e^{\lambda_j(s-t)} h_j(s) \, ds, \; j = I+1, I+2, \ldots.
	\end{equation}
	For every $t \leq 0$, and $0 < \delta' < \min\{\delta, -\lambda_I\}$,
	\begin{equation} \label{eq:linear.pde.bound.L2}
	e^{-\delta' t} \Vert u(\cdot, t) - \iota_-(\bm{a}) \Vert_{L^2(\Sigma; V)} \leq c \left[ \int_{-\infty}^0 \left| e^{-\delta \tau} \Vert h(\cdot, \tau) \Vert_{L^2(\Sigma; V)} \right|^2 \, d\tau \right]^{1/2},
	\end{equation}
	for some $c = c(\delta, \delta', \lambda_I) > 0$. 
\end{lemm}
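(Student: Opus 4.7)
The plan is to diagonalize \eqref{eq:linear.pde.inhomogeneous} in the $L^2(\Sigma;V)$-orthonormal eigenbasis $\{\varphi_j\}$ of $-L$, which reduces the PDE to the family of one-dimensional linear ODEs
\[ u_j'(t) = -\lambda_j u_j(t) + h_j(t), \quad j = 1, 2, \ldots, \]
with $u_j(t) := \langle u(\cdot,t), \varphi_j \rangle_{L^2(\Sigma;V)}$ and $h_j(t) := \langle h(\cdot,t), \varphi_j \rangle_{L^2(\Sigma;V)}$. Each ODE has a one-parameter family of solutions. For the unstable modes $j \leq I$ (with $\lambda_j < 0$), the free parameter is uniquely pinned down by the prescribed initial condition $u_j(0) = a_j$, yielding \eqref{eq:linear.coefficient.ansatz.negative}. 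For the neutral and stable modes $j \geq I+1$ (with $\lambda_j \geq 0$), it is pinned down by the required backward $L^2$-integrability: any other choice would add a pure exponential $C_j e^{-\lambda_j t}$ whose weighted $L^2$ norm $\int_{-\infty}^0 e^{-2\delta' t}|C_j e^{-\lambda_j t}|^2\,dt$ diverges at $-\infty$ since $\lambda_j + \delta' > 0$, forcing $C_j = 0$ and yielding \eqref{eq:linear.coefficient.ansatz.positive}.

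The main quantitative step is the weighted bound \eqref{eq:linear.pde.bound.L2}. Setting $v := u - \iota_-(\bm{a})$, Parseval gives $\Vert v(\cdot,t) \Vert_{L^2(\Sigma;V)}^2 = \sum_j |v_j(t)|^2$, so I would estimate each $v_j(t)$ from its integral formula by inserting $e^{\delta s} \cdot e^{-\delta s}$ into the integrand and applying Cauchy--Schwarz:
\[ |v_j(t)|^2 \leq \left( \int_J e^{2\lambda_j(s-t) + 2\delta s}\,ds \right)\left( \int_J e^{-2\delta s}|h_j(s)|^2\,ds \right), \]
with $J = [t,0]$ for $j \leq I$ and $J = (-\infty,t]$ for $j \geq I+1$. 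The first factor can be computed explicitly via the substitution $s = u + t$; after multiplication by $e^{-2\delta' t}$, the hypotheses $\delta' < \delta$ and $\delta' < -\lambda_I$ make it uniformly bounded for $t \leq 0$. For $j \geq I+1$ this uniform bound is at most $(2(\lambda_j + \delta))^{-1} \leq (2\delta)^{-1}$, so summing via Tonelli and Parseval on $h$ controls the tail by $(2\delta)^{-1}\int_{-\infty}^0 e^{-2\delta s}\Vert h(\cdot,s)\Vert_{L^2(\Sigma;V)}^2\,ds$. For $j \leq I$ only finitely many modes contribute, so their combined bound is harmless. Putting everything together yields \eqref{eq:linear.pde.bound.L2}.

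For uniqueness, I would subtract two solutions meeting the stated assumptions (with possibly distinct decay rates $\delta_1', \delta_2'$, replacing both by the smaller) to obtain a solution $w$ of the homogeneous equation $\partial_t w = Lw$ with $\Pi_-(w(\cdot,0)) = 0$ and finite weighted $L^2$ norm. Its Fourier modes are pure exponentials $w_j(t) = w_j(0) e^{-\lambda_j t}$. The vanishing $\Pi_-$-projection forces $w_j(0) = 0$ for $j \leq I$; for $j \geq I+1$, a nonzero $w_j(0)$ would give $\Vert w(\cdot,t)\Vert_{L^2(\Sigma;V)}^2 \geq |w_j(0)|^2 e^{-2\lambda_j t}$, whose weighted integral diverges at $-\infty$ by the same $\lambda_j + \delta' > 0$ mechanism. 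Hence $w \equiv 0$.

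The main obstacle I anticipate is the case analysis in the exponential integral from Cauchy--Schwarz: after substitution one gets $\int_0^{-t} e^{2(\lambda_j + \delta) u}\,du$, whose bound depends on the sign of $\lambda_j + \delta$, with the borderline value $\lambda_j + \delta = 0$ producing a spurious linear factor in $t$. Fortunately the borderline can only occur for finitely many unstable modes, where the offending $|t|$ is absorbed by the gap $e^{2(\delta - \delta') t} \to 0$ as $t \to -\infty$. Once these pointwise coefficient bounds are in hand, convergence of the series $\sum u_j \varphi_j$ and verification that it solves \eqref{eq:linear.pde.inhomogeneous} smoothly reduce to standard parabolic regularity using the smoothness of $h$.
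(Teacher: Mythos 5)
Your proof is correct and is exactly the ``straightforward computation'' the paper alludes to: diagonalize in the eigenbasis, verify the ansatz \eqref{eq:linear.coefficient.ansatz.negative}--\eqref{eq:linear.coefficient.ansatz.positive} solves the mode ODEs with the right boundary behavior, bound each mode via Cauchy--Schwarz with the weight $e^{\pm\delta s}$, sum using Parseval/Tonelli, and rule out homogeneous solutions by the same divergence-of-the-weighted-integral mechanism. Your handling of the borderline case $\lambda_j+\delta=0$ (absorbed by the gap $\delta-\delta'>0$, and occurring for only finitely many $j\le I$) is the one point that genuinely requires care, and you treat it correctly.
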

\begin{proof}
	This is a straightforward computation given \eqref{eq:linear.coefficient.ansatz.negative}, \eqref{eq:linear.coefficient.ansatz.positive}.
\end{proof}

Schauder theory for linear parabolic equations implies:

\begin{coro} \label{coro:linear.existence.holder}
	Suppose that $h \in  C^{0,\theta,\delta}_P(\Sigma \times \RR_-)$ for $\theta \in (0, 1)$, $\delta > 0$. The solution in Lemma \ref{lemm:linear.existence.L2} satisfies, for every $0 < \delta'' < \min\{\delta, -\lambda_I\}$,
	\begin{equation} \label{eq:linear.pde.bound.holder}
		\Vert u - \iota_-(\bm{a})  \Vert_{C^{2,\theta,\delta''}_P(\Sigma \times \RR_-; V)} \leq c \Vert h \Vert_{C^{0,\theta,\delta}_P(\Sigma \times \RR_-; V)},
	\end{equation}
	for some $c = c(\theta, \delta, \delta'', \lambda_I) > 0$.
\end{coro}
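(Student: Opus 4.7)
My plan is to bootstrap the pointwise-in-time $L^2$ bound from Lemma \ref{lemm:linear.existence.L2} into the weighted parabolic $C^{2,\theta}$ bound by applying interior parabolic Schauder estimates on short cylinders, and then to convert pointwise growth bounds to norms of the weighted spaces by a careful choice of intermediate exponents.

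First, I would observe that $\iota_-(\bm{a}) = \sum_{j=1}^I a_j e^{-\lambda_j t}\varphi_j$ solves the homogeneous equation $\partial_t \iota_-(\bm{a}) = L\iota_-(\bm{a})$, since $L\varphi_j = -\lambda_j \varphi_j$. Consequently $v := u - \iota_-(\bm{a})$ satisfies the same inhomogeneous PDE $\partial_t v = Lv + h$ on $\Sigma\times\RR_-$. Next, pick intermediate exponents
\[
\delta'' < \delta' < \tilde\delta < \min\{\delta, -\lambda_I\},
\]
which exist under the hypothesis $\delta'' < \min\{\delta, -\lambda_I\}$. Since $\|h(\cdot,\tau)\|_{L^2(\Sigma;V)} \leq C e^{\delta\tau} \|h\|_{C^{0,\theta,\delta}_P}$ and $\tilde\delta < \delta$, the integrability hypothesis of Lemma \ref{lemm:linear.existence.L2} holds with parameter $\tilde\delta$, and $\int_{-\infty}^0 |e^{-\tilde\delta \tau}\|h(\cdot,\tau)\|_{L^2}|^2 d\tau \leq C(\tilde\delta,\delta)\,\|h\|_{C^{0,\theta,\delta}_P}^2$. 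Applying Lemma \ref{lemm:linear.existence.L2} with input parameter $\tilde\delta$ and decay rate $\delta'$ yields the pointwise bound
\[
\|v(\cdot,t)\|_{L^2(\Sigma;V)} \leq C\, e^{\delta' t}\, \|h\|_{C^{0,\theta,\delta}_P(\Sigma\times\RR_-;V)},\qquad t\leq 0.
\]

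The second step is to invoke interior parabolic Schauder estimates for the linear system $\partial_t v - Lv = h$, in the form established in Appendix \ref{sec:regularity}, applied to the cylinder $\Sigma\times[t-1,t]$ with data drawn from the enlarged cylinder $\Sigma\times[t-2,t]$:
\[
\|v\|_{C^{2,\theta}_P(\Sigma\times[t-1,t];V)} \leq C\Bigl(\|v\|_{L^2(\Sigma\times[t-2,t];V)} + \|h\|_{C^{0,\theta}_P(\Sigma\times[t-2,t];V)}\Bigr).
\]
The $L^2$ term is controlled by integrating the pointwise bound above, giving $\|v\|_{L^2(\Sigma\times[t-2,t];V)} \leq C e^{\delta' t}\|h\|_{C^{0,\theta,\delta}_P}$. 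The Hölder term is controlled directly from the definition \eqref{eq:exponential.decay.norm}: since $[t-2,t]$ is covered by two unit intervals of the form $[s-1,s]$, one obtains $\|h\|_{C^{0,\theta}_P(\Sigma\times[t-2,t];V)} \leq 2 e^{\delta t}\|h\|_{C^{0,\theta,\delta}_P}$. Using $\delta' < \delta$ and $t\leq 0$, so that $e^{\delta t}\leq e^{\delta' t}$, both terms combine into
\[
\|v\|_{C^{2,\theta}_P(\Sigma\times[t-1,t];V)} \leq C\, e^{\delta' t}\, \|h\|_{C^{0,\theta,\delta}_P(\Sigma\times\RR_-;V)}.
\]

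Finally, multiplying by $e^{-\delta'' t}$ and using $\delta'' < \delta'$ together with $t\leq 0$ gives $e^{(\delta' - \delta'')t} \leq 1$, so the quantity $e^{-\delta'' t}\|v\|_{C^{2,\theta}_P(\Sigma\times[t-1,t];V)}$ is uniformly bounded in $t\leq 0$ by $C\|h\|_{C^{0,\theta,\delta}_P}$. Taking the supremum over $t\leq 0$ yields \eqref{eq:linear.pde.bound.holder}. There is no genuine conceptual difficulty here beyond carefully tracking the hierarchy of exponents $\delta'' < \delta' < \tilde\delta < \min\{\delta,-\lambda_I\}$ so that both the $L^2$ hypothesis of Lemma \ref{lemm:linear.existence.L2} and the final exponential weight match up; the only external input is the interior parabolic Schauder estimate for systems with coefficients as in $L$, which is precisely what Appendix \ref{sec:regularity} is set up to provide.
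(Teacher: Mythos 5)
Your proof is correct and follows the same route the paper intends: the paper's own argument is just the one-line invocation ``Schauder theory for linear parabolic equations implies [the statement],'' and you have supplied exactly the details that one-line invocation compresses --- passing the pointwise $L^2$ decay from Lemma~\ref{lemm:linear.existence.L2} through the interior $C^{2,\theta}$--$C^{\theta}$--$L^q$ Schauder estimate of Theorem~\ref{theo:regularity.schauder.lq} on translated unit cylinders, and then threading the exponential weights. (One very minor simplification: the chain $\delta''<\delta'<\tilde\delta$ can be shortened by taking $\delta'=\delta''$ directly, since the Schauder step already only needs the weaker weight $e^{\delta'' t}$; but your version is equally valid.)
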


We now turn to the construction of solutions $u : \Sigma \times \RR_- \to V$ to:
\begin{equation} \label{eq:existence.nonlinear}
	\tfrac{\partial}{\partial t} u = Lu + \langle \mathscr{N}(x, u, \nabla_g u), \nabla^2_g u \rangle_g + \mathscr{Q}(x, u, \nabla_g u),
\end{equation}
where $\mathscr{N}$, $\mathscr{Q}$ are as in \eqref{eq:critical.point.equation.error.i}, \eqref{eq:critical.point.equation.error.ii}. Note that ancient solutions of \eqref{eq:negative.gradient.flow} are precisely of this form because of  \eqref{eq:critical.point.operator.with.errors}.

\begin{theo} \label{theo:existence}
	Fix $\delta_0 \in (0, -\lambda_I)$.  There is a $\mu_0 > 0$ such that for any $\mu \geq \mu_0$, $\bm{a} \in B_\eta(\bm{0}) \subset \RR^I$, with $\eta$ depending on $\mu$, there is a unique solution  $\mathscr{S}(\bm{a}) : \Sigma \times \RR_- \to V$ of \eqref{eq:existence.nonlinear} satisfying 
	\begin{equation} \label{eq:nondegenerate.solution.operator}
		\Vert \mathscr{S}(\bm{a}) - \iota_-(\bm{a}) \Vert_{C^{2,\theta,\delta_0}_P(\Sigma \times \RR_-; V)} \leq \mu |\bm{a}|^2, \; \Pi_- \big[ \mathscr{S}(\bm{a})(\cdot, 0) \big] = \iota_-(\bm{a})(\cdot, 0).
	\end{equation}
\end{theo}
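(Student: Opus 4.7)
The plan is a contraction mapping argument in the Banach space $\mathscr{X} := C^{2,\theta,\delta_0}_P(\Sigma \times \RR_-; V)$. Since $\iota_-(\bm{a})$ already solves the homogeneous linear equation $\partial_t f = Lf$, I would look for $\mathscr{S}(\bm{a})$ in the form $\iota_-(\bm{a}) + v$ with $v$ a fixed point of an operator $\mathscr{T}_{\bm{a}}$ defined below, inside the closed ball $\overline{B_\rho} \subset \mathscr{X}$ of radius $\rho := \mu |\bm{a}|^2$. Note that the choice $\delta_0 < -\lambda_I$ forces $\Vert \iota_-(\bm{a}) \Vert_{\mathscr{X}} \leq C |\bm{a}|$, so $u := \iota_-(\bm{a}) + v$ has small parabolic $C^{2,\theta}$ norm on every unit cylinder once $|\bm{a}|$ and $\Vert v \Vert_{\mathscr{X}}$ are small.

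Given $v \in \overline{B_\rho}$, set $u := \iota_-(\bm{a}) + v$ and
\[ h(u) := \langle \mathscr{N}(x, u, \nabla_g u), \nabla_g^2 u \rangle_g + \mathscr{Q}(x, u, \nabla_g u). \]
The error bounds \eqref{eq:critical.point.equation.error.i}, \eqref{eq:critical.point.equation.error.ii} encode that $h$ vanishes \emph{quadratically} at $u = 0$; this is the crucial structural input. Pointwise Taylor estimation of $h$ combined with the Leibniz rule on the unit parabolic cylinders appearing in \eqref{eq:exponential.decay.norm} yields
\begin{align*}
\Vert h(u) \Vert_{C^{0,\theta,2\delta_0}_P} &\leq C_1 (|\bm{a}| + \Vert v \Vert_{\mathscr{X}})^2, \\
\Vert h(u_1) - h(u_2) \Vert_{C^{0,\theta,2\delta_0}_P} &\leq C_1 (|\bm{a}| + \Vert v_1 \Vert_{\mathscr{X}} + \Vert v_2 \Vert_{\mathscr{X}}) \Vert v_1 - v_2 \Vert_{\mathscr{X}},
\end{align*}
with $u_i := \iota_-(\bm{a}) + v_i$. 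Pick any $\delta'' \in (\delta_0, \min\{2\delta_0, -\lambda_I\})$, which is a nonempty interval because $\delta_0 \in (0, -\lambda_I)$. Applying Corollary \ref{coro:linear.existence.holder} with inhomogeneity $h(u)$ and with negative-mode parameter equal to $\bm{0}$ then produces a unique $w \in C^{2,\theta,\delta''}_P \subset \mathscr{X}$ solving $\partial_t w = L w + h(u)$ with $\Pi_-(w(\cdot, 0)) = \bm{0}$, and satisfying $\Vert w \Vert_{\mathscr{X}} \leq c \Vert h(u) \Vert_{C^{0,\theta,2\delta_0}_P}$. Set $\mathscr{T}_{\bm{a}}(v) := w$. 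Any fixed point $v^*$ of $\mathscr{T}_{\bm{a}}$ yields $\mathscr{S}(\bm{a}) := \iota_-(\bm{a}) + v^*$ solving \eqref{eq:existence.nonlinear} and meeting the projection condition in \eqref{eq:nondegenerate.solution.operator}, because $\iota_-(\bm{a})$ is annihilated by $\partial_t - L$ and $\Pi_-(\iota_-(\bm{a})(\cdot, 0)) = \iota_-(\bm{a})(\cdot, 0)$.

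Setting $\mu_0 := 4 c C_1$ and shrinking $\eta = \eta(\mu)$ so that both $\mu |\bm{a}|^2 \leq |\bm{a}|$ and $2 c C_1(|\bm{a}| + \mu|\bm{a}|^2) \leq \tfrac12$ for $\bm{a} \in B_\eta(\bm{0})$, the two displayed estimates above combined with the bound on $\Vert w \Vert_{\mathscr{X}}$ give $\Vert \mathscr{T}_{\bm{a}}(v) \Vert_{\mathscr{X}} \leq 4 c C_1 |\bm{a}|^2 = \mu |\bm{a}|^2$ on $\overline{B_\rho}$, along with a $\tfrac12$-Lipschitz contraction bound. The Banach fixed point theorem then produces the desired unique $v^* \in \overline{B_\rho}$, hence a unique $\mathscr{S}(\bm{a})$ satisfying \eqref{eq:nondegenerate.solution.operator}. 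The main subtlety — really the only place the hypotheses are used with care — is the doubling of the decay exponent from $\delta_0$ to $2\delta_0$ when composing with $h$: this doubling is exactly what lets Corollary \ref{coro:linear.existence.holder}, which can only output decay up to $\min\{2\delta_0, -\lambda_I\}$, return at least $\delta_0$-decay and close the fixed-point loop.
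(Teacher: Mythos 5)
Your proposal is correct and follows essentially the same argument as the paper's proof: both iterate a linear solution operator (via Corollary \ref{coro:linear.existence.holder} applied with decay parameter $2\delta_0$) on a ball of radius $\mu|\bm{a}|^2$, using the quadratic vanishing of the nonlinearity encoded in \eqref{eq:critical.point.equation.error.i}--\eqref{eq:critical.point.equation.error.ii}. The only difference is cosmetic — you iterate on the correction $v = u - \iota_-(\bm{a})$ with a centered-ball constraint $\Pi_-(v(\cdot,0)) = \bm{0}$, whereas the paper iterates on $u$ itself inside the affine subspace $\cC[\bm{a}]$ with the constraint $\Pi_-(u(\cdot,0)) = \iota_-(\bm{a})(\cdot,0)$; the two iteration operators are conjugate by the translation by $\iota_-(\bm{a})$, which solves the homogeneous equation, and both are applications of Corollary \ref{coro:linear.existence.holder}.
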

\begin{proof}
	The space
	\[ \cC(\bm{a}) := \{ u \in C^{2,\theta,\delta_0}_P(\Sigma \times \RR_-; V) : \Pi_- (u(\cdot, 0)) = \iota_-(\bm{a})(\cdot, 0) \} \} \]
	is a closed subspace of $C^{2,\theta,\delta_0}_P(\Sigma \times \RR_-; V)$, so it is also a Banach space. For brevity, we will write $\Vert \cdot \Vert$ for $\Vert \cdot \Vert_{C^{2,\theta,\delta_0}_P(\Sigma \times \RR_-; V)}$ in this proof.
	
	For $u \in \cC[\bm{a}] \cap C^\infty(\Sigma \times \RR_-; V)$, define $\mathscr{S}(u; \bm{a})$ to be a solution in $\cC[\bm{a}]$ of
	\[ (\tfrac{\partial}{\partial t} - L) \mathscr{S}(u; \bm{a}) = \langle \mathscr{N}(x, u, \nabla_g u), \nabla_g^2 u \rangle_g + \mathscr{Q}(x, u, \nabla_g u), \; t \leq 0. \]
	Existence and uniqueness hold by Corollary \ref{coro:linear.existence.holder}, which applies with $\delta = 2 \delta_0$ in view of \eqref{eq:critical.point.equation.error.i}-\eqref{eq:critical.point.equation.error.ii} and shows that, for some $c > 0$,
	\begin{align} 
	\Vert \mathscr{S}(u;\bm{a}) - \iota_-(\bm{a}) \Vert & \leq c \Vert u \Vert^2, \label{eq:existence.estimate.i} \\
	\Vert \mathscr{S}(u;\bm{a}) - \mathscr{S}(u'; \bm{a}) \Vert & \leq c (\Vert u \Vert + \Vert u' \Vert) \Vert u - u' \Vert, \label{eq:existence.estimate.ii}
	\end{align}
	By \eqref{eq:existence.estimate.ii}, $\mathscr{S}(\cdot; \bm{a})$ extends to a  $C^1$ map of $\cC[\bm{a}]$. By \eqref{eq:existence.estimate.i},   \eqref{eq:existence.estimate.ii}, $\mathscr{S}(\cdot; \bm{a})$ is a contraction mapping of the convex subset $\{ u \in \cC[\bm{a}] : \Vert u - \iota_-(\bm{a}) \Vert \leq \mu |\bm{a}|^2 \}$, provided $\mu > 2c$ and $\eta$ is small depending on $\mu$, $c$. The result follows from the contraction mapping principle.
\end{proof}

As an immediate corollary of \eqref{eq:existence.estimate.i}-\eqref{eq:existence.estimate.ii} we get:

\begin{coro} \label{coro:existence}
	The mapping $\mathscr{S} : B_\eta(\bm{0}) \to C^{2,\theta,\delta_0}_P(\Sigma \times \RR_-; V)$ of Theorem \ref{theo:existence} satisfies  $\mathscr{S}(\bm{0}) = 0$ and $\left[ \tfrac{d}{ds} \mathscr{S}(s\bm{a}) \right]_{s=0} = \iota_-(\bm{a})$, $\forall \bm{a} \in \RR^I$.
\end{coro}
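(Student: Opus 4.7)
The plan is to read off both claims directly from the defining property \eqref{eq:nondegenerate.solution.operator} of $\mathscr{S}$, using only the linearity of $\iota_-$ (which is manifest from \eqref{eq:iota.minus}) and the quadratic-in-$|\bm{a}|$ error bound.

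First I would handle $\mathscr{S}(\bm{0}) = 0$. Taking $\bm{a} = \bm{0}$ in \eqref{eq:nondegenerate.solution.operator} gives $\iota_-(\bm{0}) = 0$ and $\Vert \mathscr{S}(\bm{0}) \Vert_{C^{2,\theta,\delta_0}_P(\Sigma \times \RR_-; V)} \leq \mu \cdot 0 = 0$; since the constant function $0$ is itself a solution with the required projection, uniqueness from Theorem \ref{theo:existence} forces $\mathscr{S}(\bm{0})=0$.

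Next I would compute the Gateaux derivative. For any fixed $\bm{a} \in \RR^I$ and all sufficiently small $s \in \RR$ so that $s \bm{a} \in B_\eta(\bm{0})$, applying \eqref{eq:nondegenerate.solution.operator} to $s \bm{a}$ in place of $\bm{a}$ yields
\begin{equation*}
\Vert \mathscr{S}(s\bm{a}) - \iota_-(s\bm{a}) \Vert_{C^{2,\theta,\delta_0}_P(\Sigma \times \RR_-; V)} \leq \mu s^2 |\bm{a}|^2.
\end{equation*}
Linearity of $\iota_-$ gives $\iota_-(s\bm{a}) = s\,\iota_-(\bm{a})$, so dividing through by $s \neq 0$ produces
\begin{equation*}
\Big\Vert \tfrac{\mathscr{S}(s\bm{a}) - \mathscr{S}(\bm{0})}{s} - \iota_-(\bm{a}) \Big\Vert_{C^{2,\theta,\delta_0}_P(\Sigma \times \RR_-; V)} \leq \mu |s|\, |\bm{a}|^2 \xrightarrow[s\to 0]{} 0,
\end{equation*}
which is the desired identity $\bigl[\tfrac{d}{ds}\mathscr{S}(s\bm{a})\bigr]_{s=0} = \iota_-(\bm{a})$ in the $C^{2,\theta,\delta_0}_P$ norm.

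There is essentially no obstacle here: the corollary is a purely formal consequence of the contraction mapping estimates \eqref{eq:existence.estimate.i}-\eqref{eq:existence.estimate.ii} already used in the proof of Theorem \ref{theo:existence}, encoded in the bound \eqref{eq:nondegenerate.solution.operator}. The only point requiring attention is to check $s\bm{a} \in B_\eta(\bm{0})$ for $|s|$ small, which is automatic, and to note that the quadratic error $\mu |s\bm{a}|^2$ vanishes to higher order than the linear term $\iota_-(s\bm{a})$.
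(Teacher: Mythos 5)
Your argument is correct and is essentially the paper's own: the authors state that Corollary~\ref{coro:existence} follows immediately from the contraction estimates \eqref{eq:existence.estimate.i}--\eqref{eq:existence.estimate.ii}, and your derivation reads off both conclusions from the bound \eqref{eq:nondegenerate.solution.operator}, which encodes precisely those estimates, together with the linearity of $\iota_-$.
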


In other words, $\mathscr{S}$ can be viewed as parametrizing the ``unstable manifold'' that corresponds to the critical point at the origin, which is tangent to the subspace of eigenfunctions of $L$ with negative eigenvalues. It is not hard to see that $\mathscr{S}$ is a smooth Banach functional.


\section{Uniqueness} \label{sec:uniqueness}

Our main theorem in this section is:

\begin{theo} \label{theo:characterization}
	Fix $\theta \in (0, 1)$, $C_0 > 0$. There exists $\eps > 0$ such that if $u : \Sigma \times \RR_- \to V$ is a smooth solution of \eqref{eq:negative.gradient.flow} satisfying
	\begin{equation} \label{eq:characterization.assumption.c1theta}
		\Vert u \Vert_{C^{1,\theta}_P(\Sigma \times \RR_-; V)} \leq C_0,
	\end{equation}
	\begin{equation} \label{eq:characterization.assumption.small}
		\Vert u(\cdot, t) \Vert_{C^1(\Sigma; V)} < \eps, \; \forall t \leq 0,
	\end{equation}
	and
	\begin{equation} \label{eq:characterization.assumption.L1}
		\Vert u \Vert_{L^1(\Sigma \times \RR_-; V)} < \infty,
	\end{equation}
	then there exists $\tau \geq 0$ and $\bm{a} \in \RR^I$ such that $u(x, t - \tau)$ coincides with $\mathscr{S}(\bm{a})$ from Theorem \ref{theo:existence}.
\end{theo}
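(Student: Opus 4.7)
The plan is to Fourier-decompose $u$ along the $L^2$-orthonormal eigenbasis $(\varphi_j)$ of $-L$ from \eqref{eq:jacobi.operator.eigenvalues} and separately track the $L^2$-energies of the three spectral blocks,
\[ U(t) := \sum_{j=1}^I u_j(t)^2, \qquad N(t) := \sum_{j=I+1}^{I+K} u_j(t)^2, \qquad S(t) := \sum_{j > I+K} u_j(t)^2, \]
where $u_j(t) := \iprod{u(\cdot,t)}{\varphi_j}_{L^2(\Sigma; V)}$. Since $0$ is a critical point of $\cA$, the flow $u$ satisfies \eqref{eq:existence.nonlinear}, whose nonlinear remainder is, by \eqref{eq:critical.point.equation.error.i}--\eqref{eq:critical.point.equation.error.ii} combined with \eqref{eq:characterization.assumption.small}, bounded pointwise by $C\eps\,(|u|+|\nabla u|+|\nabla^2 u|)(|u|+|\nabla u|)$.

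\textbf{ODE trichotomy and exponential decay.} Testing \eqref{eq:existence.nonlinear} against each $\varphi_j$ and summing within each spectral block should produce approximate differential inequalities
\begin{align*}
|U'(t) + 2\lambda_I\, U(t)| & \leq C\eps\,(U+N+S)(t) + R(t), \\
|N'(t)| & \leq C\eps\,(U+N+S)(t) + R(t), \\
|S'(t) + 2\lambda_{I+K+1}\, S(t)| & \leq C\eps\,(U+N+S)(t) + R(t),
\end{align*}
where the remainder $R(t)$ contains terms involving $\Vert u\Vert_{H^1}$ and $\Vert u\Vert_{H^2}$. A Caccioppoli-type inequality---valid precisely because the flow is \emph{ancient} (permitting backward time integration) and of \emph{gradient} type (yielding $\tfrac{d}{dt}\cA(u) = -\Vert\cH(u)\Vert_{L^2}^2$)---absorbs $R(t)$ into a constant multiple of $U+N+S$ averaged over a slightly enlarged slab. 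The Merle--Zaag trichotomy (Lemma \ref{lemm:mz.ode}) then forces exactly one of $U$, $N$, $S$ to dominate as $t \to -\infty$. The stable-dominant case drives $S$ to grow exponentially backward, contradicting \eqref{eq:characterization.assumption.small}; the neutral-dominant case bounds $N$, and hence $\Vert u(\cdot,t)\Vert_{L^2}$, below by a positive constant, contradicting \eqref{eq:characterization.assumption.L1}. Hence $U$ dominates, yielding $\Vert u(\cdot,t)\Vert_{L^2} \leq C e^{(|\lambda_I|-\sigma) t}$ for any small $\sigma > 0$. Parabolic Schauder estimates (Appendix \ref{sec:regularity}) on unit-length slabs $\Sigma \times [t-1, t]$ upgrade this to $C^{2,\theta}_P$ decay at essentially the same exponential rate.

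\textbf{Matching with the unstable manifold.} Fix $\delta_0 \in (0, -\lambda_I)$ strictly below the rate just established. For $\tau \geq 0$ sufficiently large, set $v(x,t) := u(x, t-\tau)$, so that $\Vert v\Vert_{C^{2,\theta,\delta_0}_P(\Sigma\times\RR_-;V)}$ can be made arbitrarily small, and let $\bm{a} := \bigl(\iprod{v(\cdot,0)}{\varphi_j}_{L^2}\bigr)_{j=1}^I \in \RR^I$, so that $\Pi_-(v(\cdot, 0)) = \iota_-(\bm{a})(\cdot, 0)$ and $|\bm{a}|$ is small. Then $w := v - \iota_-(\bm{a})$ satisfies $(\partial_t - L)w = F(v)$, with $F(v) := \iprod{\mathscr{N}(x, v, \nabla_g v)}{\nabla_g^2 v}_g + \mathscr{Q}(x, v, \nabla_g v)$, and $w$ has vanishing unstable projection at $t=0$. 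Its right-hand side has size $O(\Vert v\Vert^2) = O(|\bm{a}|^2)$ by the Fourier representation of the neutral and stable components, which are driven entirely by the nonlinear forcing. Invoking Corollary \ref{coro:linear.existence.holder} then yields $\Vert w\Vert_{C^{2,\theta,\delta_0}_P} \leq \mu\,|\bm{a}|^2$, which places $v$ in the convex set where $\mathscr{S}(\bm{a})$ was uniquely characterized in Theorem \ref{theo:existence}. Therefore $v \equiv \mathscr{S}(\bm{a})$.

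\textbf{Main obstacle.} The principal difficulty is closing the ODE analysis in the presence of the quadratic nonlinearity $\iprod{\mathscr{N}}{\nabla_g^2 u}_g$, which carries an \emph{a priori} uncontrolled $H^2$ term. This forces the use of the Caccioppoli-type bound controlling $\Vert\nabla u\Vert_{L^2}$ and $\Vert\nabla^2 u\Vert_{L^2}$ on $\Sigma\times[t,t+1]$ by $\Vert u\Vert_{L^2}$ on a slightly larger window; its proof hinges on both the gradient-flow identity and the ancient time domain. A secondary subtlety is the matching step: verifying the norm bound $\Vert v - \iota_-(\bm{a})\Vert_{C^{2,\theta,\delta_0}_P} \leq \mu|\bm{a}|^2$ in the correct \emph{quadratic} form, rather than the merely linear bound one would get from $\Vert v\Vert \lesssim |\bm{a}|$, requires the Duhamel-style treatment of the neutral and stable components sketched above.
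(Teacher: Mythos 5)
Your overall architecture matches the paper's: decompose along the spectrum of $L$, derive coupled ODE inequalities whose remainder is controlled by a gradient-flow Caccioppoli estimate of the form \eqref{eq:characterization.w12.l2}, apply the Merle--Zaag lemma, rule out neutral dominance by the $L^1$ assumption, obtain exponential $L^2$ decay, upgrade to $C^{2,\theta}_P$ by Schauder, and match with $\mathscr{S}(\bm{a})$ via the uniqueness clause of Theorem \ref{theo:existence}. This is the right chain of ideas. There is, however, a genuine gap at the matching step that your sketch does not close and that the paper handles with an extra inductive stage.

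The gap is the mismatch between the decay rate you establish and the size of $\bm{a}$. You conclude decay $\Vert u(\cdot,t)\Vert_{L^2}\leq Ce^{(-\lambda_I-\sigma)t}$ for small $\sigma$, i.e. at rate essentially $-\lambda_I$. But the vector $\bm{a}=\bigl(\iprod{v(\cdot,0)}{\varphi_j}\bigr)_{j\leq I}$ need not be of that size: if the flow is asymptotically dominated by a deeper unstable mode $\varphi_{I^*}$ with $\lambda_{I^*}<\lambda_I$ (which is perfectly consistent with your estimates), then $|\bm{a}|\sim e^{\lambda_{I^*}\tau}$, while your crude upper bound on $\Vert v\Vert_{C^{2,\theta,\delta_0}_P}$ is only $\lesssim e^{(\lambda_I+\sigma)\tau}$, which is much larger. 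Consequently the assertion $\Vert v\Vert=O(|\bm{a}|)$, and hence the quadratic bound $\Vert v-\iota_-(\bm{a})\Vert_{C^{2,\theta,\delta_0}_P}\leq\mu|\bm{a}|^2$, does not follow from what you have shown, and Theorem \ref{theo:existence}'s uniqueness cannot be invoked. The "Duhamel-style treatment" you gesture at will not rescue this: without knowing that $|\bm{a}|$ is comparable to $\Vert v\Vert$, the error picks up cross-terms $\sim e^{(\lambda_I+\lambda_{I^*})\tau}$ which beat $e^{2\lambda_{I^*}\tau}\sim|\bm{a}|^2$ whenever $\lambda_{I^*}<\lambda_I$. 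The paper closes this by a further backward induction over the negative spectrum (Claim \ref{clai:characterization.peel.irrelevant.terms}), identifying the largest unstable index $I^*$ whose mode does \emph{not} vanish when renormalized by $e^{-\lambda_{I^*}t}$, and then re-running the Merle--Zaag argument within the unstable block to establish the sharp rate $\sigma(t)\lesssim e^{-\lambda_{I^*}t}$ together with the lower bound $\limsup_{\tau\to\infty}e^{-\lambda_{I^*}\tau}\Vert\Pi_-(u(\cdot,-\tau))\Vert_{L^2}>0$. Only these two facts, taken together, yield $\Vert v-\iota_-(\bm{a})\Vert\lesssim e^{2\lambda_{I^*}\tau}\lesssim|\bm{a}|^2$ for a suitable sequence of $\tau$'s. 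You also slightly misdescribe the Merle--Zaag dichotomy: the lemma always gives $y\lesssim\eps(x+z)$, i.e. the stable block is dominated from the start, so there is no separate "stable-dominant" branch to rule out; the actual alternative is \eqref{eq:mz.ode.A} versus \eqref{eq:mz.ode.B}, and it is only \eqref{eq:mz.ode.A} that the $L^1$ hypothesis excludes.
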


Let us spend a moment to contrast Theorem \ref{theo:characterization} to generic center-unstable manifold type results; see \cite[Chapter 9.2]{Lunardi95}, \cite{DaPratoLunardi88}. The latter ascertain that ancient flows near critical points decompose into a ``slow'' neutral component and a ``fast'' unstable component (exponentially decaying with speed $e^{\alpha t}$, $\alpha < -\lambda_I$). We prove a finer result by exploiting the quasilinear gradient flow structure. Iterating a lemma of Merle--Zaag (Appendix \ref{sec:mz.ode}), we study the dynamics across all eigenspace projections. We show that an ancient flow that converges to a critical point with at least an ``$L^1$'' rate must a posteriori be fully dominated by one of its ``fast'' unstable modes $\lambda_{I^*}$, with $I^* \leq I$. We obtain sharp decay rates that are sensitive to the dominating unstable eigenvalue (i.e., $e^{-\lambda_{I^*} t}$). This lets us invoke the contraction mapping from Section \ref{sec:existence} with much weaker a priori conditions. 

\begin{rema} \label{rema:characterization.equation}
	Recall, from \eqref{eq:critical.point.operator.with.errors}, that solutions $u$ of \eqref{eq:negative.gradient.flow} satisfy an evolution equation of the form
	\begin{equation} \label{eq:characterization.nonlinear}
		\tfrac{\partial}{\partial t} u = Lu + \langle \mathscr{N}(x, u, \nabla_g u), \nabla_g^2 u \rangle_g + \mathscr{Q}(x, u, \nabla_g u).
	\end{equation}
	For reasons that will become clearer later, we will seek to resort to \eqref{eq:characterization.nonlinear} instead of the variational equation  \eqref{eq:negative.gradient.flow} whenever possible.
\end{rema}

\begin{rema} \label{rema:characterization.regularity}
	When $V$ is a line bundle, all $C^{1,\theta}_P$ norms in this section can be replaced by spatial $C^1$ norms due to fully nonlinear parabolic PDE theory \cite{Wang92}.
\end{rema}

\begin{proof}[Proof of Theorem \ref{theo:characterization}]	
	It will be convenient to write $\lesssim$ for inequalities that hold up to multiplicative constants that may depend on $\cA$, $\Sigma$, $g$, $V$, $\theta$, $C_0$. Denote:
	\[ \sigma(t) := \Vert u(\cdot, t) \Vert_{C^2(\Sigma; V)}. \]
	Linear parabolic Schauder theory, \eqref{eq:characterization.assumption.c1theta},  \eqref{eq:characterization.assumption.small},  \eqref{eq:characterization.assumption.L1} imply 
	\begin{equation} \label{eq:characterization.initial.decay}
		\sigma(t) \lesssim \eps, \; \sigma \in L^1(\RR_-).
	\end{equation}
	For sufficiently small $\eps > 0$, the negative gradient flow nature of  \eqref{eq:negative.gradient.flow}, the criticality of $0$, the Legendre--Hadamard condition \eqref{eq:elliptic.functional.convexity} and the uniform $C^2$ control in \eqref{eq:characterization.initial.decay} imply, by G\aa{}rding's inequality, that
	\[ \cA(0) \geq \cA(u(\cdot, t)) \geq \cA(0) + C^{-1} \Vert \nabla_g u(\cdot, t) \Vert_{L^2(\Sigma; V)}^2 - C^{-1} \Vert u(\cdot, t) \Vert_{L^2(\Sigma; V)}^2 \]
	for a fixed $C > 0$, so
	\begin{equation} \label{eq:characterization.w12.l2}
		\Vert u(\cdot, t) \Vert_{W^{1,2}(\Sigma; V)} \lesssim \Vert u(\cdot, t) \Vert_{L^2(\Sigma; V)}, \; \forall t \leq 0.
	\end{equation}
	From now on don't need to use  \eqref{eq:negative.gradient.flow} again, and will just use \eqref{eq:characterization.nonlinear}. 
	
	Now \eqref{eq:characterization.nonlinear}, \eqref{eq:critical.point.equation.error.i}, \eqref{eq:critical.point.equation.error.ii} imply
	\begin{equation} \label{eq:characteration.error.bound}
		\Vert (\tfrac{\partial}{\partial t} - L) u(\cdot, t) \Vert_{L^2(\Sigma; V)} \lesssim \sigma(t) \Vert u(\cdot, t) \Vert_{W^{1,2}(\Sigma)} \lesssim \sigma(t) \Vert u(\cdot, t) \Vert_{L^2(\Sigma; V)}.
	\end{equation}
	Denote
	\begin{align*}
		U_-(t) & := \Vert \Pi_-(u(\cdot, t)) \Vert_{L^2(\Sigma; V)}, \\
		U_0(t) & := \Vert \Pi_0(u(\cdot, t)) \Vert_{L^2(\Sigma; V)}, \\
		U_+(t) & := \Vert (\operatorname{Id} - \Pi_- - \Pi_0)(u(\cdot, t)) \Vert_{L^2(\Sigma; V)},
	\end{align*}
	so that
	\[ \Vert u(\cdot, t) \Vert_{L^2(\Sigma; V)}^2 = U_-(t)^2 + U_0(t)^2 + U_+(t)^2. \]
	Proceeding as in \cite[Lemma 5.5]{AngenentDaskalopoulosSesum15}, we see that \eqref{eq:characteration.error.bound} implies:
	\begin{equation} \label{eq:characterization.ddt.pminus}
		\tfrac{d}{dt} U_- + \lambda_I U_- \gtrsim -\sigma \Vert u(\cdot, t) \Vert_{L^2(\Sigma; V)},
	\end{equation}
	\begin{equation} \label{eq:characterization.ddt.p0}
		|\tfrac{d}{dt} U_0| \lesssim \sigma \Vert u(\cdot, t) \Vert_{L^2(\Sigma; V)},
	\end{equation}
	\begin{equation} \label{eq:characterization.ddt.pplus}
		\tfrac{d}{dt} U_+ + \lambda_{I+K+1} U_+ \lesssim \sigma \Vert u(\cdot, t)  \Vert_{L^2(\Sigma; V)}.
	\end{equation}
	\begin{clai} \label{clai:characterization.pminus.dominant}
		$U_+ + U_0 \lesssim \sigma U_-$, $t \leq 0$.
	\end{clai}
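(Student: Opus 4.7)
The plan is to apply a three-variable Merle--Zaag trichotomy (Lemma \ref{lemm:mz.ode}) to $U_-, U_0, U_+$ using the differential inequalities \eqref{eq:characterization.ddt.pminus}, \eqref{eq:characterization.ddt.p0}, \eqref{eq:characterization.ddt.pplus}. Since $\Sigma$ is closed, $\Vert u(\cdot, t) \Vert_{L^2(\Sigma; V)} \lesssim \sigma(t)$, so all three right-hand sides are controlled by $\sigma(t)\,(U_- + U_0 + U_+)$ up to constants. Moreover, combining $\sigma \in L^1(\RR_-)$ from \eqref{eq:characterization.initial.decay} with the uniform Hölder-in-time bound on $\sigma$ that follows from \eqref{eq:characterization.assumption.c1theta} and parabolic Schauder, one concludes $\sigma(t) \to 0$ as $t \to -\infty$, hence also $U_-(t) + U_0(t) + U_+(t) \to 0$.

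The Merle--Zaag lemma then produces three alternatives: one of $U_-$, $U_0$, or $U_+$ dominates, in the sense that the other two are $\lesssim \sigma$ times the dominant one on $(-\infty, 0]$. I would then exclude the stable and neutral alternatives as follows. If $U_+$ dominated, then $U_- + U_0 \lesssim \sigma U_+$ combined with \eqref{eq:characterization.ddt.pplus} and smallness of $\sigma$ yields $U_+' + \tfrac{1}{2}\lambda_{I+K+1} U_+ \leq 0$; integrating backwards forces $U_+(t) \gtrsim U_+(0)\, e^{-\lambda_{I+K+1} t/2}$, which blows up as $t \to -\infty$ and contradicts $U_+(t) \to 0$. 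If instead $U_0$ dominated, then \eqref{eq:characterization.ddt.p0} reduces to $|U_0'| \lesssim \sigma U_0$, so that on any interval where $U_0 > 0$ one has $|\log U_0(t_1) - \log U_0(t_2)| \lesssim \Vert \sigma \Vert_{L^1(\RR_-)} < \infty$; this bounds $U_0$ uniformly away from $0$, contradicting $U_0(t) \to 0$ (unless $U_0 \equiv 0$, in which case the dominance alternative forces $U_- \equiv U_+ \equiv 0$ and the claim is trivial).

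The only remaining possibility is that $U_-$ dominates, which is precisely the content of the claim. I expect the main obstacle to be ensuring that Lemma \ref{lemm:mz.ode} is truly applicable in the form stated: the coupling coefficient $\sigma$ here is small in $L^\infty$ and integrable rather than comparable to the variables themselves, so I would verify that the appendix's extension of \cite{MerleZaag98} handles this precise setting (rather than the classical one where the coefficient is constant-small). Once that is confirmed, the trichotomy plus the two exclusions above give the claim cleanly, and the $L^1$ assumption \eqref{eq:characterization.assumption.L1} enters solely through ensuring $\sigma \to 0$ and $\Vert \sigma \Vert_{L^1(\RR_-)} < \infty$.
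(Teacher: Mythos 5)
Your plan reproduces the skeleton of the paper's argument (apply the Merle--Zaag-type lemma, then exclude the wrong alternative using the $L^1$ hypothesis) but differs in two notable ways.

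First, you frame Lemma~\ref{lemm:mz.ode} as a trichotomy in which any one of $U_-, U_0, U_+$ could be the dominant variable. In fact the lemma, as stated in the appendix, is a \emph{dichotomy}: given the hypothesis $\liminf_{s\to-\infty} y(s)=0$ --- here $y=U_+$, and this holds because $U_+\lesssim \sigma$ and $\sigma\in L^1$ forces $\liminf U_+ = 0$ --- the conclusion already makes $U_+$ subordinate via $y\le 2\eps(x+z)$, and the only alternative is between $x=U_0$ and $z=U_-$ dominating. Your separate argument to exclude $U_+$-dominance (backward exponential blow-up of $U_+$) is therefore redundant, though it is correct and harmless.

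Second, your exclusion of the $U_0$-dominant alternative uses a genuinely different mechanism from the paper's. The paper first uses a backward parabolic Schauder/Caccioppoli estimate to get $\sigma(t)\lesssim \max_{[t-1,t]}U_0$ once $\Vert u\Vert_{L^2}\lesssim U_0$, feeds this into the neutral-mode ODE to close a Riccati inequality $V'\lesssim V^2$ for $V(t):=\max_{[t-1,t]}U_0$, and then integrates backward to force $U_0\gtrsim |t|^{-1}$, which is not in $L^1$. You instead observe that $U_0$-dominance gives $|\,(\log U_0)'\,|\lesssim\sigma$ directly, so $\log U_0$ is Cauchy as $t\to-\infty$ by $\sigma\in L^1$; hence $U_0$ is bounded away from $0$ (unless $U_0\equiv 0$, the trivial case), contradicting $U_0\to 0$ (which follows from $U_0\lesssim\sigma$ and $\sigma\to 0$, the latter from $\sigma\in L^1$ together with the uniform H\"older continuity of $\sigma$ in time). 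Both arguments consume the $L^1$ assumption, but yours is more direct and avoids the Riccati step and the bound $\sigma\lesssim\max U_0$; the paper's version has the advantage of fitting the same ``estimate $\sigma$ from $L^2$ data via Schauder'' template it reuses in later claims.

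Finally, your concern that the coupling coefficient $\sigma(t)$ is time-varying rather than constant is a fair reading issue, but it is not a gap: one applies the lemma with $\eps:=\sup_t C\sigma(t)$ (small by \eqref{eq:characterization.assumption.small} and \eqref{eq:characterization.initial.decay}), after absorbing the $O(\eps y)$ term into $y'+\lambda_{I+K+1}y$ and the $O(\eps z)$ term into $z'+\lambda_I z$, and up to the obvious rescaling of the coefficients $1$ in $y'+y$ and $z'-z$ to $\lambda_{I+K+1}$ and $-\lambda_I$. The strictly $\sigma(t)$-weighted form $U_0+U_+\lesssim\sigma U_-$ stated in the claim is obtained a posteriori once $U_-\sim\sigma\sim e^{-\lambda_I t}$ is known from Claim~\ref{clai:characterization.strong}; what the lemma alone supplies is $U_0+U_+\lesssim\eps U_-$, which is exactly what is used to derive \eqref{eq:characterization.ddt.pminus.1}.
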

	\begin{proof}[Proof of claim]
		We apply the ODE Lemma \ref{lemm:mz.ode} with $x = U_0$, $y = U_+$, $z = U_-$. This already implies $U_+ \lesssim \sigma (U_- + U_0)$. Our claim will follow once we can show that \eqref{eq:mz.ode.B} holds true. Note that this is trivially true in case $K = \dim \ker L = 0$.
		
		Suppose, for the sake of contradiction, that $K \geq 1$ and \eqref{eq:mz.ode.A} holds true instead instead of \eqref{eq:mz.ode.B}. Then $U_+ + U_- \lesssim \sigma U_0$ and, therefore,
		\begin{equation} \label{eq:characterization.pminus.dominant.not}
			\Vert u(\cdot, t) \Vert_{L^2(\Sigma; V)} \lesssim U_0(t)
		\end{equation}
		for $t \leq -\tau$ and some $\tau > 0$.
		
		Linear parabolic Schauder theory, \eqref{eq:characterization.nonlinear}, and   \eqref{eq:characterization.pminus.dominant.not} imply
		\[ \sigma(t) \lesssim \int_{t-1}^t U_0(s) \, ds \leq \max_{[t-1,t]} U_0, \; t \leq -\tau. \]
		Therefore, \eqref{eq:characterization.ddt.p0}, \eqref{eq:characterization.pminus.dominant.not} together imply that $V(t) := \max_{[t-1,t]} U_0$ satisfies $V'(t) \lesssim V(t)^2$. Integrating and recalling the definition of $U_0$ 
		\[ \max_{[t-1,t]} U_0 \gtrsim |t|^{-1}, \; t \leq -\tau. \]
		Together with \eqref{eq:characterization.ddt.p0}, \eqref{eq:characterization.pminus.dominant.not}, again, this implies $U_0(t) \gtrsim |t|^{-1}$ for \emph{every} $t \leq -\tau$, which contradicts the $L^1$ finiteness in  \eqref{eq:characterization.initial.decay}. The claim follows. 
	\end{proof}

	Claim \ref{clai:characterization.pminus.dominant}  improves \eqref{eq:characterization.ddt.pminus} to:
	\begin{equation} \label{eq:characterization.ddt.pminus.1}
	\tfrac{d}{dt} U_- + \lambda_I U_- \gtrsim -\sigma U_-.
	\end{equation}
	At this point we may assume, without loss of generality, that $U_-(t) > 0$ for $t \leq 0$; otherwise, Claim \ref{clai:characterization.pminus.dominant} forces the trivial stationary situation $U_- \equiv U_0 \equiv U_+ \equiv 0$. 
	
	\begin{clai} \label{clai:characterization.strong} 
		$\sigma(t) \lesssim e^{-\lambda_I t}$, $t \leq 0$.
	\end{clai}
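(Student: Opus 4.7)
The plan is to first derive sharp $L^2$ decay for $u$ using the improved ODE \eqref{eq:characterization.ddt.pminus.1}, and then to upgrade this $L^2$ decay to $C^2$ decay via parabolic Schauder estimates applied to the quasilinear equation \eqref{eq:characterization.nonlinear}.

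First I would exploit Claim \ref{clai:characterization.pminus.dominant}, which (together with \eqref{eq:characterization.initial.decay}) forces $\Vert u(\cdot, t) \Vert_{L^2(\Sigma; V)} \lesssim U_-(t)$ for $t \leq 0$. One may assume $U_-(t) > 0$ throughout, as noted in the text. Then \eqref{eq:characterization.ddt.pminus.1} can be rewritten, since $\lambda_I < 0$, as the logarithmic differential inequality
\[ \tfrac{d}{dt} \log U_-(t) \geq -\lambda_I - C \sigma(t), \]
valid for all $t \leq 0$ and some $C > 0$.

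Next, I would integrate this inequality from $t$ to $0$. Using $\sigma \in L^1(\RR_-)$ from \eqref{eq:characterization.initial.decay}, the $-C\sigma$ contribution is uniformly bounded:
\[ \log U_-(0) - \log U_-(t) \geq -\lambda_I (0 - t) - C \int_t^0 \sigma(s) \, ds \geq \lambda_I t - C \Vert \sigma \Vert_{L^1(\RR_-)}. \]
Rearranging yields
\[ U_-(t) \lesssim e^{-\lambda_I t}, \; t \leq 0, \]
and therefore, by Claim \ref{clai:characterization.pminus.dominant} again,
\[ \Vert u(\cdot, t) \Vert_{L^2(\Sigma; V)} \lesssim e^{-\lambda_I t}, \; t \leq 0. \]

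Finally, I would upgrade this $L^2$ decay to a $C^2$ bound on $u(\cdot, t)$. Applying parabolic Schauder theory (see Appendix \ref{sec:regularity}) to the equation \eqref{eq:characterization.nonlinear} on the unit cylinder $\Sigma \times [t-1, t]$, using the uniform $C^{1,\theta}_P$ bound \eqref{eq:characterization.assumption.c1theta} to control the coefficients of the principal part and the inhomogeneity coming from the $\mathscr{N}$ and $\mathscr{Q}$ terms, I obtain
\[ \sigma(t) \lesssim \Vert u \Vert_{L^2(\Sigma \times [t-1, t]; V)} \lesssim \sup_{s \in [t-1,t]} e^{-\lambda_I s} \lesssim e^{-\lambda_I t}, \]
which is exactly the claim. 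The potential obstacle is ensuring that the Schauder estimate bootstrapping $L^2$ to $C^2$ applies cleanly to the quasilinear system \eqref{eq:characterization.nonlinear}, but this is addressed by the a priori $C^{1,\theta}_P$ smallness afforded by \eqref{eq:characterization.assumption.c1theta}--\eqref{eq:characterization.assumption.small} and the estimates in \eqref{eq:critical.point.equation.error.i}--\eqref{eq:critical.point.equation.error.ii}.
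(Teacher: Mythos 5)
Your argument is correct and reaches the same conclusion as the paper, but via a slightly more direct route. Both proofs rest on the same engine: the improved differential inequality \eqref{eq:characterization.ddt.pminus.1}, rewritten as $(\log U_-)' \geq -\lambda_I - C\sigma$, together with the observation from Claim \ref{clai:characterization.pminus.dominant} that $\Vert u(\cdot,t)\Vert_{L^2} \lesssim U_-(t)$, followed by a parabolic Schauder upgrade from $L^2$ decay to $C^2$ decay. The difference is in how the integral $\int_t^0 \sigma$ is controlled. The paper first proves an intermediate \emph{weak} bound $\sigma(t) \leq C_a e^{at}$ for every $a \in (0,-\lambda_I)$ (using the fact that $\sigma(t)\to 0$ as $t\to-\infty$, which follows from \eqref{eq:characterization.initial.decay} and Schauder), and then uses this pointwise exponential decay of $\sigma$ to bound $\int_t^0 \sigma$ when integrating $(\log(e^{\lambda_I s}U_-(s)))'$. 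You instead bypass the weak bound entirely and bound $\int_t^0 \sigma \leq \Vert\sigma\Vert_{L^1(\RR_-)}$ directly, using the $L^1$ integrability already recorded in \eqref{eq:characterization.initial.decay}. This is a genuine streamlining: the sub-optimal intermediate rate is not logically needed, only the finiteness of $\Vert\sigma\Vert_{L^1}$. The remaining Schauder upgrade from $\Vert u(\cdot,t)\Vert_{L^2}\lesssim e^{-\lambda_I t}$ to $\sigma(t)\lesssim e^{-\lambda_I t}$ requires a backward-in-time absorption/iteration argument (exploiting the uniform a priori $C^{2,\theta}_P$ bound that follows from \eqref{eq:characterization.assumption.c1theta} and the quadratic smallness \eqref{eq:critical.point.equation.error.i}--\eqref{eq:critical.point.equation.error.ii}); both you and the paper leave this implicit, and it does go through.
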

	\begin{proof}[Proof of claim]
		We first prove a weaker statement. For $a \in (0, -\lambda_I)$:
		\begin{equation} \label{eq:characterization.weak}
			\sigma(t) \leq C_a e^{a t}, \; t \leq 0.
		\end{equation}
		It follows from Claim \ref{clai:characterization.pminus.dominant}, \eqref{eq:characterization.initial.decay},  \eqref{eq:characterization.ddt.pminus}, that $(\log U_-)' \geq -a$ for $t \leq -\tau_a$, with $\tau_a$ large depending on $a$. Integrating,
		\[ \log U_-(-\tau_a) - \log U_-(t) = \int_t^{-\tau_a} (\log U_-)' \geq -a (t + \tau_a), \; t \leq -\tau_a. \]
		Rearranging, $U_-(t) \leq U_-(-\tau_a) e^{a (t+\tau_a)}$. Claim \ref{clai:characterization.pminus.dominant}, \eqref{eq:characterization.assumption.c1theta}, \eqref{eq:characterization.assumption.small},  and linear parabolic Schauder theory on \eqref{eq:characterization.nonlinear} now  imply \eqref{eq:characterization.weak}. 
		
		We now prove the strong bound. If $a \in (0, -\lambda_I)$,  \eqref{eq:characterization.ddt.pminus.1}, \eqref{eq:characterization.weak} imply
		\[ \log U_-(0) - \log (e^{\lambda_I t} U_-(t)) = \int_t^0 (\log (e^{\lambda_I s} U_-(s)))' \, ds \geq - \tfrac1a C_a. \]
		The claim follows by Claim \ref{clai:characterization.pminus.dominant}, \eqref{eq:characterization.assumption.c1theta}, \eqref{eq:characterization.assumption.small},  and linear parabolic Schauder theory on \eqref{eq:characterization.nonlinear}.
	\end{proof}

	For $j = 1, \ldots, I$ and $\lambda < 0$ denote:
	\[ u_j(t) := \langle u(\cdot, t), \varphi_j \rangle_{L^2(\Sigma; V)} \varphi_j, \]
	\[ S_{\geq \lambda} := \{ j \in \{1, \ldots, I \} : \lambda_j \geq \lambda \}. \]
	\[ U_{\geq \lambda}(t) := \Big[ \sum_{j \in S_{\geq \lambda}} \Vert u_j(t) \Vert_{L^2(\Sigma; V)}^2 \Big]^{1/2}, \]
	and so on for all symbols $<$, $=$, etc.

	\begin{clai} \label{clai:characterization.peel.irrelevant.terms}
		Suppose $I' \in \{ 1, \ldots, I \}$ is such that
		\begin{equation} \label{eq:characterization.peel.irrelevant.terms}
			j \in S_{\geq \lambda_{I'}} \implies \lim_{t \to -\infty} e^{\lambda_j t} u_j(t) = 0.
		\end{equation}
		Then $S_{< \lambda_{I'}} \neq \emptyset$ and $U_{\geq \lambda_{I'}} \lesssim \sigma U_{< \lambda_{I'}}$, $t \leq 0$.
	\end{clai}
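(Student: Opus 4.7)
My plan is to iterate the Merle--Zaag dichotomy used in Claim~\ref{clai:characterization.pminus.dominant}, but now at the spectral gap $\lambda_{I'-1}<\lambda_{I'}$ rather than at the gap $\lambda_I<0$. The asymmetric decay assumption \eqref{eq:characterization.peel.irrelevant.terms} plays the role that $L^1$-integrability played before: it is precisely what rules out the unwanted alternative in the dichotomy. First, I verify $S_{<\lambda_{I'}}\neq\emptyset$ by contradiction. If it were empty, then $I'=1$, and \eqref{eq:characterization.peel.irrelevant.terms} would give the variation-of-constants formula $u_j(t)=\int_{-\infty}^t e^{\lambda_j(s-t)}h_j(s)\,ds$ for every unstable $j$, where $h_j(s):=\langle(\tfrac{\partial}{\partial t}-L)u(\cdot,s),\varphi_j\rangle_{L^2(\Sigma;V)}$. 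Combining $\|h(\cdot,s)\|_{L^2}\lesssim\sigma(s)\|u(\cdot,s)\|_{L^2}$, the initial exponential decay of $\|u\|_{L^2}$ supplied by Claims~\ref{clai:characterization.pminus.dominant} and~\ref{clai:characterization.strong}, and an iterative improvement of the decay exponent based on the integral representation, one concludes that $\|u(\cdot,t)\|_{L^2}$ decays faster than every exponential as $t\to-\infty$. Since $\|u\|_{L^2}\lesssim U_-$ by Claim~\ref{clai:characterization.pminus.dominant}, this forces $U_-\equiv 0$ in a neighborhood of $-\infty$, contradicting the standing assumption $U_->0$.

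Next I set up the dichotomy between $U_{\geq\lambda_{I'}}$ and $U_{<\lambda_{I'}}$. Direct $L^2$-energy computations, in the spirit of \cite[Lemma 5.5]{AngenentDaskalopoulosSesum15}, give
\[\tfrac{d}{dt}U_{\geq\lambda_{I'}}+\lambda_{I'}U_{\geq\lambda_{I'}}\leq C\sigma\|u\|_{L^2},\qquad \tfrac{d}{dt}U_{<\lambda_{I'}}+\lambda_{I'-1}U_{<\lambda_{I'}}\geq -C\sigma\|u\|_{L^2},\]
and Claim~\ref{clai:characterization.pminus.dominant} lets us absorb $\|u\|_{L^2}$ into $U_{\geq\lambda_{I'}}+U_{<\lambda_{I'}}$ up to a constant. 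The strict spectral gap $\lambda_{I'-1}<\lambda_{I'}<0$ puts this pair in the scope of the extension of the Merle--Zaag ODE lemma from Appendix~\ref{sec:mz.ode}, producing the expected dichotomy: either $U_{\geq\lambda_{I'}}\lesssim\sigma U_{<\lambda_{I'}}$, or $U_{<\lambda_{I'}}\lesssim\sigma U_{\geq\lambda_{I'}}$, as $t\to-\infty$.

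The second alternative is excluded by \eqref{eq:characterization.peel.irrelevant.terms}. Indeed, if $U_{<\lambda_{I'}}\lesssim\sigma U_{\geq\lambda_{I'}}$, then $\|u\|_{L^2}\lesssim U_{\geq\lambda_{I'}}$, and the variation-of-constants representation $u_j(t)=\int_{-\infty}^t e^{\lambda_j(s-t)}h_j(s)\,ds$ available for every $j\in S_{\geq\lambda_{I'}}$, combined with an iteration parallel to the one in the first step but now run relative to the rate $-\lambda_{I'}$, yields super-exponential decay of $U_{\geq\lambda_{I'}}$; together with $U_{<\lambda_{I'}}\lesssim\sigma U_{\geq\lambda_{I'}}$, this forces $U_-\equiv 0$, contradicting the standing assumption. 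So only the first alternative $U_{\geq\lambda_{I'}}\lesssim\sigma U_{<\lambda_{I'}}$ can hold, which is the claim. The main obstacle is verifying that the extension of Merle--Zaag in Appendix~\ref{sec:mz.ode} genuinely accommodates two unstable rates rather than the usual neutral/stable/unstable triple; once that is in hand, the two bootstrap arguments become standard applications of the integral representation, provided the iteration exponents stay strictly within the gap $\lambda_{I'-1}<\lambda_{I'}$.
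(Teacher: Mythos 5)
Your route is genuinely different from the paper's: you attempt a single application of Merle--Zaag across the gap at $\lambda_{I'}$, with the two-way split $U_{\geq\lambda_{I'}}$ versus $U_{<\lambda_{I'}}$, while the paper proceeds by backward induction on $I'$ and at each level uses a two-way split $U_{=\lambda_{I'}}$ versus $U_{<\lambda_{I'}}$, the modes with $\lambda_j>\lambda_{I'}$ having been absorbed already via the inductive hypothesis. Unfortunately your version has a gap that the induction is precisely designed to avoid.

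The issue is the hypothesis $\liminf_{s\to-\infty}y(s)=0$ of Lemma~\ref{lemm:mz.ode}. In your normalization, the ``stable'' variable is $y=e^{\mu t}U_{\geq\lambda_{I'}}$ for some $\mu$ in the spectral gap $(\lambda_{\max},\lambda_{I'})$, where $\lambda_{\max}$ denotes the largest eigenvalue strictly below $\lambda_{I'}$ (you wrote $\lambda_{I'-1}$, but that may equal $\lambda_{I'}$). The only pointwise information available from \eqref{eq:characterization.peel.irrelevant.terms} is $|u_j(t)|=o(e^{-\lambda_j t})$ for $j\in S_{\geq\lambda_{I'}}$, and since the slowest decaying of these is $e^{-\lambda_I t}$, this only gives $U_{\geq\lambda_{I'}}=o(e^{-\lambda_I t})$. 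But then $y=o(e^{(\mu-\lambda_I)t})$, and with $\mu<\lambda_{I'}\leq\lambda_I$ and $t\to-\infty$, the reference factor $e^{(\mu-\lambda_I)t}$ blows up; so $\liminf y=0$ is \emph{not} guaranteed a priori. The modes with $\lambda_j>\lambda_{I'}$ are exactly what spoil this, and the paper's backward induction (peeling them off one eigenvalue level at a time, so that at each stage the top block $U_{=\lambda_{I'}}$ sits in the $x$ slot, $y\equiv 0$, and $\liminf y=0$ is trivial) is what sidesteps the problem.

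A secondary issue is the way you try to resolve the dichotomy: ``super-exponential decay of $U_{\geq\lambda_{I'}}$ $\ldots$ forces $U_-\equiv 0$.'' A nonnegative function can decay faster than every exponential as $t\to-\infty$ without being identically zero, so this step does not close. (It could be salvaged by combining $\sigma\in L^1$ with the coarse lower bound $U_-\gtrsim e^{-\lambda_1 t}$ derivable from $(\log U_-)'\leq-\lambda_1+C\sigma$; but the paper's argument is more direct: when the bad alternative holds, the logarithmic derivative of $e^{\lambda_{I'}t}U_{=\lambda_{I'}}$ is bounded by $\sigma\in L^1$, so $\lim_{t\to-\infty}e^{\lambda_{I'}t}U_{=\lambda_{I'}}(t)>0$, contradicting \eqref{eq:characterization.peel.irrelevant.terms} outright.) Related, the iterated Duhamel representations you invoke require absolute convergence of $\int_{-\infty}^t e^{\lambda_j(s-t)}h_j(s)\,ds$, which for the most negative eigenvalues need not hold at the first iterate when only $\|h\|_{L^2}\lesssim e^{-2\lambda_I t}$ is known, since $\lambda_j-2\lambda_I$ can be negative for $j$ small. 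In short, the direct two-way split is appealing but does not presently meet the hypotheses of the ODE lemma, and the bootstrap used to rule out the wrong alternative is not airtight as stated.
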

	\begin{proof}[Proof of claim]
		We prove this by backward induction on $I'$. Let us see how the base case, $I' = I$, goes. Thus, assume \eqref{eq:characterization.peel.irrelevant.terms} holds for $I' = I$. For any $\lambda < \lambda_I$ and $\lambda \geq \max \{ \lambda_j : j \in S_{<\lambda_I} \}$, we have:
		\begin{equation} \label{eq:characterization.ddt.pI}
			|\tfrac{d}{dt} U_{=\lambda_I} + \lambda_I U_{=\lambda_I}| \lesssim \sigma U_-,
		\end{equation}
		\begin{equation} \label{eq:characterization.ddt.pIminus1}
			\tfrac{d}{dt} U_{<\lambda_I} + \lambda U_{<\lambda_I} \gtrsim - \sigma U_-;
		\end{equation}
		the second ODE being interpreted as vacuously true in case $S_{< \lambda_{I}} = \emptyset$.
		
		Note that $U_{=\lambda_I} \lesssim U_{<\lambda_I}$. If not, then \eqref{eq:characterization.ddt.pI} would imply 
		\[ | \tfrac{d}{dt} (e^{\lambda_I t} U_{=\lambda_I}) | \lesssim \sigma e^{\lambda_I t} U_- \lesssim \sigma e^{\lambda_I t} U_{=\lambda_I} \implies | \tfrac{d}{dt} \log (e^{\lambda_I t} U_{=\lambda_I})| \lesssim \sigma. \]
		Integrating, and using $\sigma \in L^1$ from  \eqref{eq:characterization.initial.decay}, we get $\lim_{t \to -\infty} e^{\lambda_I t} U_{=\lambda_I}(t) > 0$; this contradicts  \eqref{eq:characterization.peel.irrelevant.terms}. Therefore, $U_{=\lambda_I} \lesssim U_{<\lambda_I}$ as claimed. 
		
		As a consequence, $S_{< \lambda_I} \neq \emptyset$. Moreover, the ODE Lemma \ref{lemm:mz.ode} applied to \eqref{eq:characterization.ddt.pI}, \eqref{eq:characterization.ddt.pIminus1} with $x := e^{\lambda_I t} U_{=\lambda_I}$, $z := e^{\lambda_I t} U_{<\lambda_I}$, improves $U_{=\lambda_I} \lesssim U_{<\lambda_I}$ to $U_{=\lambda_I} \lesssim \sigma U_{<\lambda_I}$. This completes the base case of the backward induction. 
		
		For the general case, repeat with $I'$ instead of $I$ in \eqref{eq:characterization.ddt.pI}, \eqref{eq:characterization.ddt.pIminus1}.
	\end{proof}

	Let $I^* \in \{ 1, \ldots, I \}$ be the largest index for which \eqref{eq:characterization.peel.irrelevant.terms}  \emph{fails}. Then:
	\begin{equation} \label{eq:characterization.ddt.dominant}
		\tfrac{d}{dt} U_{\leq \lambda_{I^*}} + \lambda_{I^*} U_{\leq \lambda_{I^*}} \gtrsim -\sigma U_{\leq \lambda_{I^*}}.
	\end{equation}
	We have thus established that the analog of \eqref{eq:characterization.ddt.pminus.1} holds for the topmost modes that do not vanish at infinity, and Claim \ref{clai:characterization.peel.irrelevant.terms} now plays the role of Claim  \ref{clai:characterization.pminus.dominant}. Thus, arguing as in Claim  \ref{clai:characterization.strong}, we obtain the following sharp estimate:
	\begin{equation} \label{eq:characterization.dominant.sigma}
		\sigma(t) \lesssim e^{-\lambda_{I^*} t}.
	\end{equation}

	In all that follows, $\tau \geq 0$ and $t \leq 0$. Set
	\[ u^{(\tau)}(\cdot, t) := u(\cdot, t-\tau). \]
	Our assumption on $I^*$ guarantees that
	\begin{equation} \label{eq:characterization.dominant.RHS}
		\limsup_{\tau \to \infty} e^{-\lambda_{I^*} \tau} \Vert \Pi_-(u^{(\tau)}(\cdot, 0))\Vert_{L^2(\Sigma; V)} > 0.	
	\end{equation}
	
	\begin{clai} \label{clai:characterization.dominant.LHS}
		For every $t \leq 0$,
		\begin{equation} \label{eq:characterization.dominant.LHS}
			e^{-2\lambda_{I^*} \tau} \Vert u^{(\tau)}(\cdot, t) - \iota_-(u_1(-\tau), \ldots, u_I(-\tau))(\cdot, t) \Vert_{L^2(\Sigma; V)} \lesssim e^{-\lambda_I t}.	
		\end{equation}
	\end{clai}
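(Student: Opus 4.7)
The plan is to introduce the remainder $w(\cdot, t) := u^{(\tau)}(\cdot, t) - \iota_-(\bm{a}^{(\tau)})(\cdot, t)$, where $\bm{a}^{(\tau)} \in \RR^I$ is the vector of eigencoefficients $\langle u(\cdot, -\tau), \varphi_j\rangle_{L^2}$, $j = 1, \ldots, I$. Since $\iota_-(\bm{a}^{(\tau)})$ solves the linearized equation $(\tfrac{\partial}{\partial t} - L)(\cdot) = 0$, the nonlinear form \eqref{eq:characterization.nonlinear} of the flow implies
\[
(\tfrac{\partial}{\partial t} - L) w = \tilde h, \qquad \tilde h := \langle \mathscr{N}(x, u^{(\tau)}, \nabla_g u^{(\tau)}), \nabla_g^2 u^{(\tau)} \rangle_g + \mathscr{Q}(x, u^{(\tau)}, \nabla_g u^{(\tau)}),
\]
with $\Pi_- w(\cdot, 0) = 0$ built in. From \eqref{eq:characteration.error.bound}, the sharp decay $\sigma(s) \lesssim e^{-\lambda_{I^*} s}$ in \eqref{eq:characterization.dominant.sigma}, and the trivial $\Vert u(\cdot, s)\Vert_{L^2} \lesssim \sigma(s)$, I would obtain $\Vert \tilde h(\cdot, t)\Vert_{L^2} \lesssim e^{2\lambda_{I^*}(\tau - t)}$.

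Next, I decompose $w = w_- + w_0 + w_+$ spectrally and handle the neutral-plus-stable parts directly. Because $\iota_-(\bm{a}^{(\tau)})$ lives entirely in the unstable eigenspace, $w_0(\cdot, t) = \Pi_0 u^{(\tau)}(\cdot, t)$ and $w_+(\cdot, t) = (\operatorname{Id} - \Pi_- - \Pi_0) u^{(\tau)}(\cdot, t)$, so their $L^2$ norms equal $U_0(t-\tau)$ and $U_+(t-\tau)$. Claim \ref{clai:characterization.pminus.dominant} together with $U_- \lesssim \sigma$ and $\sigma \lesssim e^{-\lambda_{I^*} s}$ gives $U_0(s) + U_+(s) \lesssim \sigma(s) U_-(s) \lesssim e^{-2\lambda_{I^*} s}$, so, at $s = t - \tau$,
\[
\Vert w_0(\cdot, t)\Vert_{L^2} + \Vert w_+(\cdot, t)\Vert_{L^2} \lesssim e^{2\lambda_{I^*}\tau - 2\lambda_{I^*} t} \leq e^{2\lambda_{I^*}\tau - \lambda_I t},
\]
where the last inequality uses $2\lambda_{I^*} \leq 2\lambda_I < \lambda_I < 0$ and $t \leq 0$.

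For the unstable part, I project the equation for $w$ onto each $\varphi_j$ with $j \leq I$ to get the scalar linear ODE $(w_{-,j})' + \lambda_j w_{-,j} = \langle \tilde h(\cdot, t), \varphi_j\rangle$ with terminal condition $w_{-,j}(0) = 0$. Variation of parameters yields
\[
|w_{-,j}(t)| \lesssim e^{2\lambda_{I^*}\tau - \lambda_j t} \int_t^0 e^{(\lambda_j - 2\lambda_{I^*})s}\, ds,
\]
and a short three-case analysis on the sign of $\lambda_j - 2\lambda_{I^*}$ shows $|w_{-,j}(t)| \lesssim e^{2\lambda_{I^*}\tau - \lambda_I t}$ throughout: when $\lambda_j > 2\lambda_{I^*}$, the integral is bounded and one uses $-\lambda_j \leq -\lambda_I$; when $\lambda_j < 2\lambda_{I^*}$, the integral is $\lesssim e^{(\lambda_j - 2\lambda_{I^*})t}$, which telescopes the $\lambda_j$'s and yields $e^{-2\lambda_{I^*} t} \leq e^{-\lambda_I t}$; and at the resonance $\lambda_j = 2\lambda_{I^*}$, a stray factor $|t|$ is absorbed into the strict exponential gap $e^{(\lambda_I - 2\lambda_{I^*})|t|}$. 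Summing over $j$ and combining with the neutral/stable estimate yields $\Vert w(\cdot, t)\Vert_{L^2} \lesssim e^{2\lambda_{I^*}\tau - \lambda_I t}$, which rearranges to \eqref{eq:characterization.dominant.LHS}. The only delicate bookkeeping is the resonance in the variation-of-parameters step; otherwise the argument is standard linear-ODE tracking of exponential rates.
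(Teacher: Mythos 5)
Your argument is correct and follows essentially the same route as the paper's: the neutral and stable components are bounded via Claim \ref{clai:characterization.pminus.dominant} and \eqref{eq:characterization.dominant.sigma} exactly as in \eqref{eq:characterization.dominant.LHS.i}, and your variation-of-parameters estimate on the unstable coefficients, with the three-case split on the sign of $\lambda_j - 2\lambda_{I^*}$, reproduces the paper's \eqref{eq:characterization.dominant.LHS.ii}--\eqref{eq:characterization.dominant.LHS.ii.degenerate}. One small sign slip: in the case $\lambda_j > 2\lambda_{I^*}$ the relevant ordering is $\lambda_j \leq \lambda_I$, i.e. $-\lambda_j \geq -\lambda_I$ (not $\leq$), which for $t \leq 0$ gives $e^{-\lambda_j t} \leq e^{-\lambda_I t}$; your conclusion is right, only the stated justification has its inequality reversed.
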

	\begin{proof}[Proof of claim]
		First:
		\begin{equation} \label{eq:characterization.dominant.LHS.i}
			e^{-2\lambda_{I^*} \tau} (U_0 + U_+)(t-\tau) \lesssim e^{-2\lambda_{I^*} \tau} \sigma(t-\tau) U_-(t-\tau) \lesssim e^{-2\lambda_{I^*} t}.
		\end{equation}
		where we've used Claim \ref{clai:characterization.pminus.dominant} and \eqref{eq:characterization.dominant.sigma}. Second, for $j \in \{1, \ldots, I\}$, it is easy to see that Claims  \ref{clai:characterization.pminus.dominant}, \ref{clai:characterization.peel.irrelevant.terms} imply $\Vert \tfrac{d}{dt} u_j + \lambda_j u_j \Vert \lesssim \sigma U_{\leq \lambda_{I^*}}$. Multiplying through with $e^{\lambda_j t}$, integrating, and using \eqref{eq:characterization.dominant.sigma} again:
		\begin{equation} \label{eq:characterization.dominant.LHS.ii}
			e^{-2\lambda_{I^*} \tau} \Vert u_j(t-\tau) - e^{-\lambda_j t} u_j(-\tau) \Vert \lesssim e^{-\max \{\lambda_j, 2 \lambda_{I^*}\} t}
		\end{equation}
		when $\lambda_j \neq 2 \lambda_{I*}$, or, in case $\lambda_j = 2\lambda_{I^*}$,
		\begin{equation} \label{eq:characterization.dominant.LHS.ii.degenerate}
			e^{-2\lambda_{I^*} \tau} \Vert u_j(t-\tau) - e^{-\lambda_j t} u_j(-\tau) \Vert \lesssim |t| e^{-2\lambda_{I^*} t}.
		\end{equation}
		Combining \eqref{eq:characterization.dominant.LHS.i}, \eqref{eq:characterization.dominant.LHS.ii}, \eqref{eq:characterization.dominant.LHS.ii.degenerate} gives the result.
	\end{proof}

	Let $\delta_0 \in (0, -\lambda_I)$. Linear parabolic Schauder theory on \eqref{eq:characterization.nonlinear} promotes \eqref{eq:characterization.dominant.LHS} to
	\begin{equation} \label{eq:characterization.dominant.LHS.weighted}
		e^{-2\lambda_{I^*} \tau} \Vert u^{(\tau)} - \iota_-(u_1(-\tau), \ldots, u_I(-\tau)) \Vert_{C^{2,\theta,\delta_0}_P(\Sigma \times \RR_-; V)} \lesssim 1.
	\end{equation}
	The result follows from the uniqueness aspect of Theorem \ref{theo:existence} applied to $u^{(\tau)}$ after choosing a sufficiently large $\mu$, depending on the implicit constants of  \eqref{eq:characterization.dominant.RHS}, \eqref{eq:characterization.dominant.LHS.weighted}, and a sufficiently large $\tau$ from \eqref{eq:characterization.dominant.RHS}.
\end{proof}

We now seek to provide sufficient conditions that will guarantee the $L^1$ decay needed to apply Theorem \ref{theo:characterization}. To that end, we recall the notion of integrable critical points:

\begin{defi} \label{defi:integrable}
	The critical point $f=0$ of $\cA$ is said to be {\bf integrable} if for every $\phi \in \ker L$ there exists a family of $\{f_t\}_{0 < t < 1} \subset \cM^{1,\theta}(1)$ with $\cA(f_t) = \cA(0)$ and $\lim_{t \to 0} \Vert \tfrac{1}{t} f_t - \phi \Vert_{C^{1,\theta}(\Sigma; V)} = 0$.
\end{defi}

\begin{lemm}[cf. {\cite[Lemma 6.4, Part II]{Simon84}}]  \label{lemm:integrable.technical.lemma}
	Let $0$ be an integrable critical point of $\cA$. Fix $\theta \in (0, 1)$. There exist $\eps, c, \tau > 0$ such that if $\psi_* \in \cM^{1,\theta}(\eps)$ and $u : \Sigma \times \RR_- \to V$ is a smooth solution of \eqref{eq:negative.gradient.flow} with  $\Vert u \Vert_{C^{1,\theta}_P(\Sigma \times \RR_-; V)} < \eps$, then
	\begin{align} \label{eq:integrable.technical.lemma}
		& \Vert u - \psi_{**} \Vert_{C^{1,\theta}_P(\Sigma \times [-3\tau, -\tau]; V)} \\
		& \qquad \leq \tfrac12 \max \Big\{ c \Big[ \lim_{t \to -\infty} \cA(u(\cdot, t)) - \cA(\psi_*) \Big]_+^{1/2}, \Vert u - \psi_* \Vert_{C^{1,\theta}_P(\Sigma \times [-2\tau, 0]; V)} \Big\} \nonumber
	\end{align}
	for some $\psi_{**} \in \cM^{1,\theta}(1)$.
\end{lemm}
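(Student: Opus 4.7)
I would model the proof on Simon's iteration \cite[Lemma 6.4, Part II]{Simon84}, adapted to the ancient regime and exploiting integrability both to flatten the critical set and to sharpen the \L{}ojasiewicz--Simon exponent. First I would use Definition \ref{defi:integrable} and the finite-dimensional reduction \eqref{eq:critical.point.space} to show that $\cA_{\operatorname{fin}} \equiv \cA(0)$ on a neighborhood of $\bm{0} \in \RR^K$, making $\cM$ a smooth $K$-dimensional submanifold of $C^{1,\theta}(\Sigma;V)$ near $0$ on which $\cA \equiv \cA(0)$. In particular $\cA(\psi_*) = \cA(0)$, so $E := [\lim_{t\to-\infty}\cA(u(\cdot,t)) - \cA(\psi_*)]_+ = [\cA_\infty - \cA(0)]_+$. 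The same reduction, combined with the implicit function theorem at $\cM$, would give the optimal-exponent \L{}ojasiewicz inequality
\[
\dist_{L^2}(f, \cM) + [\cA(f)-\cA(0)]_+^{1/2} \lesssim \Vert \cH(f)\Vert_{L^2}
\]
for $f$ near $0$ in $C^{1,\theta}$. Coupling this with $\partial_t \cA(u) = -\Vert \partial_t u\Vert^2_{L^2}$ in the standard Simon manner would yield the $L^1$-in-time estimate $\int_{-\infty}^0 \Vert \partial_t u(\cdot,t)\Vert_{L^2}\,dt \lesssim E^{1/2}$.

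I would split the conclusion on which term in the $\max$ dominates. In \emph{Case A}, $cE^{1/2} \geq \Vert u - \psi_*\Vert_{C^{1,\theta}_P(\Sigma\times[-2\tau,0]; V)}$, and I would choose $\psi_{**} \in \cM$ to be the $L^2$-nearest critical point to $u(\cdot, -2\tau)$; \L{}ojasiewicz gives $\Vert u(\cdot, -2\tau) - \psi_{**}\Vert_{L^2} \lesssim E^{1/2}$, which the $L^1_t L^2_x$ bound on $\partial_t u$ propagates to $\sup_{[-3\tau,-\tau]} \Vert u - \psi_{**}\Vert_{L^2} \lesssim E^{1/2}$. Parabolic Schauder on \eqref{eq:characterization.nonlinear} applied to $u - \psi_{**}$ then upgrades to $C^{1,\theta}_P$, and choosing $c$ large absorbs the implicit constants together with the factor $\tfrac12$. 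In \emph{Case B}, with $\epsilon_* := \Vert u - \psi_*\Vert_{C^{1,\theta}_P(\Sigma\times[-2\tau,0]; V)} > cE^{1/2}$, I would write $w := u - \psi_*$ and decompose $w = w_- + w_0 + w_+$ by spectral projection onto the unstable, neutral, and stable eigenspaces of the Jacobi operator $L_{\psi_*}$ at $\psi_*$ (close to $L$ by continuity). The bounded-ancient hypothesis forces $w_+$ to be exponentially small in $|t|$ (else backward growth of the stable modes would violate boundedness); the unstable component satisfies $\sup_{[-3\tau,-\tau]}\Vert w_-\Vert \lesssim e^{\lambda_I \tau}\epsilon_*$ since $\lambda_I < 0$; and the neutral component $w_0$ drifts at rate only $O(\epsilon_*^2)$ because $L_{\psi_*}$ annihilates $\ker L_{\psi_*}$. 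I would then define $\psi_{**} \in \cM$ by moving from $\psi_*$ along $\cM$ in the direction of $\bar w_0$, the time-average of $w_0$ on $[-3\tau, -\tau]$. The difference $u - \psi_{**}$ on $[-3\tau, -\tau]$ would decompose into $w_-$, $w_+$, the drift residual $w_0 - \bar w_0$, and a second-order curvature term for $\cM$ of size $O(\epsilon_*^2)$; choosing $\tau$ so large that $e^{\lambda_I \tau} \leq \tfrac18$ and $\eps$ correspondingly small would make the total $\leq \tfrac{\epsilon_*}{2}$ in $L^2$, with parabolic Schauder promoting this to $C^{1,\theta}_P$.

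The hard part will be reconciling the $C^{1,\theta}_P$ norm appearing in the conclusion with the $L^2$ and $L^\infty_t L^2_x$ norms natural to \L{}ojasiewicz--Simon and to the spectral decomposition; this is handled throughout by parabolic Schauder interpolation applied to \eqref{eq:characterization.nonlinear}. The overall architecture mirrors \cite[Lemma 6.4, Part II]{Simon84}, but in the backward-time (ancient) regime where the roles of stable and unstable modes are exchanged.
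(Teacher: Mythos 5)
Your proposal takes a genuinely different route from the paper. The paper's proof of Lemma \ref{lemm:integrable.technical.lemma} makes no use of a \L{}ojasiewicz--Simon inequality or case analysis at all: it is a pure compactness/blow-up argument. One assumes a contradicting sequence $(\psi_*^{(k)}, u^{(k)})$, uses the gradient-flow energy identity to get an $L^2_tL^2_x$ bound on $\partial_t u^{(k)}$, normalizes $\hat u^{(k)} := (\beta^{(k)})^{-1}(u^{(k)} - \psi_*^{(k)})$, and extracts a limit $\hat u$ solving the linear equation $\partial_t\hat u = L\hat u$ with finite $\int_{-\infty}^0\Vert\partial_t\hat u\Vert_{L^2}^2$. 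That forces $\hat u = \hat\phi + \iota_-(\hat{\bm a})$ with $\hat\phi\in\ker L$, and then integrability lets one build $\psi_{**}$ near $\psi_*^{(k)} + \beta^{(k)}\hat\phi$, while the unstable part $\iota_-(\hat{\bm a})$ is killed on $[-3\tau,-\tau]$ by choosing $\tau$ large (Claim \ref{clai:tau.zero}). Your version is closer in spirit to the direct iteration of Simon's earlier work; it is not the route the paper takes, and as written it has several gaps.

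The most serious issues:

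\emph{(i) $\cA_{\operatorname{fin}}\equiv\cA(0)$ does not follow from Definition \ref{defi:integrable}.} Integrability gives, for each direction $\phi\in\ker L$, a single one-parameter curve of critical points with $\cA\equiv\cA(0)$; it does not assert these curves sweep out a neighborhood of $\bm 0$ in $\RR^K$, nor that every nearby critical point lies on such a curve. The paper itself treats the hypothesis $\cA(\psi_*)=\cA(0)$ as exactly that---a hypothesis, in Remark \ref{rema:integrable.technical.lemma.area}---and lists it as a \emph{separate} consequence of analyticity in Remark \ref{rema:integrable.sufficient}. So you cannot start by asserting $\cA(\psi_*)=\cA(0)$ for a general $\psi_*\in\cM^{1,\theta}(\eps)$.

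\emph{(ii) The sharp-exponent \L{}ojasiewicz inequality is asserted, not proved.} Even though one expects an exponent-$\tfrac12$ \L{}ojasiewicz inequality in the integrable case (as in Allard--Almgren), establishing it is itself a nontrivial lemma and is not supplied. The paper avoids this entirely.

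\emph{(iii) The spectral claims in Case B are not justified in the nonlinear setting.} Writing $w = u - \psi_*$, $w$ does not solve the linearized equation; the claim ``$w_+$ is exponentially small by backward boundedness'' is a linear heuristic. In the genuinely nonlinear ancient regime one only gets something like $\Vert w_+\Vert\lesssim\sigma(\Vert w_-\Vert + \Vert w_0\Vert)$, and obtaining even that requires a Merle--Zaag type ODE argument (and, in the paper's Theorem \ref{theo:characterization}, an $L^1$-in-time hypothesis that you don't have here). Similarly, ``moving from $\psi_*$ along $\cM$'' presumes $\cM$ is a smooth manifold near $\psi_*$ of dimension $K$, which is again more than Definition \ref{defi:integrable} (stated at $0$) hands you.

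The blow-up proof in the paper sidesteps all of this at once: in the rescaled limit the equation literally becomes linear, the stable modes are annihilated by the finite-energy constraint, and integrability is only invoked once, at the very end, to place $\psi_*^{(k)} + \beta^{(k)}\hat\phi$ next to a genuine critical point.
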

\begin{proof}
	Below, $\lesssim$ will be used for  inequalities that hold up to multiplicative constants that  depend on $\cA$, $\Sigma$, $g$, $V$, $\theta$, $C_0$. We will adapt Simon's proof from the elliptic and forward-parabolic settings in \cite{Simon84} to the backward parabolic setting. We argue by contradiction. If the conclusion were false, then there would exist sequences
	\begin{itemize}
		\item $\psi_*^{(k)} \in  \cM^{1,\theta}(1/k)$, and 
		\item $u^{(k)} : \Sigma \times \RR_- \to \RR$ of solutions to \eqref{eq:negative.gradient.flow} with 
		\[ \Vert u^{(k)} \Vert_{C^{1,\theta}_P(\Sigma \times \RR_-; V)} < 1/k, \]
	\end{itemize}
	so that
	\begin{align} \label{eq:integrable.technical.lemma.contradiction}
		& \inf \Big\{ \Vert u^{(k)} - \psi_{**}^{(k)} \Vert_{C^{1,\theta}_P(\Sigma \times [-3\tau, -\tau]; V)} : \psi_{**} \in \cM^{1,\theta}(1) \Big\} \\
		& > \tfrac12 \max \Big \{ k^{1/2} \Big[ \lim_{t \to -\infty} \cA(u^{(k)}(\cdot, t)) - \cA(\psi_*^{(k)}) \Big]_+^{1/2},  \Vert u^{(k)} - \psi_*^{(k)} \Vert_{C^{1,\theta}_P(\Sigma \times [-2\tau, 0]; V)} \Big\}. \nonumber  
	\end{align}
	It follows from \eqref{eq:negative.gradient.flow} that
	\begin{equation} \label{eq:integrable.technical.lemma.L2.int.leq.area.diff}
		\int_{-\infty}^0 \int_\Sigma \Vert \tfrac{\partial}{\partial t} u^{(k)}(\cdot, t) \Vert^2 \, d\mu_g \, dt \lesssim \lim_{t \to -\infty} \cA(u^{(k)}(\cdot, t)) - \cA(u^{(k)}(\cdot, 0)).
	\end{equation}
	By a crude estimate on $\cA(u^{(k)}(\cdot, 0)) - \cA(\psi_*^{(k)})$ this implies
	\begin{multline} \label{eq:integrable.technical.lemma.L2.int.leq.C1}
		\int_{-\infty}^0 \int_\Sigma \Vert \tfrac{\partial}{\partial t} u^{(k)}(\cdot, t) \Vert^2 \, d\mu_g \, dt \lesssim \Vert u^{(k)}(\cdot, 0) - \psi^{(k)}_* \Vert_{C^1(\Sigma; V)}^2 \\
		+ \lim_{t \to -\infty} \cA(u^{(k)}(\cdot, t)) - \cA(\psi_*^{(k)}).
	\end{multline}
	Therefore,
	\begin{multline} \label{eq:integrable.technical.lemma.L2.leq.C1}
		\Vert u^{(k)}(\cdot, t) - \psi^{(k)}_* \Vert_{L^2(\Sigma; V)}^2 \lesssim |t| \Big[ \Vert u^{(k)}(\cdot, 0) - \psi^{(k)}_* \Vert_{C^1(\Sigma; V)}^2 \\
		+ \lim_{t \to -\infty} \cA(u^{(k)}(\cdot, t)) - \cA(\psi_*^{(k)}) \Big], \; t \leq -1.
	\end{multline}
	At this point we will no longer need to use the variational structure of \eqref{eq:negative.gradient.flow}, and will instead use \eqref{eq:characterization.nonlinear}. Linear parabolic Schauder theory implies
	\begin{multline} \label{eq:integrable.technical.lemma.C1alpha.leq.C1}
		\Vert u^{(k)} - \psi^{(k)}_* \Vert_{C^{1,\theta}_P(\Sigma \times [t, -1]; V)}^2  \lesssim |t| \Big[ \Vert u^{(k)}(\cdot, 0) - \psi^{(k)}_* \Vert_{C^1(\Sigma; V)}^2 \\
		+ \lim_{t \to -\infty} \cA(u^{(k)}(\cdot, t)) - \cA(\psi_*^{(k)}) \Big], \; t \leq -1.
	\end{multline}
	Together with   \eqref{eq:integrable.technical.lemma.contradiction} applied with $\psi^{(k)}_{**} = \psi^{(k)}_*$, \eqref{eq:integrable.technical.lemma.C1alpha.leq.C1} implies
	\begin{equation} \label{eq:integrable.technical.lemma.C1alpha.leq.C1alpha}
		\Vert u^{(k)} - \psi^{(k)}_* \Vert_{C^{1,\theta}_P(\Sigma \times [t, -1]; V)}^2  \lesssim |t| \Vert u^{(k)} - \psi^{(k)}_* \Vert_{C^{1,\theta}_P(\Sigma \times [-2\tau, 0]; V)}^2, \; t \leq -1.
	\end{equation}
	Define
	\[ \beta^{(k)} := \Vert u^{(k)} - \psi^{(k)}_* \Vert_{C^{1,\theta}_P(\Sigma \times [-2\tau, 0]; V)}, \]
	\[ \hat{u}^{(k)} := (\beta^{(k)})^{-1} (u^{(k)} - \psi^{(k)}_*). \]
	Using linear parabolic Schauder theory, the $\hat{u}^{(k)}$ have uniform $C^{2,\theta}_P$ estimates as $k \to \infty$.  By Arzel\`a--Ascoli on \eqref{eq:integrable.technical.lemma.C1alpha.leq.C1alpha}, Fatou's lemma on \eqref{eq:integrable.technical.lemma.L2.int.leq.C1}, and $\beta^{(k)} \to 0$, we see that, after passing to a subsequence, $\hat{u}^{(k)}$ converges locally in $C^{1,\theta}_P$ to a function $\hat{u} : \Sigma \times \RR_- \to V$ which satisfies	
	\begin{equation} \label{eq:integrable.technical.lemma.blowup.limit.C1alpha}
		\Vert \hat{u} \Vert_{C^{1,\theta}_P(\Sigma \times [-2\tau, 0]; V)} = 1,
	\end{equation}
	\begin{equation} \label{eq:integrable.technical.lemma.blowup.limit.pde}
		\tfrac{\partial}{\partial t} \hat{u} = L \hat{u},
	\end{equation}
	\begin{equation} \label{eq:integrable.technical.lemma.blowup.limit.L2}
		\int_{-\infty}^0 \int_\Sigma \Vert \tfrac{\partial}{\partial t} \hat{u} \Vert^2 \, d\mu_g \, dt \lesssim 1,
	\end{equation}
	where $L$ is as in \eqref{eq:jacobi.operator}. It follows from \eqref{eq:integrable.technical.lemma.blowup.limit.pde}, \eqref{eq:integrable.technical.lemma.blowup.limit.L2} that
	\begin{equation} \label{eq:integrable.technical.lemma.blowup.limit.decomposition}
		\hat{u} = \hat{\phi} + \iota_-(\hat{\bm{a}})
	\end{equation}
	for $\hat{\phi} \in \ker L$ and $\bm{a} \in \RR^I$. By the $C^{1,\theta}_P(\Sigma \times [-3\tau, 0]; V)$ convergence $\hat{u}^{(k)} \to \hat{u}$, Claim \ref{clai:tau.zero} below (where $\xi \in (0,1)$ is yet to be determined), and \eqref{eq:integrable.technical.lemma.blowup.limit.C1alpha}:
	\begin{align*}
		& \Vert u^{(k)} - \psi^{(k)}_* - \beta^{(k)} \hat{\phi} \Vert_{C^{1,\theta}_P(\Sigma \times [-3\tau, -\tau]; V)} \\
			& \qquad = \beta^{(k)} \Vert (\beta^{(k)})^{-1}(u^{(k)} - \psi^{(k)}_*) - \hat{\phi} \Vert_{C^{1,\theta}_P(\Sigma \times [-3\tau, -\tau]; V)} \\
			& \qquad = \beta^{(k)} \Vert \hat{u} - \hat{\phi} \Vert_{C^{1,\theta}_P(\Sigma \times [-3\tau, -\tau]; V)} + o(\beta^{(k)}) \\
			& \qquad \leq \xi \beta^{(k)} \Vert \hat{u} - \hat{\phi} \Vert_{C^{1,\theta}_P(\Sigma \times [-2\tau, 0]; V)} + o(\beta^{(k)}) \\
			& \qquad = \xi \beta^{(k)} \Vert (\beta^{(k)})^{-1} (u^{(k)} - \psi^{(k)}_*) - \hat{\phi} \Vert_{C^{1,\theta}_P(\Sigma \times [-2\tau, 0]; V)} + o(\beta^{(k)}) \\
			& \qquad \leq \xi \beta^{(k)} + \xi \beta^{(k)} \Vert \hat{\phi} \Vert_{C^{1,\theta}(\Sigma; V)} + o(\beta^{(k)}).
	\end{align*}
	By elliptic theory, \eqref{eq:integrable.technical.lemma.blowup.limit.C1alpha}, and  \eqref{eq:integrable.technical.lemma.blowup.limit.decomposition} we can choose $\xi$ uniformly so that $\xi + \xi \Vert \hat{\psi} \Vert_{C^{1,\theta}(\Sigma)} \leq \tfrac14$. Thus:
	\[ \Vert u^{(k)} - \psi^{(k)}_* - \beta^{(k)} \hat{\phi} \Vert_{C^{1,\theta}_P(\Sigma \times [-3\tau, -\tau]; V)} \leq (\tfrac14 + o(1)) \beta^{(k)}. \]
	Together with Definition \ref{defi:integrable}, this contradicts \eqref{eq:integrable.technical.lemma.contradiction}.
\end{proof}

\begin{clai} \label{clai:tau.zero}
	Fix $\xi$, $\theta \in (0, 1)$. There exists $\tau > 0$ such that
	\begin{equation} \label{eq:tau.zero.claim}
		\Vert \iota_-(\bm{a}) \Vert_{C^{1,\theta}_P(\Sigma \times [-3\tau, -\tau]; V)} \leq \xi \cdot \Vert \iota_-(\bm{a}) \Vert_{C^{1,\theta}_P(\Sigma \times [-2\tau, 0]; V)},
	\end{equation}
	independently of $\bm{a} \in \RR^I$.
\end{clai}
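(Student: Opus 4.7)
The plan is to exploit the explicit formula
\[ \iota_-(\bm{a})(x,t) = \sum_{j=1}^I a_j e^{-\lambda_j t} \varphi_j(x) \]
from \eqref{eq:iota.minus}, together with the facts that $\lambda_1 \leq \cdots \leq \lambda_I < 0$ and that $\lspan\{\varphi_1, \ldots, \varphi_I\}$ is finite-dimensional. No genuine obstacle is expected: after a short computation, the claim will follow from pure exponential decay dominating polynomial factors in $\tau$.

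For the lower bound on the right-hand side of \eqref{eq:tau.zero.claim}, I use that $0 \in [-2\tau, 0]$ and $\iota_-(\bm{a})(\cdot, 0) = \sum_{j=1}^I a_j \varphi_j$, so the sup-norm part of $\Vert \cdot \Vert_{C^{1,\theta}_P}$ gives
\[ \Vert \iota_-(\bm{a}) \Vert_{C^{1,\theta}_P(\Sigma \times [-2\tau, 0]; V)} \geq \sup_x \Big| \sum_{j=1}^I a_j \varphi_j(x) \Big| \geq c_0 |\bm{a}|, \]
where the last inequality uses the equivalence of norms on the finite-dimensional subspace $\lspan\{\varphi_1, \ldots, \varphi_I\}$ to produce a $c_0 > 0$ uniform in $\bm{a}$.

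For the upper bound on the left-hand side, I note that $-\lambda_j > 0$ makes $t \mapsto e^{-\lambda_j t}$ increasing, so $e^{-\lambda_j t} \leq e^{\lambda_j \tau} \leq e^{\lambda_I \tau}$ for $t \in [-3\tau, -\tau]$ and $j \leq I$. The sup contributions from $|\iota_-(\bm{a})|$ and $|\nabla_x \iota_-(\bm{a})|$ are therefore each bounded by a constant times $|\bm{a}| e^{\lambda_I \tau}$, with the constant depending on $\max_j \Vert \varphi_j \Vert_{C^1(\Sigma; V)}$. For the parabolic Hölder seminorm $[\nabla_x \iota_-(\bm{a})]_{C^\theta_P}$, I split a difference $\nabla_x \iota_-(\bm{a})(x,t) - \nabla_x \iota_-(\bm{a})(y,s)$ into spatial and temporal increments: the spatial increment inherits $[\nabla_x \varphi_j]_{C^\theta}$ multiplied by $e^{\lambda_I \tau}$, while the temporal increment is controlled by the mean-value estimate $|e^{-\lambda_j t} - e^{-\lambda_j s}| \leq |\lambda_j| e^{\lambda_j \tau} |t-s|$, which after dividing by $|t-s|^{\theta/2}$ and using $|t-s| \leq 2\tau$ contributes at most $C \tau^{1-\theta/2} e^{\lambda_I \tau} |\bm{a}|$. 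Summing,
\[ \Vert \iota_-(\bm{a}) \Vert_{C^{1,\theta}_P(\Sigma \times [-3\tau, -\tau]; V)} \leq C_1 (1 + \tau^{1-\theta/2}) e^{\lambda_I \tau} |\bm{a}|, \]
with $C_1$ depending only on $I$, the $\lambda_j$, and $\max_j \Vert \varphi_j \Vert_{C^{1,\theta}(\Sigma; V)}$.

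Combining the two bounds, the ratio in \eqref{eq:tau.zero.claim} is at most $C_1 c_0^{-1} (1 + \tau^{1-\theta/2}) e^{\lambda_I \tau}$, which tends to $0$ as $\tau \to \infty$ because $\lambda_I < 0$. Choosing $\tau$ sufficiently large makes this at most $\xi$, independently of $\bm{a}$, completing the proof.
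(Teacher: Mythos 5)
Your proof is correct and simply fleshes out the paper's one-line argument (which only says the claim is "a straightforward consequence of the exponential decay" in the definition of $\iota_-$). The key steps you use — the lower bound $\Vert \iota_-(\bm{a}) \Vert_{C^{1,\theta}_P(\Sigma \times [-2\tau,0];V)} \gtrsim |\bm{a}|$ via equivalence of norms on the finite-dimensional span of the $\varphi_j$, and the upper bound $\Vert \iota_-(\bm{a}) \Vert_{C^{1,\theta}_P(\Sigma \times [-3\tau,-\tau];V)} \lesssim (1 + \tau^{1-\theta/2}) e^{\lambda_I \tau} |\bm{a}|$ with the temporal Hölder increment handled by the mean value theorem — are exactly what the paper leaves implicit.
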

\begin{proof}
	This is a straightforward consequence of the exponential decay as $t \to -\infty$ in \eqref{eq:iota.minus}.
\end{proof}

\begin{rema} \label{rema:integrable.technical.lemma.area}
	If $\cA(\psi_*) = \cA(0)$ in Lemma \ref{lemm:integrable.technical.lemma}, then we can guarantee that $\cA(\psi_{**}) = \cA(0)$. This follows because all perturbed solutions are produced by Definition \ref{defi:integrable}.
\end{rema}

\begin{prop} \label{prop:integrable}
	Let $0$ be an integrable critical point of $\cA$. Fix $\theta \in (0, 1)$. There exist $\eps$, $c$, $\kappa > 0$ such that if $u : \Sigma \times \RR_- \to \RR$ is a smooth solution of \eqref{eq:negative.gradient.flow} with $\Vert u \Vert_{C^{1,\theta}_P(\Sigma \times \RR_-; V)} < \eps$ and
	\begin{equation} \label{eq:integrable.assumption}
		\lim_{t \to -\infty} \cA(u(\cdot, t)) \leq  \cA(0),
	\end{equation}
	then there exists $\psi_* \in \cM^{1,\theta}(\eps)$ such that
	\begin{equation} \label{eq:integrable.conclusion}
		\Vert u - \psi_* \Vert_{C^{1,\theta,\kappa}_P(\Sigma; V)} \leq c.
	\end{equation}
\end{prop}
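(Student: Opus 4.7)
The plan is to iterate Lemma \ref{lemm:integrable.technical.lemma} backward in time. The key observation is that when the reference critical point $\psi_*$ satisfies $\cA(\psi_*) = \cA(0)$, the energy assumption \eqref{eq:integrable.assumption} forces $[\lim_{t \to -\infty} \cA(u(\cdot, t)) - \cA(\psi_*)]_+ = 0$, so the right-hand side of \eqref{eq:integrable.technical.lemma} collapses to $\tfrac12 \Vert u - \psi_* \Vert_{C^{1,\theta}_P(\Sigma \times [-2\tau, 0]; V)}$ and each application becomes a pure contraction. By Remark \ref{rema:integrable.technical.lemma.area}, the newly produced critical point again satisfies $\cA = \cA(0)$, so this property persists along the iteration, starting from $\psi_* = 0$.

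Let $\tau, \eps_0 > 0$ be the constants produced by Lemma \ref{lemm:integrable.technical.lemma}, and set $I_k := [-(k+2)\tau, -k\tau]$ for $k \geq 0$; note that $I_k \cap I_{k+1}$ has length $\tau$. Choose $\eps$ (in the proposition's statement) small relative to $\eps_0$ and absolute constants to be determined. Set $\psi^{(0)} := 0$. Inductively, given $\psi^{(0)}, \ldots, \psi^{(k)} \in \cM^{1,\theta}(\eps_0)$ with $\cA(\psi^{(j)}) = \cA(0)$, apply Lemma \ref{lemm:integrable.technical.lemma} to the shifted flow $u(\cdot, \cdot - k\tau)$ with reference $\psi^{(k)}$ to produce $\psi^{(k+1)}$ such that
\[ \beta_{k+1} \leq \tfrac12 \beta_k, \qquad \beta_j := \Vert u - \psi^{(j)} \Vert_{C^{1,\theta}_P(\Sigma \times I_j; V)}. \]
Since $\beta_0 \leq \Vert u \Vert_{C^{1,\theta}_P(\Sigma \times \RR_-; V)} < \eps$, it follows that $\beta_k \leq 2^{-k} \eps$ for all $k$.

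On the overlap $I_k \cap I_{k+1}$, the triangle inequality combined with time-independence of the $\psi^{(j)}$ gives
\[ \Vert \psi^{(k+1)} - \psi^{(k)} \Vert_{C^{1,\theta}(\Sigma; V)} \leq \beta_k + \beta_{k+1} \lesssim 2^{-k} \eps, \]
so $\{\psi^{(k)}\}$ is Cauchy in $C^{1,\theta}(\Sigma; V)$ and converges to some $\psi_* \in \cM^{1,\theta}(\eps)$ with $\Vert \psi^{(k)} - \psi_* \Vert_{C^{1,\theta}(\Sigma; V)} \lesssim 2^{-k} \eps$. Combining the two bounds, for $t \in I_k$ one gets
\[ \Vert u(\cdot, t) - \psi_* \Vert_{C^{1,\theta}_P(\Sigma \times [t-1, t]; V)} \lesssim 2^{-k} \lesssim e^{\kappa t}, \qquad \kappa := (\log 2)/\tau, \]
which, by the definition \eqref{eq:exponential.decay.norm}, yields \eqref{eq:integrable.conclusion}.

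The main obstacle is the bookkeeping required to keep the induction uniformly inside the hypotheses of Lemma \ref{lemm:integrable.technical.lemma}: every $\psi^{(k)}$ must stay inside $\cM^{1,\theta}(\eps_0)$, and every shifted flow must have $C^{1,\theta}_P$ norm below $\eps_0$. The latter is automatic since restricting to $\Sigma \times (-\infty, -k\tau]$ does not increase the parabolic norm; the former follows from the geometric summability of $\sum_k 2^{-k} \eps$ once $\eps$ is chosen small enough relative to $\eps_0$.
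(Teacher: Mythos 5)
Your proof is correct and follows essentially the same route as the paper's: iterate Lemma \ref{lemm:integrable.technical.lemma} starting from $\psi^{(0)}=0$, use Remark \ref{rema:integrable.technical.lemma.area} together with \eqref{eq:integrable.assumption} to zero out the energy term so each step is a pure $\tfrac12$-contraction, obtain geometric decay on the overlapping windows $[-(k+2)\tau,-k\tau]$, and pass to the Cauchy limit $\psi_*$. The paper's choice of constants ($\eps=\mu\eps_0$, triangle-inequality factor $\tfrac32$, final replacement of $\eps$ by $\eps/3$) is the same bookkeeping you flag as the main obstacle, handled in the same way.
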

\begin{proof}
	We will iterate Lemma \ref{lemm:integrable.technical.lemma}. On every iteration we can estimate:
	\begin{align} 
		& \Vert \psi_* - \psi_{**} \Vert_{C^{1,\theta}(\Sigma; V)} \nonumber \\
		& \qquad \leq \Vert u(\cdot, -\tau) - \psi_* \Vert_{C^{1,\theta}(\Sigma; V)} + \Vert u(\cdot, -\tau) - \psi_{**}  \Vert_{C^{1,\theta}(\Sigma; V)} \nonumber \\
		& \qquad \leq \tfrac32 \Vert u - \psi_* \Vert_{C^{1,\theta}_P(\Sigma \times [-2\tau, 0]; V)}. \label{eq:integrable.iteration.i}
	\end{align}
	Suppose $\mu \in (0, 1)$ is yet to be determined, and set $\eps := \mu \eps_0$. From Lemma \ref{lemm:integrable.technical.lemma} and   \eqref{eq:integrable.iteration.i}, applied with $\psi_{*,0} := 0$, there exists a critical point $\psi_{*,1}$ such that
	\[ \Vert u - \psi_{*,1} \Vert_{C^{1,\theta}_P(\Sigma \times [-3\tau, -\tau]; V)} \leq \tfrac12  \mu \eps_0, \]
	\[ \Vert \psi_{*,1} - \psi_{*,0} \Vert_{C^{1,\theta}(\Sigma; V)} \leq \tfrac32 \mu \eps_0. \]
	By Remark \ref{rema:integrable.technical.lemma.area}, $\cA(\psi_{*,1}) = \cA(0)$. Iterating indefinitely, we obtain critical points $\psi_{*,k}$ with $\cA(\psi_{*,k}) = \cA(0)$, and 
	\[ \Vert u - \psi_{*,k} \Vert_{C^{1,\theta}_P(\Sigma \times [-(k+2)\tau, -k\tau]; V)} \leq  2^{-k} \mu \eps_0, \]
	\[ \Vert \psi_{*,k} - \psi_{*,k-1} \Vert_{C^{1,\theta}(\Sigma; V)} \leq 3 \cdot 2^{-k} \mu \eps_0. \]
	Using this geometric decay, we find that there exists $\psi_{*,\infty} \in \cM^{1,0}(3\eps)$ as asserted. The result follows with $\eps/3$ in place of $\eps$.
\end{proof}

\begin{rema} \label{rema:integrable.sufficient}
	We list two general sufficient conditions for assumption \eqref{eq:integrable.assumption} in Proposition \ref{prop:integrable} to hold:
	\begin{enumerate}
		\item The critical point $0$ is {\bf nondegenerate}, i.e., $\dim \ker L = 0$. It is then simple to see that there exists $\varepsilon > 0$ such that $\cM^{1,\theta}(\varepsilon) = \{ 0 \}$ (e.g., this follows immediately from the analysis in Section \ref{sec:notation.critical.points}). As a side consequence, the limiting $\psi_*$ is  $\psi_* = 0$.
		\item The integrand $A(x, z, q)$ in \eqref{eq:elliptic.functional} is an {\bf analytic} function of $z$, $q$. It is then easy to see that $\cA_{\operatorname{fin}}$ in Section \ref{sec:notation.critical.points} is constant on a neighborhood of the origin so, by \eqref{eq:critical.point.space}, the left and right hand sides of \eqref{eq:integrable.assumption} are equal. (Assumption \eqref{eq:integrable.assumption} also follows from the much stronger \L{}ojasiewicz--Simon inequality \cite{Simon83}. However, the conclusion of Proposition \ref{prop:integrable} certainly needn't hold if we're not near an integrable critical point; see Appendix \ref{sec:slow.convergence} for examples of arbitrarily slow convergence.)
	\end{enumerate}
\end{rema}


\section{Mean curvature flow} \label{sec:mcf}

\begin{lemm} \label{lemm:mcf.measure.closeness}
	Let $S \subset (M, \overline{g})$ be a  closed and smoothly embedded minimal submanifold. For $\theta \in (0, 1)$, $0 < \sigma < \tfrac12 \tau$, $\eps > 0$, there exists $\delta > 0$ so that if $(\Sigma_t)_{0 \leq t \leq \tau}$ is a mean curvature flow that stays $\delta$-close to $S$ in the sense of measures, i.e., for all $t \in [0, \tau]$, 
	\begin{equation} \label{eq:mcf.measure.closeness} \left| \int_{\Sigma_t} f \, d\mu_{\overline{g} \llcorner \Sigma_t} - \int_S f \, d\mu_{\overline{g} \llcorner S} \right| \leq \delta \Vert f \Vert_{C^0(M)}, \; \forall f \in C^0(M),	
	\end{equation}
	then, for all $t \in [\sigma, 2\sigma]$, $\Sigma_t$ is a graph of some function on $S$ with values in the normal bundle $NS$ and  $C^{2,\theta}(S; NS)$ norm $< \eps$.
\end{lemm}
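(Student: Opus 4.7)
The plan is to argue by contradiction, combining Brakke flow compactness with Brakke's local regularity theorem (in the sharp form due to White). Suppose the conclusion fails: there exist $\theta \in (0, 1)$, $\sigma \in (0, \tfrac12 \tau)$, $\eps > 0$, mean curvature flows $(\Sigma_t^{(k)})_{0 \leq t \leq \tau}$ satisfying \eqref{eq:mcf.measure.closeness} with $\delta = \delta_k \downarrow 0$, and times $t_k \in [\sigma, 2\sigma]$ such that $\Sigma_{t_k}^{(k)}$ is not a normal graph over $S$ with $C^{2,\theta}(S; NS)$ norm $< \eps$.

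The first step is to view each $\Sigma_t^{(k)}$ as a unit-multiplicity integral Brakke flow on $[0, \tau]$ and pass to a subsequential weak limit Brakke flow $(\cM_t)_{0 \leq t \leq \tau}$. The hypothesis \eqref{eq:mcf.measure.closeness} is precisely a $C^0$-dual weak-closeness of varifold measures, so in the limit it forces $\cM_t = \hdmsr^m \llcorner S$ for every $t \in [0, \tau]$, i.e.\ the static Brakke flow induced by $S$, which is admissible because $\mathbf{H}_S \equiv 0$. Equality of the limiting total masses with $\hdmsr^m(S)$ precludes any higher-multiplicity concentration.

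The second step is to invoke White's local regularity theorem, adapted to Brakke flows in Riemannian targets (the Riemannian corrections to Huisken's monotonicity are lower-order and inert at small scales). Since $S$ is closed, smoothly embedded, minimal, and appears with multiplicity one in the limit, every spacetime point of $S \times (0, \tau]$ is a regular point of $(\cM_t)$ with Gaussian density one. White's theorem then upgrades weak convergence to smooth convergence $\Sigma_t^{(k)} \to S$ on compact subsets of $M \times (0, \tau]$, in particular on $M \times [\sigma, 2\sigma]$. Fixing a tubular neighborhood of $S$ with its normal exponential coordinates, smooth convergence forces, for large $k$ and every $t \in [\sigma, 2\sigma]$, a representation $\Sigma_t^{(k)} = \{ \exp_p^\perp(f_t^{(k)}(p)) : p \in S \}$ with $\Vert f_t^{(k)} \Vert_{C^{2,\theta}(S; NS)} \to 0$ uniformly in $t$. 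This contradicts the choice of $t_k$ and closes the argument.

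The main obstacle is verifying the density hypothesis required by White's theorem, namely smallness of Gaussian density ratios of $\Sigma_t^{(k)}$ centered at spacetime points in $S \times [\sigma, 2\sigma]$. The weak measure closeness \eqref{eq:mcf.measure.closeness} controls area against fixed continuous test functions but does not a priori bound local density ratios at arbitrarily small parabolic scales. The resolution is to combine \eqref{eq:mcf.measure.closeness}, applied at times slightly earlier than $t \in [\sigma, 2\sigma]$, with Huisken's monotonicity formula (in its Riemannian version) at a parabolic scale small compared to $\sigma$; the strict positivity $\sigma > 0$ is exactly what gives monotonicity room to regularize the weak measure closeness into pointwise density closeness, enabling the application of White's regularity and the subsequent graphical representation.
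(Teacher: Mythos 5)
Your proof is essentially correct and rests on the same key lemma as the paper, namely White's local regularity theorem \cite{White05}, but it takes a more roundabout route. The paper argues directly rather than by contradiction: it fixes a scale $r_0 < \sqrt{\sigma}$ small enough that $\Theta(\cdot; S, r_0) \leq 1 + \eta$ (possible since $S$ is closed, smooth, and minimal), observes that the Gaussian kernel at scale $r_0$ is itself a $C^0(M)$ test function so that \eqref{eq:mcf.measure.closeness} immediately gives $\Theta(\cdot; \Sigma_t, r_0) \leq 1 + 2\eta$ for all $t$, applies White's theorem to get uniform second fundamental form bounds on $\Sigma_t$ for $t \in [\sigma, 2\sigma]$, and only then runs a short compactness-contradiction argument to deduce the $C^{2,\theta}$-small graphical representation. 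Your version instead wraps the whole statement in a contradiction, passes to a subsequential Brakke flow limit, and then re-derives the density control; this works, but the Brakke compactness step is unnecessary and the detour lengthens what is otherwise the same argument.

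One specific simplification worth flagging: in your ``main obstacle'' paragraph you invoke Huisken monotonicity to upgrade the measure closeness to pointwise density closeness, on the premise that \eqref{eq:mcf.measure.closeness} ``does not a priori bound local density ratios at arbitrarily small parabolic scales.'' You do not need arbitrarily small scales, and you do not need a separate monotonicity argument: White's theorem needs the Gaussian density ratio to be below $1 + \eps_0$ at \emph{one} backward parabolic scale $r_0$ with $r_0^2 < \sigma$, and for such a fixed $r_0$ the density ratio is the integral of a fixed continuous test function, so \eqref{eq:mcf.measure.closeness} controls it directly (with $\delta$ small depending on $r_0$). The strict positivity of $\sigma$ is indeed what lets you choose $r_0$, as you observe, but monotonicity plays no explicit role.
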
	
\begin{proof}
	Let $m$ be the dimension of $S$. Denote Gaussian density ratios for points $x \in M$, a surface $T$, and a scale $r > 0$, by
	\[ \Theta(x; T, r) := (4\pi r^2)^{-m/2} \int_T \exp(-d(x, y)^2/4 r^2) \, d\mu_T(y). \]
	Fix $\eta > 0$. Because $S$ is closed and smoothly embedded, there exists $r_0 \in (0,1)$ such that $\Theta(\cdot; S, r_0) \leq 1 + \eta$ on $M$. Thus:
	\[ \Theta(\cdot ; \Sigma_t, r_0) \leq 1 + \eta + (4 \pi r_0^2)^{-m/2} \delta \leq 1 + 2 \eta, \; t \leq 0, \]
	provided $\delta > 0$ is sufficiently small. White's local regularity theorem \cite{White05} for point with Gaussian density close to one yields uniform estimates on the second fundamental forms of $\Sigma_t$, $t \in [\sigma, 2\sigma]$. The fact that the hypersurfaces $\Sigma_t$ are all graphical over $S$ with small $C^2$ norm follows by a straightforward contradiction argument given that we now know uniform curvature bounds and uniform measure closeness to $S$; the $C^2$ norm is improved to a $C^{2,\theta}$ norm by standard regularity theory.
\end{proof}

Lemma \ref{lemm:mcf.measure.closeness} implies the following results, whose proofs will be given momentarily:

\begin{theo} \label{theo:mcf.characterization}
	Let $S \subset (M, \overline{g})$ be a closed and smoothly embedded minimal submanifold. There exists $\delta > 0$ such that if $(\Sigma_t)_{t \leq 0}$ is a mean curvature flow that stays measure theoretically $\delta$-close to $S$ in the sense of \eqref{eq:mcf.measure.closeness}, and
		\begin{equation} \label{eq:mcf.characterization.assumption}
			\int_{-\infty}^0 \dist_{\overline{g}}(\Sigma, \Sigma_t) \, dt < \infty,
		\end{equation}
		then there exists $\tau \geq 0$ and $\bm{a} \in \RR^I$ such that $(\Sigma_{t-\tau})_{t \leq 0}$ coincides with $\mathscr{S}(\bm{a})$ from Theorem \ref{theo:existence}. 
\end{theo}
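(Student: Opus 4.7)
The plan is to reduce to the argument of Theorem \ref{theo:characterization} by writing $\Sigma_t$ as a normal graph over $S$ for sufficiently negative $t$, and then rerunning the proof from Section \ref{sec:uniqueness} on the resulting quasilinear PDE for the height function. As noted in Remark \ref{rema:mcf.vs.abstract}, parametric MCF is not literally a solution of \eqref{eq:negative.gradient.flow}, so Theorem \ref{theo:characterization} cannot be quoted verbatim; however, its proof relies on the variational structure only at one point, and the MCF setting provides an appropriate substitute.

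First, I would establish the graphical representation. Applying Lemma \ref{lemm:mcf.measure.closeness} on sliding time intervals, the uniform measure $\delta$-closeness yields a threshold $T > 0$ such that for all $t \leq -T$ the slice $\Sigma_t$ is the exponential normal graph $\{\exp_x(u(x,t)) : x \in S\}$ of a section $u(\cdot, t) \in C^{2,\theta}(S; NS)$ of arbitrarily small $C^{2,\theta}$ norm. After a tangential reparametrization of the flow (which does not change the geometric trajectory $t \mapsto \Sigma_t$), this $u$ satisfies a quasilinear parabolic system of the same form as \eqref{eq:characterization.nonlinear}, with $L$ equal to the Jacobi operator of $S$ (whose Morse index is $I$) and with error terms $\mathscr{N}, \mathscr{Q}$ satisfying the structural bounds \eqref{eq:critical.point.equation.error.i}--\eqref{eq:critical.point.equation.error.ii}.

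Second, I would verify the analogues of the hypotheses of Theorem \ref{theo:characterization}. The parabolic $C^{1,\theta}_P$ bound and the spatial $C^1$-smallness follow from Lemma \ref{lemm:mcf.measure.closeness} and parabolic Schauder theory, upon taking $\delta$ sufficiently small. The spacetime $L^1$ bound on $u$ follows from the pointwise comparison
\[ \|u(\cdot,t)\|_{L^1(S; NS)} \leq \vol_g(S) \cdot \|u(\cdot,t)\|_{L^\infty(S; NS)} \lesssim \dist_{\overline g}(S, \Sigma_t), \]
together with the hypothesis \eqref{eq:mcf.characterization.assumption}. Finally, the only place in Section \ref{sec:uniqueness} that uses the gradient-flow structure directly is the derivation of the Caccioppoli-type bound \eqref{eq:characterization.w12.l2} via G\aa{}rding's inequality applied to $\cA$; in the MCF setting, the analogous bound is obtained from the fact that $\mathrm{Area}(\Sigma_t)$ is nonincreasing, combined with the second-variation expansion $\mathrm{Area}(\mathrm{graph}\,u) - \mathrm{Area}(S) \gtrsim \|\nabla_g u\|_{L^2}^2 - \|u\|_{L^2}^2$ that holds for $u$ of small $C^2$ norm. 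With \eqref{eq:characterization.w12.l2} in place, the remainder of the argument of Section \ref{sec:uniqueness} --- the Merle--Zaag ODE analysis of unstable/neutral/stable mode projections (Claims \ref{clai:characterization.pminus.dominant}--\ref{clai:characterization.peel.irrelevant.terms}), the sharp decay $\sigma(t) \lesssim e^{-\lambda_{I^*} t}$, and the terminal identification with Theorem \ref{theo:existence} via the uniqueness half of the contraction-mapping argument --- goes through unchanged and produces $\tau' \geq 0$ and $\bm{a} \in \RR^I$ so that $u(\cdot, t - \tau')$ is the normal height of $\mathscr{S}(\bm{a})$. The theorem follows with $\tau := T + \tau'$.

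The main technical obstacle is executing the reduction from parametric MCF to the graphical PDE cleanly, verifying that this PDE has the correct quasilinear structure \eqref{eq:critical.point.equation.error.i}--\eqref{eq:critical.point.equation.error.ii} despite not being in divergence form, and checking that the area-monotonicity substitute indeed recovers \eqref{eq:characterization.w12.l2} with the same constants. Once these points are dispatched, everything downstream is formal, since the dynamical analysis in Section \ref{sec:uniqueness} was designed precisely to avoid further appeals to the variational framework.
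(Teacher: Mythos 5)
Your proposal is correct and follows essentially the same route as the paper: reduce to the graphical nonparametric equation via Lemma \ref{lemm:mcf.measure.closeness}, rerun the \emph{proof} (not the statement) of Theorem \ref{theo:characterization}, and observe that the only place the variational equation \eqref{eq:negative.gradient.flow} enters is in establishing \eqref{eq:characterization.w12.l2}, which in the MCF setting is recovered from monotonicity of area under the flow together with the second-variation (G\aa{}rding) expansion near the minimal submanifold $S$. The paper's write-up is terser but your elaboration --- including the $L^1$ comparison $\Vert u(\cdot,t)\Vert_{L^1} \lesssim \dist_{\overline g}(S,\Sigma_t)$ and the note on tangential reparametrization to put the graphical equation in the form \eqref{eq:characterization.nonlinear} --- fills in exactly the details the paper leaves implicit.
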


By virtue of the a posteriori $C^{2,\theta}$ bound in Lemma \ref{lemm:mcf.measure.closeness}, the distance function in \eqref{eq:mcf.characterization.assumption} can be one of several equivalent distance type functions, but for simplicity we take the supremum distance.

\begin{prop} \label{prop:mcf.integrable}
	Let $S \subset (M, \overline{g})$ be an integrable, closed, smoothly embedded minimal surface. There exists $c, \delta, \kappa > 0$ such that if $(\Sigma_t)_{t \leq 0}$ is an ancient mean curvature flow which is measure theoretically $\delta$-close to $S$ in the sense of \eqref{eq:mcf.measure.closeness}, and
	\begin{equation} \label{eq:mcf.integrable.assumption}
		\lim_{t \to -\infty} \operatorname{Area}_{\overline{g}}(\Sigma_t) \leq \operatorname{Area}_{\overline{g}}(\Sigma),
	\end{equation}
	then there exists a possibly different closed, smoothly embedded minimal surface $S_*$ such that $\Sigma_t$, $t \leq -1$, is a graph of some function on $S_*$ with values in the normal bundle $NS_*$ and $C^{2,\theta}(S_*; NS_*)$ norm $< c e^{\kappa t}$.
\end{prop}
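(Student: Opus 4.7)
The plan is to reduce Proposition \ref{prop:mcf.integrable} to (a minor modification of) Proposition \ref{prop:integrable}, applied to a graphical reformulation of the flow. The first step is to convert measure closeness into graphical closeness: apply Lemma \ref{lemm:mcf.measure.closeness} on each parabolic slab $[t-1,t+1]$ with $t \leq -1$. Provided $\delta$ is small enough, $\Sigma_t$ is the normal graph of a smooth section $u(\cdot,t)$ of $NS \to S$, with $\Vert u(\cdot,t) \Vert_{C^{2,\theta}(S;NS)}$ uniformly small; interior parabolic regularity then gives a uniform bound on the parabolic H\"older norm $\Vert u \Vert_{C^{1,\theta}_P(S \times (-\infty,-1]; NS)}$.

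Next, set up the elliptic functional $\cA(f) := \operatorname{Area}_{\overline{g}}(\graph_S f)$ on small sections $f$ of $NS$. This $\cA$ is of the form \eqref{eq:elliptic.functional} with integrand satisfying the Legendre--Hadamard condition, and its critical points near $0$ correspond to minimal surfaces graphical over $S$. Under this dictionary, integrability of $S$ corresponds to $0$ being an integrable critical point of $\cA$ in the sense of Definition \ref{defi:integrable}, and the area hypothesis \eqref{eq:mcf.integrable.assumption} becomes precisely \eqref{eq:integrable.assumption}.

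As Remark \ref{rema:mcf.vs.abstract} cautions, one cannot directly apply Proposition \ref{prop:integrable}: classical MCF is parametric and moves only in the normal direction, so the evolution of $u$ differs from the nonparametric $L^2$-gradient flow of $\cA$ by a tangential reparametrization. However, inspection of the proof of Proposition \ref{prop:integrable} (which reduces to iterating Lemma \ref{lemm:integrable.technical.lemma}) reveals that it uses only: the spacetime $L^2$ bound $\int \int \Vert \partial_t u \Vert^2 \lesssim \cA(u(\cdot,-\infty)) - \cA(u(\cdot,0))$; linear parabolic Schauder estimates for the quasilinear system satisfied by $u - \psi_*$; and integrability to supply a perturbation family tangent to $\ker L$. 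Each of these survives in the MCF setting: the first because only the normal velocity $=\mathbf{H}$ contributes to area change, the other two because tangential reparametrization does not alter the structure of the graphical quasilinear parabolic PDE. Re-running Lemma \ref{lemm:integrable.technical.lemma} and the geometric iteration of Proposition \ref{prop:integrable} then produces some $\psi_* \in \cM^{1,\theta}(\eps)$ with $\Vert u - \psi_* \Vert_{C^{1,\theta,\kappa}_P(S \times \RR_-; NS)} \leq c$.

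Finally, $\psi_*$ defines a closed, smoothly embedded minimal surface $S_* := \graph_S \psi_*$. Rewriting $\Sigma_t$ as a graph over $S_*$ via the normal exponential in a tubular neighborhood of $S_*$ preserves the exponential $C^{1,\theta}_P$ decay, and parabolic Schauder theory applied to the quasilinear equation satisfied by the new graphical function upgrades $C^{1,\theta}$ decay to $C^{2,\theta}$ decay at the cost of shrinking $\kappa$ and enlarging $c$. The main obstacle is the content of the third paragraph: verifying that the tangential reparametrization discrepancy between parametric MCF and the nonparametric gradient flow of $\cA$ does not disrupt the delicate blow-up and contradiction argument inside Lemma \ref{lemm:integrable.technical.lemma}. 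This requires a careful audit, per the signposts of Section \ref{sec:mcf}, rather than a direct invocation.
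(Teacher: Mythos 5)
Your proposal is correct and follows essentially the same route as the paper: reduce to the graphical setting via Lemma~\ref{lemm:mcf.measure.closeness}, observe that the only place the variational identity \eqref{eq:negative.gradient.flow} enters the proof of Proposition~\ref{prop:integrable} is the spacetime $L^2$ bound \eqref{eq:integrable.technical.lemma.L2.int.leq.area.diff}, and re-derive that bound from the first variation formula since $\Vert \partial_t u\Vert^2$ is comparable to $\Vert \mathbf{H}_{\Sigma_t}\Vert^2$ when one is uniformly $C^1$-close to $S$. Your closing remarks on regraphing over $S_*$ and the Schauder upgrade from $C^{1,\theta}$ to $C^{2,\theta}$ are standard details the paper leaves implicit, so the argument matches.
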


We remind the reader that Remark \ref{rema:integrable.sufficient} describes situations where the area condition of Proposition  \ref{prop:mcf.integrable} is met.

We now turn to the proof of Theorem  \ref{theo:mcf.characterization} and Proposition \ref{prop:mcf.integrable}.  We would like to apply Theorem \ref{theo:characterization} and Proposition \ref{prop:integrable}. Unfortunately, mean curvature flow is \emph{not} the gradient flow, in the sense of \eqref{eq:negative.gradient.flow}, for the elliptic \emph{nonparametric} area functional; see Remark \ref{rema:mcf.vs.abstract}. It is, however, an evolution of the form \eqref{eq:characterization.nonlinear} for the $L$, $\mathscr{N}$, $\mathscr{Q}$ that come from the nonparametric area functional. Therefore, Theorem \ref{theo:characterization} and Proposition \ref{prop:integrable} will apply provided we ensure the validity of all steps where the variational implications \eqref{eq:negative.gradient.flow} were used, and not just the general evolution \eqref{eq:characterization.nonlinear}.

\begin{proof}[Proof of Theorem \ref{theo:mcf.characterization}]
	By Lemma \ref{lemm:mcf.measure.closeness}, we can write the $\Sigma_t$ as small $C^{2,\theta}$ graphs of $u(\cdot, t)$, where $u : S \times \RR_- \to NS$ is a normal bundle valued solution of the nonparametric mean curvature flow equation. So, we seek to apply the proof of Theorem \ref{theo:characterization}. Inspecting the proof, we see that the negative gradient flow equation \eqref{eq:negative.gradient.flow} was only used to derive \eqref{eq:characterization.w12.l2}, which nevertheless continues to hold for our parametric gradient flow, as we are $C^1$-near a minimal submanifold. The remainder of the proof applies verbatim.
\end{proof}

\begin{proof}[Proof of Proposition \ref{prop:mcf.integrable}]
		By Lemma \ref{lemm:mcf.measure.closeness}, we can write the $\Sigma_t$ as small $C^{2,\theta}$ graphs of $u(\cdot, t)$, where $u : S \times \RR_- \to NS$ is a normal bundle valued solution of the nonparametric mean curvature flow equation. So, we seek to apply the proof of Proposition \ref{prop:integrable}. Inspecting the proof, we see that the negative gradient flow equation \eqref{eq:negative.gradient.flow} was only used to derive \eqref{eq:integrable.technical.lemma.L2.int.leq.area.diff}, which nevertheless continues to hold, since $\Vert \tfrac{\partial}{\partial t} u \Vert^2 \, d\mu_g$ is bounded by a \emph{fixed constant} times $\Vert \mathbf{H}_{\Sigma_t} \Vert^2 \, d\mu_{\overline{g} \llcorner \Sigma_t}$, as we are $C^1$-near a fixed submanifold. Thus, by the first variation formula,
		\begin{align*}
			\int_{-\infty}^0 \int_S \Vert \tfrac{\partial}{\partial t} u^{(k)}(\cdot, t) \Vert^2 \, d\mu_g \, dt 
			& \lesssim \int_{-\infty}^0 \int_S \Vert \mathbf{H}_{\Sigma_t} \Vert^2 \, d\mu_{\overline{g}  \llcorner \Sigma_t} \, dt \\
			& \lesssim \int_{-\infty}^0 \tfrac{d}{dt} \operatorname{Area}_{\overline{g}}(\Sigma_t) \, dt \\
			& = \lim_{t \to -\infty} \operatorname{Area}_{\overline{g}}(\Sigma_t) - \operatorname{Area}_{\overline{g}}(\Sigma_0).
		\end{align*}
		The remainder of the proof applies verbatim.
\end{proof}

Before proving our Allard-type characterization of ancient mean curvature flows in the sphere, we prove the following toy result:

\begin{prop} \label{prop:mcf.hemisphere}
	Let $(\Sigma_t)_{t \leq 0}$ be an ancient mean curvature flow of hypersurfaces embedded in a round hemisphere $\overline{\SS^n_+}$. If
	\begin{equation} \label{eq:mcf.hemisphere.assumption}
		\lim_{t \to -\infty} \operatorname{Area}(\Sigma_t) < 2 \operatorname{Area}(\SS^{n-1}),
	\end{equation}
	then $(\Sigma_t)_{t \leq 0}$ is the  steady $\partial \SS^n_+$ or spheres of latitude flowing out of it.
\end{prop}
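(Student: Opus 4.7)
The plan is to show that under \eqref{eq:mcf.hemisphere.assumption} the flow $(\Sigma_t)_{t\leq 0}$ converges as a unit-multiplicity varifold to the equator $\partial\SS^n_+$ as $t\to-\infty$, and then to invoke Proposition \ref{prop:mcf.integrable} and Theorem \ref{theo:mcf.characterization} with $S = \partial\SS^n_+$ to identify the flow among those produced by Theorem \ref{theo:existence}.

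First I would establish varifold convergence to $\partial\SS^n_+$ with multiplicity one. Monotonicity of area along mean curvature flow gives $\operatorname{Area}(\Sigma_t) \leq \lim_{s\to-\infty}\operatorname{Area}(\Sigma_s) < 2\operatorname{Area}(\SS^{n-1})$. By Brakke/Ilmanen compactness together with Huisken monotonicity, any sequence $t_k \to -\infty$ admits a subsequential limit of $\Sigma_{t_k}$ that is a stationary integral $(n-1)$-varifold in $\overline{\SS^n_+}$ of mass at most this limit. No such varifold can be supported in the open hemisphere: the foliation of $\SS^n_+$ by spheres of latitude about the pole is strictly mean-convex, and applying the strong maximum principle at the point of support farthest from the pole yields a contradiction. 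Hence the support touches $\partial\SS^n_+$ and, by another application of the strong maximum principle together with constancy of multiplicity, the limit must be a positive-integer multiple of $\partial\SS^n_+$. The strict bound \eqref{eq:mcf.hemisphere.assumption} then forces multiplicity one. Since every subsequential limit is the same, $\Sigma_t \to \partial\SS^n_+$ as varifolds, hence measure-theoretically in the sense of \eqref{eq:mcf.measure.closeness} for $t$ sufficiently negative.

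Lemma \ref{lemm:mcf.measure.closeness} now promotes measure closeness to $C^{2,\theta}$-graphicality of $\Sigma_t$ over $\partial\SS^n_+$, with arbitrarily small norm as $t\to-\infty$. The Jacobi operator of the totally geodesic equator is $L = \Delta_{\SS^{n-1}} + (n-1)$, whose kernel is spanned by the restrictions to $\SS^{n-1}$ of the linear coordinate functions on $\RR^{n+1}$; these Jacobi fields integrate to the smooth $n$-parameter family of rotated (hence totally geodesic, hence equal-area) equators, so $\partial\SS^n_+$ is integrable in the sense of Definition \ref{defi:integrable}. With \eqref{eq:mcf.integrable.assumption} supplied by area monotonicity, Proposition \ref{prop:mcf.integrable} yields exponential $C^{2,\theta}$-closeness of $\Sigma_t$ to some (possibly rotated) equator $S_*$; uniqueness of the varifold limit forces $S_* = \partial\SS^n_+$. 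The exponential decay then automatically supplies \eqref{eq:mcf.characterization.assumption}, so Theorem \ref{theo:mcf.characterization} produces $\tau \geq 0$ and $\bm{a} \in \RR^I$ with $(\Sigma_{t-\tau})_{t\leq 0}$ coinciding with $\mathscr{S}(\bm{a})$ from Theorem \ref{theo:existence}. Since $-L$ has Morse index $I = 1$ (the unique unstable mode is the constant, with eigenvalue $-(n-1)$), the family is parametrized by $\bm{a} \in \RR$, and geometrically consists of the ancient flows of spheres of latitude along the axis of $\SS^n_+$; the hemisphere constraint selects the pole of $\SS^n_+$ as the shrinking target.

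The main technical obstacle is the varifold convergence step: showing that every subsequential weak limit as $t\to-\infty$ is the equator with multiplicity one. This rests on the hemisphere geometry (which rules out closed minimal hypersurfaces inside the open $\SS^n_+$ via the maximum principle against spheres of latitude) and on the strict area bound \eqref{eq:mcf.hemisphere.assumption} (which rules out higher multiplicities).
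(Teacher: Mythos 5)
Your overall strategy is the same as the paper's: establish that $\Sigma_t$ converges as $t \to -\infty$ to $\partial\SS^n_+$ with multiplicity one, invoke integrability (Proposition \ref{prop:mcf.integrable}) to get exponential decay, and then apply Theorem \ref{theo:mcf.characterization} together with the Morse index computation for the equator. The latter part of your argument matches the paper and is correct.

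However, there is a gap in the varifold classification step. Your strong maximum principle argument against the mean-convex foliation by spheres of latitude correctly rules out $\support\Vert V\Vert$ lying entirely in the \emph{open} hemisphere: if $c_0 := \max_{\support V}\theta < \pi/2$, then $V$ touches $\{\theta = c_0\}$ from the side its mean curvature vector points into, a contradiction. And another application of the (minimal--minimal) strong maximum principle gives $\support\Vert V\Vert \supset \partial\SS^n_+$. But neither of these shows the \emph{containment} $\support\Vert V\Vert \subset \partial\SS^n_+$, and without it the constancy theorem cannot be applied to conclude $V = k\,[\partial\SS^n_+]$. Indeed, the latitude-foliation barrier fails in the relevant direction: if $c_0 := \min_{\support V}\theta < \pi/2$, the touching at $\{\theta=c_0\}$ has the mean curvature vector pointing \emph{away} from $V$, which is not the contradictory configuration. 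A priori $V$ could carry additional mass in the open hemisphere. The paper closes this gap cleanly by testing the first variation against the conformal Killing field normal to $\partial\SS^n_+$ (equivalently, using that the ambient coordinate $x_{n+1}$ satisfies $\Delta_V x_{n+1} = -m\,x_{n+1}$, so $\int x_{n+1}\,d\Vert V\Vert = 0$, forcing $x_{n+1}\equiv 0$ on $\support\Vert V\Vert$). You should replace, or supplement, your maximum-principle step with that first-variation argument; once $\support\Vert V\Vert \subset \partial\SS^n_+$ is in hand, the constancy theorem and the strict area bound \eqref{eq:mcf.hemisphere.assumption} give multiplicity one as you say, and the remainder of your proof goes through.
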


It would be interesting to know whether \eqref{eq:mcf.hemisphere.assumption} can be relaxed.

\begin{proof}[Proof of Proposition \ref{prop:mcf.hemisphere}]
	We seek to employ Theorem   \ref{theo:mcf.characterization} and Proposition \ref{prop:mcf.integrable} with $S = \partial \SS^n_+$. Indeed, $I = \ind(L) = 1$ on $\partial \SS^n_+$ by \cite[Proposition 5.1.1]{Simons68}, and it is trivial (e.g., by direct construction) to see that this one-parameter family of ancient flows corresponds to one of spheres of latitude.
	
	\begin{clai} \label{clai:mcf.hemisphere.limit}
		$\lim_{t \to -\infty} \Sigma_t = \partial \SS^n_+$ in the sense of measures.
	\end{clai}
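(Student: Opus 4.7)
The plan is to use area monotonicity to extract stationary varifold limits along any sequence $t_k \to -\infty$ via Brakke compactness, and then to classify these limits using a simple height-function computation.

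Mean curvature flow is the negative gradient flow of area, so $A(t) := \operatorname{Area}(\Sigma_t)$ is non-increasing and, by the hypothesis, bounded above. It therefore admits a finite limit $A_{-\infty} < 2 \operatorname{Area}(\SS^{n-1})$. The first variation formula yields
\[
    \int_{-\infty}^0 \int_{\Sigma_t} |\mathbf{H}|^2 \, d\mu_{\overline{g} \llcorner \Sigma_t} \, dt = A_{-\infty} - A(0) < \infty,
\]
so for any sequence $t_k \to -\infty$, the tails $\int_{t_k - 1}^{t_k + 1} \int_{\Sigma_t} |\mathbf{H}|^2 \, d\mu_{\overline{g} \llcorner \Sigma_t} \, dt$ vanish. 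By Brakke compactness for integral Brakke flows of uniformly bounded mass, a subsequence of the time-translated flows $(\Sigma_{t_k + s})_{s \in [-1, 1]}$ converges to an integral Brakke flow whose mass is constant in time (since the limit mean curvature energy vanishes) and is at most $A_{-\infty}$. Hence the slice at $s = 0$ is a stationary integral $(n-1)$-varifold $V$ in $\SS^n$, supported in $\overline{\SS^n_+}$, with $\|V\| \leq A_{-\infty}$.

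To classify $V$, I would test stationarity against the vector field $X := \nabla^{\SS^n} \rho$, where $\rho(x) := x_{n+1}$ is the height function. Because $\rho$ is a first Laplace eigenfunction on $\SS^n$, its Hessian satisfies $\operatorname{Hess}^{\SS^n} \rho = -\rho \, g_{\SS^n}$, hence for any $(n-1)$-plane $P \subset T\SS^n$ one has $\operatorname{div}_P X = -(n-1)\rho$. The first variation identity then gives $\int \rho \, d\|V\| = 0$; since $\rho \geq 0$ on $\overline{\SS^n_+} \supset \operatorname{spt}(V)$, the support of $V$ is contained in $\partial \SS^n_+$. Integrality forces $V = m[\partial \SS^n_+]$ for some $m \in \ZZ_{\geq 1}$, and the strict mass bound $\|V\| < 2 \operatorname{Area}(\SS^{n-1})$ forces $m = 1$.

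Because every sequence $t_k \to -\infty$ yields the same unique subsequential limit $[\partial \SS^n_+]$, full convergence $\Sigma_t \to \partial \SS^n_+$ in the sense of Radon measures follows. The main technical point is invoking Brakke compactness on the time-translated flows so as to guarantee both integrality and stationarity of the limit at $s = 0$; once this is in hand, the height-function argument and the mass bound pin down the limit uniquely.
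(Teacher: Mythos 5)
Your proof is correct and follows essentially the same route as the paper's: Brakke compactness on the time-translated flows yields a stationary integral varifold limit supported in the closed hemisphere, and the height-function vector field $\nabla^{\SS^n} x_{n+1}$ that you test stationarity against is precisely the conformal Killing field normal to $\partial\SS^n_+$ that the paper invokes, after which the constancy theorem and the mass bound pin down $V=\partial\SS^n_+$ with multiplicity one. Your write-up simply makes the vector-field computation explicit, which the paper leaves as a remark.
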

	\begin{proof}[Proof of claim]
		Consider any sequence $t_i \to -\infty$ and the sequence of translated flows $\Sigma^{(i)}_t := \Sigma_{t+t_i}$. By Brakke's compactness theorem and the uniform boundedness of areas, $(\Sigma^{(i)}_t)_t$ converges subsequentially to an integral eternal Brakke flow with constant area equal to $\lim_{t \to -\infty} \operatorname{Area}(\Sigma_t)$. Since the area is constant, the integral Brakke flow is supported on a stationary integral varifold $V$, with $\support \Vert V \Vert \subset \overline{\SS^n_+}$. It is easy to see that all such varifolds will, in fact, satisfy $\support \Vert V \Vert \subset \partial \SS^n_+$ (use, e.g., the conformal Killing field normal to $\partial \SS^n_+$). By the constancy theorem for integral varifolds \cite{SimonGMT} and \eqref{eq:mcf.hemisphere.assumption} it follows that $V$ is $\partial \SS^n_+$ with multiplicity one.
	\end{proof}
	
	Claim \ref{clai:mcf.hemisphere.limit} and Proposition \ref{prop:mcf.integrable}, together, show that $\Sigma_t \to S$ exponentially as $t \to -\infty$. We are using the well-known fact that equatorial $\SS^m \subset \SS^n$ are integrable (see \cite[Proposition 5.1.1]{Simons68} for the dimension of the space of Jacobi fields, which trivially matches the space of $\SS^m$'s generated by symmetries). Therefore, Theorem  \ref{theo:mcf.characterization} yields the result, since the Morse index of an equatorial $\SS^m \subset \SS^n$ is $n-m$ \cite[Proposition 5.1.1]{Simons68}.
\end{proof}

\begin{lemm} \label{lemm:mcf.analytic.uniqueness}
	Let $(\Sigma_t)_{t \leq 0}$ be an ancient mean curvature flow of closed submanifolds in a real analytic manifold $(M, \overline{g})$. Suppose that there exists a closed minimal submanifold $S_0$ and times $t_i \to -\infty$ so that $\lim_i \Sigma_{t_i} = S_0$ in $C^{2,\theta}$. For all small $\eta > 0$, there exists $\tau \geq 0$ so that $\Sigma_t$ is $\eta$-close to $S_0$, in $C^{2,\theta}(S_0; NS_0)$,  for all $t \leq -\tau$.
\end{lemm}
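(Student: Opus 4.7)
My proof strategy is to run a Łojasiewicz–Simon argument in the ancient (backward) setting, leveraging the real analyticity of the metric. Since $(M, \overline g)$ is analytic, the nonparametric area functional $\cA(u) = \operatorname{Area}(\graph(u))$ on normal $C^{2,\theta}$ graphs over $S_0$ is a real analytic functional, so there exist $\sigma, C > 0$ and $\theta' \in (0, 1/2]$ (Simon's Łojasiewicz inequality, \cite{Simon83}) such that
\[
|\cA(u) - \cA(0)|^{1 - \theta'} \leq C \Vert \cH(u) \Vert_{L^2(S_0)}
\]
for every $\Vert u \Vert_{C^{2,\theta}(S_0; NS_0)} < \sigma$. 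Monotonicity of area under MCF combined with $\Sigma_{t_i} \to S_0$ in $C^{2,\theta}$ gives $\operatorname{Area}(\Sigma_t) \nearrow \operatorname{Area}(S_0)$ as $t \to -\infty$, with $\operatorname{Area}(\Sigma_t) \leq \operatorname{Area}(S_0)$ for $t$ sufficiently negative.

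On any time interval $[a, b]$ on which the flow is a normal graph $u(\cdot, t)$ with $\Vert u(\cdot, t) \Vert_{C^{2,\theta}} < \sigma$ throughout, the gradient flow identity $\tfrac{d}{dt}\operatorname{Area}(\Sigma_t) = -\int |\mathbf{H}|^2 \asymp -\Vert \cH(u(\cdot, t)) \Vert_{L^2}^2$ (parametric and nonparametric normal velocities differ by a bounded factor near the zero graph) combines with the Łojasiewicz inequality to produce
\[
\int_a^b \Vert \partial_t u(\cdot, t) \Vert_{L^2(S_0)} \, dt \lesssim (\operatorname{Area}(S_0) - \operatorname{Area}(\Sigma_b))^{\theta'} - (\operatorname{Area}(S_0) - \operatorname{Area}(\Sigma_a))^{\theta'},
\]
which tends to zero as $a, b \to -\infty$ with $a \leq b$. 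Thus $u(\cdot, t)$ is $L^2$-Cauchy as $t \to -\infty$ along the Łojasiewicz region.

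Next, I argue by contradiction. Suppose the conclusion fails: there exist $\eta \in (0, \sigma)$ and $s_j \to -\infty$ with $\Sigma_{s_j}$ not $\eta$-close to $S_0$ in $C^{2,\theta}$. For each $j$, pick $t_{i_j} > s_j$ with $\Vert u(\cdot, t_{i_j}) \Vert_{C^{2,\theta}} < \eta/4$ (necessarily $t_{i_j} \to -\infty$), and let $r_j \in [s_j, t_{i_j}]$ be the largest time with $\Vert u(\cdot, r_j) \Vert_{C^{2,\theta}} = \eta/2$. Then the interval $(r_j, t_{i_j}]$ lies inside the $\sigma$-Łojasiewicz region. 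Since $r_j \to -\infty$, the $L^2$-Cauchy property and $u(\cdot, t_{i_j}) \to 0$ in $C^{2,\theta} \subset L^2$ give $u(\cdot, r_j) \to 0$ in $L^2$.

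The last step — upgrading $L^2$-smallness of $u(\cdot, r_j)$ to $C^{2,\theta}$-smallness, contradicting $\Vert u(\cdot, r_j) \Vert_{C^{2,\theta}} = \eta/2$ — is the main technical hurdle. Parabolic Schauder interior estimates, applied to the quasilinear MCF equation with uniform $C^{2,\theta}$ control $\leq \eta/2 < \sigma$ on $[r_j, t_{i_j}]$, yield uniform $C^{k,\theta}_P$ bounds (any $k$) on the interior subinterval $[r_j + 1, t_{i_j}]$; the degenerate case of bounded $t_{i_j} - r_j$ can be handled separately via short-time $C^{2,\theta}$-valued continuity of the flow. At time $r_j + 1$, $L^2$-smallness combined with uniform $C^{k,\theta}$ bounds forces $u(\cdot, r_j + 1) \to 0$ in $C^{2,\theta}$ by interpolation. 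Propagating back to $r_j$, the interpolated pointwise estimate $\Vert \partial_t u \Vert_{C^{2,\theta}} \lesssim \Vert \partial_t u \Vert_{C^{k,\theta}}^{1-\beta} \Vert \partial_t u \Vert_{L^2}^{\beta}$, time-integrated over $[r_j, r_j + 1]$ and controlled by the vanishing area deficit at $r_j$, closes the contradiction and proves the lemma.
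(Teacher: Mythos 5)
Your strategy matches the paper's in spirit (Łojasiewicz--Simon inequality driving an $L^1(L^2)$ Cauchy estimate for $\partial_t u$, a Schauder estimate to upgrade $L^2$-smallness to $C^{2,\theta}$-smallness, and a contradiction at a boundary time of the good region), but there is a genuine gap in the last step due to the choice of the ``critical time'' $r_j$.

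You define $r_j$ as the \emph{last entry} time: the largest time $\leq t_{i_j}$ with $\Vert u(\cdot,r_j)\Vert_{C^{2,\theta}}=\eta/2$, so that the good region $(r_j, t_{i_j}]$ lies to the \emph{future} of $r_j$. Interior parabolic Schauder estimates use past data to control present regularity, so they give improved $C^{k,\theta}_P$ bounds on $[r_j+1, t_{i_j}]$ but say nothing beyond the ambient $C^{2,\theta}$ bound on $[r_j, r_j+1]$ itself. Consequently the interpolation $\Vert \partial_t u \Vert_{C^{2,\theta}} \lesssim \Vert \partial_t u \Vert_{C^{k,\theta}}^{1-\beta}\Vert \partial_t u \Vert_{L^2}^{\beta}$ cannot be integrated over $[r_j, r_j+1]$, because $\partial_t u$ has no $C^{k,\theta}$ bound there --- only a $C^{0,\theta}$ bound coming from the $C^{2,\theta}$ control on $u$ through the quasilinear equation. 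So ``propagating back to $r_j$'' is not justified, and the contradiction is not closed. The ``degenerate case $t_{i_j}-r_j$ bounded'' is also not convincingly dispatched: short-time continuity of the flow in $C^{2,\theta'}$ for $\theta'<\theta$ together with $L^2$-smallness only forces $C^{2,0}$-smallness at $r_j$, which is compatible with $[D^2 u(\cdot,r_j)]_{\theta}$ remaining of order $\eta$.

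The paper avoids this entirely by defining the critical time as the \emph{first escape} time $t_i'$ going forward from a good time $t_i$. Then the good region $[t_i, t_i')$ lies to the \emph{past} of $t_i'$, and (after establishing $t_i'-t_i\to\infty$, a stability consequence of $\Sigma_{t_i}\to S_0$ with $S_0$ minimal) the forward $C^{2,\theta}$-from-$L^2$ Schauder estimate on $[t_i'-1,t_i']$ applies directly to bound $\Vert u(\cdot,t_i')\Vert_{C^{2,\theta}}$, contradicting $=\eta/2$ once $L^2$-smallness on $[t_i,t_i']$ is established. Your bad times $s_j$ sit \emph{before} your good times $t_{i_j}$; to salvage the argument, instead pick a good time $t_{i_j}' < s_j$ (possible since $\Sigma_{t_i}\to S_0$ along $t_i \to -\infty$), let $r_j'$ be the first escape time $>t_{i_j}'$, note $r_j' \leq s_j \to -\infty$, and run the Schauder estimate on $[r_j'-1, r_j']$ as the paper does.
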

\begin{proof}
	We seek to invoke, backward in time, the \L{}ojasiewicz--Simon inequality \cite[Theorem 3]{Simon83} on the real analytic manifold $(M, \overline{g})$. Recall that its content is that there exist $\eta_0 > 0$, $\mu \in (0, 1)$ (depending on $S_0$, $M$, $\overline{g}$) such that if $f : S_0 \to NS_0$ has $\Vert f \Vert_{C^{2,\theta}(S_0; NS_0)} < \eta_0$, then
	\begin{equation} \label{eq:mcf.allard.lojasiewicz.simon}
	\Vert \mathbf{H}(f) \Vert_{L^2(S_0; NS_0)} \geq |\operatorname{Area}(S_0) - \operatorname{Area}(f)|^{1-\mu}.
	\end{equation}
	Here we're writing $\mathbf{H}(f)$ and $\operatorname{Area}(f)$ for the mean curvature vector and area of the graph of $f$. 
	
	Without loss of generality, $\eta < \eta_0$. For each $i$, let $t_i'$ be the first time $> t_i$ at which $\Sigma_t$ cannot be written as a graph $u(\cdot, t)$ over $S_0$ with $\Vert u(\cdot, t) \Vert_{C^{2,\theta}(S_0; NS_0)} < \tfrac12 \eta$, or $t_i' = 0$ if no such time exists. Clearly, $t_i'$ is nonincreasing in $i$. Our lemma is equivalent to showing
	\begin{equation} \label{eq:mcf.allard.time.intervals}
		\lim_i t_i' > -\infty.
	\end{equation} 
	Assume  \eqref{eq:mcf.allard.time.intervals} is false. In that case, we first show that:
	\begin{equation} \label{eq:mcf.allard.L2.bound}
		\liminf_i \int_{t_i}^{t_i'} \Vert \tfrac{\partial}{\partial t} u(\cdot, t) \Vert_{L^2(S_0; NS_0)} > 0.
	\end{equation}
	If \eqref{eq:mcf.allard.L2.bound} were false, then $\lim_i \Sigma_{t_i} =  S_0$ in $C^{2,\theta}$ (and thus $L^2$) would imply 
	\[ \lim_i \sup_{[t_i, t_i']} \Vert u(\cdot, t) \Vert_{L^2(S_0; NS_0)} \to 0. \]
	Note that $t_i' - t_i \to \infty$ due to $\lim_i \Sigma_{t_i} = S$, which is a minimal submanifold. Therefore, $t_i' > t_i + 1$ for sufficiently large $i$. By Schauder theory, we can estimate
	\[ \Vert u(\cdot, t_i') \Vert_{C^{2,\theta}(S_0; NS_0)}^2 \lesssim \int_{t_i'-1}^{t_i'} \Vert u(\cdot, t) \Vert_{L^2(S_0; NS_0)}^2 \, dt \to 0, \]
	which contradicts $\Vert u(\cdot, t_i') \Vert_{C^{2,\theta}(S_0; NS_0)} = \tfrac12 \eta$. Therefore, \eqref{eq:mcf.allard.L2.bound} is true. By the parametric mean curvature flow evolution equation and \eqref{eq:mcf.allard.lojasiewicz.simon}, there exists a constant $c$ (close to $1$) such that
	\begin{align*}
	& \tfrac{d}{dt} (\operatorname{Area}(S_0) - \operatorname{Area}(\Sigma_t))^\mu \\
	& \qquad \geq c \mu (\operatorname{Area}(S_0) - \operatorname{Area}(\Sigma_t))^{\mu-1} \Vert \mathbf{H}(u(\cdot, t)) \Vert_{L^2(S_0; NS_0)} \Vert \tfrac{\partial}{\partial t} u(\cdot, t) \Vert_{L^2(S_0, NS_0)} \\
	& \qquad \geq c \mu \Vert \tfrac{\partial}{\partial t} u(\cdot, t) \Vert_{L^2(S_0; NS_0)}.
	\end{align*}
	Integrating,
	\begin{align*}
	& (\operatorname{Area}(S_0) - \operatorname{Area}(\Sigma_{t_i'}))^\mu - (\operatorname{Area}(S_0) - \operatorname{Area}(\Sigma_{t_i}))^\mu \\
	& \qquad \geq c\mu \int_{t_i}^{t_i'} \Vert \tfrac{\partial}{\partial t} u(\cdot, t) \Vert_{L^2(S; NS)} \, dt.
	\end{align*}
	By \eqref{eq:mcf.allard.L2.bound}, this right hand side has a positive $\liminf$, which contradicts that the left hand side gives terms of a convergent series, due to the monotonicity of area. Thus, \eqref{eq:mcf.allard.time.intervals} holds true. 
\end{proof}

\begin{proof}[Proof of Theorem \ref{theo:mcf.allard}]
	First we show:
	
	\begin{clai} \label{clai:mcf.allard.limit}
		$\lim_{t \to -\infty} \Sigma_t$ exists in  $C^{2,\theta}$ and is an equatorial $\SS^m$.
	\end{clai}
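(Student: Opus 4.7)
The plan is to extract a smooth equatorial $\SS^m$ as a subsequential limit via Brakke compactness and Allard-type regularity, and then upgrade subsequential convergence to full convergence using the \L{}ojasiewicz--Simon-based Lemma \ref{lemm:mcf.analytic.uniqueness}. The blueprint is essentially the one already carried out in the easier setting of Proposition \ref{prop:mcf.hemisphere}, adapted to the higher-codimension low-area regime.

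First I would fix an arbitrary sequence $t_i \to -\infty$ and consider the translated flows $\Sigma^{(i)}_t := \Sigma_{t+t_i}$. By the uniform area bound \eqref{eq:mcf.allard.assumption} and Brakke's compactness theorem, a subsequence converges to an integral eternal Brakke flow. Because $\operatorname{Area}(\Sigma_t)$ is monotone nonincreasing in $-t$ and bounded, it has a finite limit, so the limiting Brakke flow has constant mass. Constancy of mass forces the limiting flow to be supported on a stationary integral varifold $V$ in $\SS^n$ of area strictly below $(1+\delta)\operatorname{Area}(\SS^m)$.

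Next I would argue that, for $\delta = \delta(n)$ sufficiently small, $V$ must be a multiplicity-one equatorial $\SS^m$. This proceeds by a standard compactness-contradiction argument: by the monotonicity formula, any $m$-dimensional stationary integral varifold in $\SS^n$ has area at least $\operatorname{Area}(\SS^m)$, and if a sequence of such varifolds has areas approaching $\operatorname{Area}(\SS^m)$, the limit must be an equatorial $\SS^m$ with multiplicity one. Therefore, for $\delta(n)$ small, $V$ itself is a multiplicity-one equatorial $\SS^m$. At this point, Allard's regularity theorem, or equivalently the local-regularity package already invoked in Lemma \ref{lemm:mcf.measure.closeness} via White's theorem, upgrades the measure-theoretic convergence $\Sigma_{t_i} = \Sigma^{(i)}_0 \to V$ to $C^{2,\theta}$ convergence, identifying the limit with a smooth equatorial $\SS^m$ which I denote by $S_0$.

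Finally, since $(\SS^n, \overline{g})$ is real analytic, I would invoke Lemma \ref{lemm:mcf.analytic.uniqueness} with this subsequential limit $S_0$: for every small $\eta > 0$ there exists $\tau > 0$ such that $\Sigma_t$ is $\eta$-close to $S_0$ in $C^{2,\theta}(S_0; NS_0)$ for all $t \leq -\tau$. This promotes subsequential convergence to full convergence $\lim_{t \to -\infty} \Sigma_t = S_0$ in $C^{2,\theta}$, which is the claim. The main obstacle is the second step, the area rigidity for stationary integral varifolds in $\SS^n$ near the threshold $\operatorname{Area}(\SS^m)$ and the simultaneous exclusion of higher-multiplicity configurations; the precise choice of $\delta = \delta(n)$ enters here, and this is the one place where codimension-independent input (a lower area gap for non-equatorial stationary $m$-varifolds in $\SS^n$) is invoked.
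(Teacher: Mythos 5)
Your proposal follows the same broad outline as the paper's proof: Brakke compactness of the translated flows, constancy of mass forcing a static limit, identification of the limiting stationary varifold $V$ as a multiplicity-one equatorial $\SS^m$, upgrade to $C^{2,\theta}$ convergence, and finally Lemma \ref{lemm:mcf.analytic.uniqueness} for full convergence. However, the critical second step—showing that $V$ must be a multiplicity-one equatorial $\SS^m$—is where your argument has a gap that the paper closes more directly.

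You argue by compactness-contradiction, asserting a "lower area gap for non-equatorial stationary $m$-varifolds in $\SS^n$." But the compactness argument as stated only shows that a \emph{sequence} of stationary integral $m$-varifolds with areas converging to $\operatorname{Area}(\SS^m)$ has a \emph{limit} equal to a multiplicity-one equator; it does not by itself show that each member of the sequence is eventually an equator, which is what is needed. Passing from "varifold close in measure to a multiplicity-one equator" to "varifold \emph{is} an equator" requires Allard's regularity theorem (to get that the varifold is a smooth graph over the equator) \emph{plus} the integrability of equatorial spheres (to conclude that nearby smooth minimal submanifolds are themselves equators). Neither is invoked in your sketch; the area gap is asserted, not proved.

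The paper sidesteps this entirely with a one-step argument: form the stationary cone $0\#V \subset \RR^{n+1}$, whose density at the apex equals $\operatorname{Area}(V)/\operatorname{Area}(\SS^m) < 1+\delta$. By upper semicontinuity of density and the monotonicity formula, every point of the cone has density $< 1+\delta$, so Allard's theorem (for $\delta$ small) makes the cone smooth everywhere, including near the apex; a stationary integral cone that is smooth near its vertex is a plane, so $V$ is a multiplicity-one equatorial $\SS^m$. This avoids any appeal to integrability and is the cleanest way to produce the area gap your argument takes as a black box. Your streamlining of the final step is fine: Lemma \ref{lemm:mcf.analytic.uniqueness} already yields the full $C^{2,\theta}$ limit, so the paper's additional appeal to Proposition \ref{prop:mcf.integrable} is, for the purposes of the claim itself, redundant (it is useful later to get the exponential rate).
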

	\begin{proof}
		First, pick an arbitrary $t_i \to -\infty$. By Brakke's compactness theorem \cite{Brakke78}, $(\Sigma_{t+t_i})_{t \leq 0}$ has a subsequence which converges in a measure theoretic sense to an integral Brakke flow whose area is a constant $\leq (1 + \delta) \operatorname{Area}(\SS^m)$, due to by \eqref{eq:mcf.allard.assumption}. Therefore, this is a static flow of a stationary integral varifold $V$ in $\SS^n$. By Allard's regularity theorem \cite{SimonGMT} applied to the stationary cone $0 \# V$, if $\delta$ is sufficiently small then $0 \# V$ must be a smooth cone, so $V$ must be an equatorial $\SS^m$ with multiplicity one. Therefore, all backward in time subsequential measure theoretic limits are multiplicity one equatorial $\SS^m$'s. Note that then White's regularity theorem by way of Lemma \ref{lemm:mcf.measure.closeness} promote the convergence to $C^{2,\theta}$.  By Lemma \ref{lemm:mcf.analytic.uniqueness}, $\Sigma_t$ is close to a \emph{fixed} $\SS^m$ for sufficiently negative $t$. But then Proposition \ref{prop:mcf.integrable} promotes this to full convergence in $C^{2,\theta}$, as $t \to -\infty$, to a fixed $\SS^m$.
	\end{proof}
	
	The result now follows as it did in Proposition \ref{prop:mcf.hemisphere} with the combination of Proposition \ref{prop:mcf.integrable} and Theorem \ref{theo:mcf.characterization}.  The $n-m$ potential flow directions of $\SS^m \subset \SS^n$ are predicted by the Morse index, which is $n-m$ \cite[Proposition 5.1.1]{Simons68}.
\end{proof}

\begin{proof}[Proof of Corollary  \ref{coro:csf}]
	It suffices to show that \eqref{eq:csf.length.assumption} implies \eqref{eq:mcf.allard.assumption}. Recall that
	\[ \lim_{t \to -\infty} \operatorname{Length}(\Gamma_t) - \operatorname{Length}(\Gamma_0) = \int_{-\infty}^0 \int_{\Gamma_t} \kappa_{\Gamma_t}^2 \, d\ell_{\Gamma_t}. \]
	Therefore, by virtue of \eqref{eq:csf.length.assumption}, there exists a sequence $t_i \to -\infty$ with
	\[ \lim_i \int_{\Gamma_{t_i}} \kappa_{\Gamma_{t_i}}^2 \, d\ell_{\Gamma_{t_i}} = 0. \]
	By the Sobolev embedding theorem and Allard's varifold compactness theorem \cite{SimonGMT}, after passing to a subsequence $\{ t_{i'} \}_{i'} \subset \{ t_i \}_i$, the curves $\Gamma_{t_{i'}}$ converge in $C^{1,\theta}$, $\theta \in (0, \tfrac12)$, to a stationary $C^{1,1/2}$ curve $\Gamma_*$. The only stationary $C^{1,1/2}$ curves inside $\SS^2$ are equators and multiples thereof, but a simple degree argument shows that embedded curves cannot converge in $C^0$ to an equator with multiplicity greater than one. Therefore,
	\[ \lim_{i'} \operatorname{Length}(\Gamma_{t_{i'}}) = \operatorname{Length}(\SS^1). \]
	This implies \eqref{eq:mcf.allard.assumption}, and the result follows by Theorem \ref{theo:mcf.allard}.
\end{proof}

Below we prove a technical lemma needed for Corollary \ref{coro:mcf.s3}:

\begin{lemm} \label{lemm:willmore.weak}
	Let $T$ be a 2-dimensional stationary integral varifold in $\SS^3$. If $\operatorname{Area}(T) \leq 2\pi^2$ and its associated $\ZZ_2$ chain $[T]$ has $\partial [T] = 0$, then $T$ is a multiplicity one equator or Clifford torus.
\end{lemm}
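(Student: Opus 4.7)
The plan is to first reduce $T$ to a smooth multiplicity-one minimal surface, and then invoke the Marques--Neves resolution of the Willmore conjecture.

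First, I will apply the Li--Yau inequality in varifold form. Viewing $T$ as a stationary integral 2-varifold in $\SS^3 \subset \RR^4$, and noting that minimality in $\SS^3$ forces $|H_{\RR^4}|^2 \equiv 4$ pointwise, one gets $\int |H_{\RR^4}|^2 \, d\mu_T = 4 \operatorname{Area}(T) \leq 8\pi^2$. The varifold Li--Yau inequality then yields $\Theta(T, p) \leq \operatorname{Area}(T)/(4\pi) \leq \pi/2 < 2$ for every $p \in \operatorname{spt} T$.

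Second, I will analyze tangent cones. At any $p \in \operatorname{spt} T$, let $C$ be a tangent cone. Then $C$ is a stationary integral 2-cone in $\RR^3$ with $\Theta(C, 0) = \Theta(T, p) < 2$, and the $\ZZ_2$-boundary-free condition on $[T]$ passes through rescaling and weak limits to $\partial[C] = 0$. Its link $\Gamma := C \cap \SS^2$ is then a stationary integral 1-varifold in $\SS^2$ of length $2\pi \Theta(T, p) < 4\pi$, with $\partial[\Gamma] = 0$ mod 2. The condition $\partial[\Gamma] = 0$ forbids odd-valence junction points, and at any even-valence junction the stationarity (first variation) balancing means the incident arcs pair up into pieces of smooth geodesics passing through; since there are no endpoints, every arc extends globally as a closed geodesic. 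Hence $\Gamma$ is a union of great circles with integer multiplicities, and the bound length $< 4\pi$ forces exactly one great circle with multiplicity one. Consequently $C$ is a multiplicity-one plane, and so $\Theta(T, p) = 1$.

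Third, since every tangent cone is a multiplicity-one plane, $\Theta(T, \cdot) \equiv 1$ on $\operatorname{spt} T$, so Allard's regularity theorem promotes $T$ to a smooth multiplicity-one integral varifold. Its support $\Sigma$ is therefore a closed embedded smooth minimal surface in $\SS^3$ with $\operatorname{Area}(\Sigma) \leq 2\pi^2$. By Almgren's theorem the only embedded closed minimal 2-sphere in $\SS^3$ is the equator (area $4\pi$), and by Marques--Neves \cite{MarquesNeves14} any other embedded closed minimal surface in $\SS^3$ has area $\geq 2\pi^2$, with equality only for the Clifford torus. This forces $\Sigma$ to be an equator or a Clifford torus.

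The main obstacle is the tangent cone step. Without the $\partial[T] = 0$ mod 2 hypothesis, one could have stationary cones over nontrivial geodesic networks in $\SS^2$ (such as the tetrahedral 1-skeleton, whose total length is $\approx 11.46 < 4\pi$), blocking the route to classical regularity; the mod 2 boundary condition is precisely what rules these out and reduces the link to a single great circle.
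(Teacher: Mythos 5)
Your Li--Yau route to the density bound $\Theta(T,p)\leq \operatorname{Area}(T)/(4\pi)\leq \pi/2<2$ is a genuine and clean alternative to what the paper does (the paper instead considers the cone $C^3 := 0\# T\subset\RR^4$ and uses monotonicity for stationary cones to bound densities at $x_0\in C^3\setminus\{0\}$ by the density at the vertex). Similarly, you attack regularity by blowing up $T$ directly at a point of $\SS^3$, whereas the paper blows up $C^3$ at an off-vertex singular point; both reduce to analyzing a $2$-dimensional stationary cone in $\RR^3$ with small density and $\partial[\cdot]=0$ mod $2$. Up to this point the two arguments are parallel and both are sound.

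The gap is in your analysis of the link $\Gamma\subset\SS^2$. You assert that at any even-valence junction, ``stationarity \ldots\ means the incident arcs pair up into pieces of smooth geodesics passing through,'' i.e.\ that the unit tangents decompose into antipodal pairs, whence $\Gamma$ is a union of great circles. This is true for valence $2$ and $4$ (for four unit vectors in $\RR^2$ summing to zero, one checks directly they form two antipodal pairs), but it is false for valence $\geq 6$: for instance $\{1,1,\omega,\omega,\omega^2,\omega^2\}$ with $\omega=e^{2\pi i/3}$ sums to zero and has even valence, yet admits no antipodal pairing, and one can build stationary $\ZZ_2$-cycle geodesic networks on $\SS^2$ realizing such junctions that are \emph{not} unions of great circles (e.g.\ three half-great-circles between antipodal points, each with multiplicity two). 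So the deduction ``hence $\Gamma$ is a union of great circles'' does not follow from stationarity and parity alone, and the $\operatorname{Length}(\Gamma)<4\pi$ bound cannot then be invoked in the way you use it. To close this gap you need a second density estimate on $\Gamma$ itself: taking the cone $0\#\Gamma\subset\RR^3$ and using cone monotonicity gives $\Theta^1(\Gamma,q)\leq \operatorname{Length}(\Gamma)/(2\pi)<2$, hence valence $<4$, hence valence $2$ by parity, hence $\Gamma$ is smooth and is a single multiplicity-one great circle. This is, in effect, the further dimension reduction the paper carries out: it descends all the way to a $1$-dimensional cone $\widehat C^1\subset\RR^2$, where stationary cones are unions of $k$ half-rays with density $k/2$, so density $<2$ and $k$ even immediately give $k=2$. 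Once that step is supplied, your conclusion (Allard regularity from $\Theta\equiv 1$, then Marques--Neves, with Almgren handling the genus-zero case) is correct and matches the paper's endgame.
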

\begin{proof}
	If $T$ is smooth, the result follows by Marques--Neves's  \cite{MarquesNeves14} resolution of the Willmore conjecture: smooth minimal surfaces in $\SS^3$ have area $4\pi$ (equator), $2 \pi^2$ (Clifford torus), or larger. 
	
	We will show that $T$ is, indeed, smooth by arguing by contradiction. If $T^2 := T$ were singular, then the 3-dimensional stationary cone $C^3 := 0 \# T^2$ would be one where the origin is not an isolated singularity. If $x_0 \neq 0$ denotes a singular point of $C^3$, then, by the monotonicity formula, the densities of $C^3$ at $x_0$ and the origin $0$ satisfy (see \cite{SimonGMT}):
	\begin{equation} \label{eq:mcf.s3.cone.i}
	\Theta^3(C^3, x_0) \leq \Theta^3(C^3, 0) = \tfrac{1}{4\pi} \operatorname{Area}(T^2) \leq \tfrac12 \pi.
	\end{equation}
	Let $\overline{C}^3$ be a tangent cone to $C^3$ at $x_0$. Automatically, $\overline{C}^3 \cong \overline{C}^2 \times \RR$ for some stationary 2-dimensional cone $\overline{C}^2 \subset \RR^3$. Let $T^1 \subset \SS^2$ be the link of $\overline{C}^2$. It has
	\begin{equation} \label{eq:mcf.s3.cone.ii} 	
	\operatorname{Length}(T^1) = 2\pi \Theta^2(\overline{C}^2, 0) = 2\pi \Theta^3(C^3, x_0) \leq 2 \pi \cdot \tfrac12 \pi = \pi^2.
	\end{equation}
	If $T^1$ is smooth, then $\pi^2 < 4\pi$ and \eqref{eq:mcf.s3.cone.ii} imply $T^1 \cong \SS^1$ with multiplicity one, so  $\overline{C}^2 \cong \RR^2$ with multiplicity one, so $\overline{C}^3 \cong \RR^3$ with multiplicity one; this violates the singular nature of $x_0 \in C^3$ by Allard's theorem \cite{SimonGMT}.
	
	Therefore, $T^1$ has to be singular, too. We repeat our previous argument. Let $x_1 \neq 0$ denote a singular point of the 2-dimensional cone $C^2 := 0 \# T^1 \subset \RR^3$. By the same argument as in \eqref{eq:mcf.s3.cone.i}, and using \eqref{eq:mcf.s3.cone.ii},
	\[ \Theta^2(C^2, x_1) \leq \Theta^2(C^2, 0) = \tfrac{1}{2\pi} \operatorname{Length}(T^1) \leq  \tfrac12 \pi. \]
	Let $\widehat{C}^2$ be a tangent cone to $C^2$ at $x_1$. Automatically, $\widehat{C}^2 \cong \widehat{C}^1 \times \RR$ for some 1-dimensional stationary cone $\widehat{C}^1 \subset \RR^2$ with
	\begin{equation} \label{eq:mcf.s3.cone.iii}
	\Theta^1(\widehat{C}^1, 0) = \Theta^2(C^2, x_1) \leq \tfrac12 \pi.
	\end{equation}
	It is well known that all  1-dimensional stationary cones are unions of $k \geq 2$ half-rays and have $\Theta^1(\widehat{C}^1, 0) = \tfrac12 k$. We have $k \leq 3$ by \eqref{eq:mcf.s3.cone.iii}. Moreover, $k$ is even because $\widehat{C}^1$ is obtained by various blow ups of the $\ZZ_2$ cycle $T$. Therefore, $k=2$. This means $\widehat{C}^1 \cong \RR$ with multiplicity one and $\widehat{C}^2 \cong \RR^2$ with multiplicity one; this violates the singular nature of $x_1 \in C^2$ by Allard's theorem.
\end{proof}

\begin{proof}[Proof of Corollary \ref{coro:mcf.s3}]
	We proceed as in the proof of Theorem \ref{theo:mcf.allard}. Note that both $\SS^2$ and the Clifford torus are integrable minimal surfaces; for the latter see \cite[Theorem 10]{WuYiLawson71} which shows the space of Jacobi fields is 4-dimensional on Clifford tori, which matches the dimension of the space  of Clifford tori ($\approx \RR \PP^2 \times \RR \PP^2$). Therefore, in either case we will have a \emph{unique} backward in time limit (by repeating the argument from Claim \ref{clai:mcf.allard.limit}, which involved Lemma \ref{lemm:mcf.analytic.uniqueness} and Proposition \ref{prop:mcf.integrable}), as long as we can show:
	
	\begin{clai} \label{clai:mcf.s3.limit}
		If  $\delta$ in \eqref{eq:mcf.s3.assumption} is small enough, then every backward subsequential limit of $(\Sigma_t)_{t \leq 0}$ is a multiplicity one equator or Clifford torus.
	\end{clai}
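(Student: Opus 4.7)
The plan is to argue by contradiction. Suppose no such $\delta > 0$ works. Then there exist $\delta_k \downarrow 0$, ancient mean curvature flows $(\Sigma^{(k)}_t)_{t \leq 0}$ satisfying \eqref{eq:mcf.s3.assumption} with $\delta = \delta_k$, and for each $k$ a backward subsequential measure-theoretic limit $V_k$ that is \emph{not} a multiplicity one equator or Clifford torus. Arguing as in Claim \ref{clai:mcf.allard.limit}, Brakke's compactness theorem \cite{Brakke78} together with the monotonicity and uniform boundedness of area along each flow forces $V_k$ to be a stationary integral varifold in $\SS^3$ with $\operatorname{Area}(V_k) \leq 2\pi^2 + \delta_k$. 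Since each $\Sigma^{(k)}_t$ is a closed embedded surface in $\SS^3$, it separates $\SS^3$ (as $H_2(\SS^3;\ZZ_2) = 0$) and is therefore a boundaryless $\ZZ_2$-cycle; taking BV limits of the indicator functions of the Caccioppoli sets it bounds endows $V_k$ with an associated $\ZZ_2$-chain $[V_k]$ with $\partial [V_k] = 0$.

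Next I would apply Allard's compactness theorem to the family $\{V_k\}$, which has a uniform area bound, to extract a subsequential weak varifold limit $V_k \to V_\infty$ --- a stationary integral varifold in $\SS^3$ with $\operatorname{Area}(V_\infty) \leq 2\pi^2$ and (via the same BV compactness) $\partial [V_\infty] = 0$. Lemma \ref{lemm:willmore.weak} then identifies $V_\infty$ as either a multiplicity one equator $\SS^2$ or a multiplicity one Clifford torus, hence a smooth embedded minimal surface with density one everywhere.

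For the final contradiction I would combine Allard's regularity theorem with the integrability of equators and Clifford tori. Since $V_\infty$ has density one everywhere, for large $k$ Allard's theorem yields that $V_k$ is smooth of multiplicity one and can be written as a small $C^{2,\theta}$ graph over $V_\infty$, so $V_k \in \cM^{2,\theta}(\eps)$ for the nonparametric area functional on a tubular neighborhood of $V_\infty$. The integrable structure of the critical-point set described in Section \ref{sec:notation.critical.points} then forces $V_k$ to lie in the corresponding integrable family: it is itself a multiplicity one equator (using \cite[Proposition 5.1.1]{Simons68}) or a multiplicity one Clifford torus (using \cite[Theorem 10]{WuYiLawson71} for the Jacobi field space, which has the same dimension as the $4$-parameter family of Clifford tori). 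This contradicts the choice of $V_k$.

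The main obstacle is ensuring the $\ZZ_2$-cycle structure is preserved under both layers of limit-taking; this is handled by working with reduced boundaries of Caccioppoli sets, for which BV compactness yields the required convergence of chains. A secondary subtlety, at the final step, is that the integrability of the Clifford torus must be sharp enough to rule out any nearby minimal surface, not just those tangent to one Jacobi direction --- this is exactly what the reduction of the critical-point equation to the finite-dimensional $\cA_{\operatorname{fin}}$ in Section \ref{sec:notation.critical.points} provides.
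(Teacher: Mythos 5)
Your proof follows essentially the same strategy as the paper's (double compactness, Lemma \ref{lemm:willmore.weak} at the limiting level, Allard plus integrability to propagate back to $V_k$), and your contradiction framing is in fact slightly cleaner: by taking $V_k$ to be an \emph{arbitrary} backward subsequential limit that fails the claim, you dispense with the paper's two-step structure (prove the ``weaker'' claim under the exact bound $\lim \operatorname{Area} \le 2\pi^2$ first, then deduce that bound for large $k$ and invoke the weaker claim). That is a genuine simplification in bookkeeping.

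The substantive difference is how you endow $V_k$ and $V_\infty$ with an associated $\ZZ_2$ cycle. The paper invokes White's \cite[Theorem 4.2]{White09} for the backward-in-time Brakke limit and \cite[Theorem 1.1]{White09} for the subsequent varifold limit in $k$. You instead propose BV compactness of the Caccioppoli sets bounded by $\Sigma^{(k)}_{t}$. Both are reasonable, but you should be careful about what ``associated'' means for Lemma \ref{lemm:willmore.weak}: the proof of that lemma needs the tangent cones of $T$ to inherit the $\ZZ_2$ cycle structure, which in turn requires that the \emph{multiplicity} of the varifold $V_k$ minus the multiplicity of the reduced boundary $\partial^* E_\infty$ is even $\sH^2$-a.e. (so that, after iterated blow-up, the one-dimensional cone link must have an even number of rays). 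BV compactness of $\{E^{(k)}_{t_i}\}$ gives you a limiting Caccioppoli set $E_\infty$ with $\partial[\partial^* E_\infty]=0$ and, by lower semicontinuity, $|\partial^* E_\infty|\le V_k$ as measures; it does \emph{not}, by itself, give the parity relationship between $V_k$ and $\partial^* E_\infty$ when sheets collapse in the limit. That parity statement is precisely what White's theorems (or equivalents, e.g.\ the analysis of Schoen--Simon type limits) establish, and it is the reason the paper cites \cite{White09} rather than appealing to BV compactness alone. So your argument is correct in outline, but at the step ``endows $V_k$ with an associated $\ZZ_2$-chain'' you should either cite \cite[Theorems 1.1, 4.2]{White09} as the paper does or supply the multiplicity/parity argument explicitly. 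The remaining steps --- Allard compactness and density-ratio estimates, Lemma \ref{lemm:willmore.weak}, Allard regularity, and the integrability of equators and Clifford tori via \eqref{eq:critical.point.space}, \cite[Proposition 5.1.1]{Simons68}, \cite[Theorem 10]{WuYiLawson71} --- coincide with the paper and are fine.
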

	\begin{proof}[Proof of Claim \ref{clai:mcf.s3.limit}]
		We first prove a weaker result; namely, that the claim holds true if we replace \eqref{eq:mcf.s3.assumption} by $\lim_{t \to -\infty} \operatorname{Area}(\Sigma_t) \leq 2\pi^2$. Arguing as in Claim \ref{clai:mcf.allard.limit}, we know that any subsequential limit of the translated flows is an eternal integral Brakke flow with constant area $\leq 2\pi^2$. By \cite[Theorem 4.2]{White09}, the limiting stationary integral varifold $T$ has an associated $\ZZ_2$ chain $[T]$ with $\partial [T] = 0$. Therefore, the claim follows from Lemma \ref{lemm:willmore.weak} above.
		
		We now prove the general claim. Suppose $(\Sigma^{(k)}_t)_{t \leq 0}$ is a sequence of ancient flows in $\SS^3$ satisfying \eqref{eq:mcf.s3.assumption} with $\delta = \delta_k \to 0$. Let $T^{(k)}$ be some backward subsequential limit of $(\Sigma^{(k)}_t)_{t \leq 0}$. 
		
		By White's mean curvature flow theorem \cite[Theorem 4.2]{White09}, $T^{(k)}$ is a stationary integral varifold in $\SS^3$ with $\operatorname{Area}(T^{(k)}) \leq 2\pi^2 + \delta_k$ and a  corresponding $\ZZ_2$ chain $[T^{(k)}]$ with $\partial [T^{(k)}] = 0$. Passing to a subsequence and using Allard's theorem \cite{SimonGMT} and White's enhanced convergence theorem \cite[Theorem 1.1]{White09}, $\lim_{k} T^{(k)} = T$, a stationary integral varifold in $\SS^3$ with $\operatorname{Area}(T) \leq 2\pi^2$ and a corresponding $\ZZ_2$ $\ZZ_2$ chain $[T]$ with $\partial [T] = 0$. By Lemma \ref{lemm:willmore.weak}, $T$ is a multiplicity one equator or Clifford torus. Therefore, by Allard's theorem \cite{SimonGMT}, each $T^{(k)}$, with $k$ sufficiently large, is smooth. Thus, $T^{(k)}$ is also a multiplicity one equator or Clifford torus; this follows from the integrability of equators and Clifford tori and the discussion regarding  \eqref{eq:critical.point.space}. Thus,
		\[ \lim_{t \to -\infty} \operatorname{Area}(\Sigma^{(k)}_t) = \operatorname{Area}(T^{(k)}) \leq 2\pi^2. \]
		The claim follows from the weaker result we initially proved.
	\end{proof}

	The result again follows as it did in Theorem \ref{theo:mcf.allard}.
\end{proof}

\appendix


\section{Examples with arbitrarily slow convergence} \label{sec:slow.convergence}

We describe examples of ancient mean curvature flows which converge to their backward-in-time limits arbitrarily slowly. In particular they converge slower than polynomially, in contrast with the integrable case and the real analytic case.  We wish to point out that Carlotto--Chodosh--Rubinstein \cite{CarlottoChodoshRubinstein15} recently used the \L{}ojasiewicz--Simon inequality for an interesting systematic study of the speed of convergence of Yamabe flows, another example of a parabolic flow. Their argument can be reasonably expected to be adaptable to our setting, too. However, for our purposes there are fairly explicit examples of slow ancient mean curvature flows, which we construct below. 

We will construct examples $(\SS^2, g)$, where the metric $g$ is rotationally symmetric and, away from two antipodal points $p$, $p'$,
\[ g := ds^2 + e^{2f(s)} \, d\theta^2, \]
for $(s, \theta) \times (-2, 2) \times \SS^1 \approx \SS^2 \setminus \{ p, p'\}$ and $f : (-2, 2) \to \RR$ is to be determined. Let $\tau : (0, 1] \to (-\infty, 0]$ be smooth and such that $\lim_{s \to 0} \tau(s) = -\infty$, $\tau(1) = 0$, $\tau' > 0$, and
\[ 
\int_0^1 \frac{d\sigma}{\tau'(\sigma)} < \infty.
\]
Such a function $\tau(s)$ can be prescribed as a time of arrival function for a curve in a rotationally symmetric space that is $s$ units away from the backward-in-time limit geodesic. We point out that our admissible time of arrival functions include, for instance, the sub-polynomial function $\tau(s) := \log |s|$. It is a straightforward exercise to check that
\[ f(s) := \int_0^s \frac{d\sigma}{\tau'(\sigma)} \]
is such that $t \mapsto \{ s : \tau(s) = t \}$ is an curve shortening flow with time of arrival function $\tau$. Of course, $f$ can be extended to $(-2, 2)$ in such a way so that the two-sphere closes up smoothly.


\section{An ODE lemma}  \label{sec:mz.ode}

We point out that the ODE lemma of Merle--Zaag \cite[Lemma A.1]{MerleZaag98} holds true without certain assumptions they made (namely, that $x$, $z \to 0$ as $s \to -\infty$, or that $y(s_j) \to 0$ along \emph{all} sequences $s_j \to -\infty$):

\begin{lemm} \label{lemm:mz.ode}
	Suppose $x$, $y$, $z : (-\infty, 0] \to [0, \infty)$ be absolutely continuous functions such that
	\begin{equation} \label{eq:mz.ode.nonzero}
		x + y + z > 0,
	\end{equation}
	\begin{equation} \label{eq:mz.ode.asymptotics}
		\liminf_{s \to -\infty} y(s) = 0,
	\end{equation}
	and, for some $\eps > 0$,
	\begin{equation} \label{eq:mz.ode.system}
		\begin{array}{ccc}
			|x'| \leq \eps (x + y + z), \\
			y' + y \leq \eps(x + z), \\
			z' - z \geq - \eps(x + y).
		\end{array}
	\end{equation}
	There exist $\eps_0 > 0$, $c > 0$, such that if $\eps \leq \eps_0$, then 
	\begin{equation} \label{eq:mz.ode.stable}
		y \leq 2 \eps (x + z) \text{ on } (-\infty, 0],
	\end{equation}
	and one of the following holds:
	\begin{equation} \label{eq:mz.ode.A}
		\text{either } \exists s_* \in (-\infty, 0] \text{ such that } z \leq 8 \eps x \text{ on } (-\infty, s_*],
	\end{equation}
	\begin{equation} \label{eq:mz.ode.B}
		\text{or } x \leq c \eps z \text{ on } (-\infty, 0].
	\end{equation}
\end{lemm}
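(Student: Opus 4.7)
The plan is to proceed in three stages: first establish the stable bound \eqref{eq:mz.ode.stable} by a backward propagation argument; next introduce a linear barrier to split into the trichotomy between \eqref{eq:mz.ode.A} and \eqref{eq:mz.ode.B}; and finally, in the remaining $z$-dominant regime, control the ratio $x/z$ by a backward Gronwall estimate.

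\emph{Step 1: Proving \eqref{eq:mz.ode.stable}.} Set $\tilde{y} := y - 2\eps(x+z)$ and argue by contradiction, assuming $\tilde y(s_0) > 0$ for some $s_0 \leq 0$. Combining the three inequalities of \eqref{eq:mz.ode.system} and substituting $y = \tilde y + 2\eps(x+z)$ yields, after routine simplification, the sharp bound
\begin{equation*}
\tilde y' \leq -(1-4\eps^2)\tilde y + \eps x(-1+4\eps+8\eps^2) + \eps z(-3+2\eps+8\eps^2) \leq -\tfrac{1}{2}\tilde y
\end{equation*}
on $\{\tilde y > 0\}$, for $\eps$ sufficiently small. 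Since $\tilde y$ is thus strictly decreasing wherever positive, a finite left endpoint $a$ of the maximal positivity interval containing $s_0$ would force $\tilde y(s_0) \leq \tilde y(a^+) = \tilde y(a) = 0$, contradicting $\tilde y(s_0) > 0$. Hence $\tilde y > 0$ on all of $(-\infty, s_0]$, and integrating backward gives $y(s) \geq \tilde y(s) \geq \tilde y(s_0) e^{(s_0-s)/2} \to \infty$ as $s \to -\infty$, contradicting \eqref{eq:mz.ode.asymptotics}.

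\emph{Step 2: Barrier and trichotomy.} Set $\phi := z - 8\eps x$. On $\{\phi > 0\}$, the bound $x \leq z/(8\eps)$ together with \eqref{eq:mz.ode.stable} substituted into the inequalities for $z'$ and $x'$ give, for $\eps$ small,
\begin{equation*}
\phi' = z' - 8\eps x' \geq z(1-8\eps^2) - \eps(1+8\eps)(x+y) \geq \tfrac{1}{2}z > 0.
\end{equation*}
Hence $\phi$ is strictly increasing wherever positive, so $\{\phi > 0\}\cap(-\infty, 0]$ is forward-closed and therefore either empty, or of the form $(a, 0]$ with $a \in [-\infty, 0)$. If it is empty, then $\phi \leq 0$ on $(-\infty, 0]$, yielding \eqref{eq:mz.ode.A} with $s_* = 0$. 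If $a > -\infty$, then $\phi \leq 0$ on $(-\infty, a]$, yielding \eqref{eq:mz.ode.A} with $s_* = a$. The remaining case $a = -\infty$ is handled in Step 3.

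\emph{Step 3: Deriving \eqref{eq:mz.ode.B} when $z > 8\eps x$ on $(-\infty, 0]$.} Here the computation of Step 2 upgrades to $z' \geq \tfrac{3}{4}z$; in particular $z > 0$ throughout (else $x=y=z=0$, violating \eqref{eq:mz.ode.nonzero}). Let $r := x/z \in [0, 1/(8\eps))$. Using $|x'| \leq \eps(1+2\eps)(x+z)$, which follows from \eqref{eq:mz.ode.stable}, together with $z'/z \geq 3/4$,
\begin{equation*}
r' \leq \frac{|x'|}{z} - r\cdot \frac{z'}{z} \leq \eps(1+2\eps)(r+1) - \tfrac{3}{4}r \leq -\tfrac{1}{2}r + 2\eps
\end{equation*}
for $\eps$ small, so $(e^{s/2}r)' \leq 2\eps\, e^{s/2}$. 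Integrating from $s$ up to an arbitrary $s_0 \leq 0$, sending $s \to -\infty$, and using the a priori bound $r \leq 1/(8\eps)$ to kill the boundary contribution, we obtain $r(s_0) \leq 4\eps$ uniformly, which is \eqref{eq:mz.ode.B} with $c = 4$.

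\emph{Main obstacle.} The subtlest point is the backward propagation in Step 1, since $\tilde y'$ has no definite sign at the boundary $\tilde y = 0$. The trick is that when one carefully combines all three inequalities of \eqref{eq:mz.ode.system} and eliminates $y$ using $y = \tilde y + 2\eps(x+z)$, the terms involving $x$ and $z$ happen to carry a negative sign (for $\eps$ small), so that the self-improving inequality $\tilde y' \leq -\tilde y/2$ holds uniformly on $\{\tilde y > 0\}$; only then can the blow-up mechanism contradict \eqref{eq:mz.ode.asymptotics}.
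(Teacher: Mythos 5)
Your proof is correct and follows the same overall architecture as the paper's: the same auxiliary function $\tilde y = y - 2\eps(x+z)$ with the backward-propagation contradiction against \eqref{eq:mz.ode.asymptotics}, and the same barrier $\phi = z - 8\eps x$ (which is exactly the Merle--Zaag argument the paper cites for the dichotomy step). Your explicit computation in Step 1, $\tilde y' \leq -(1-4\eps^2)\tilde y + \eps x(-1+4\eps+8\eps^2) + \eps z(-3+2\eps+8\eps^2)$, checks out and is actually a little sharper than needed (the paper's cited estimate only gives $\tilde y' \leq 0$, which already suffices to contradict $\liminf y = 0$ via uniform positivity rather than blowup). The one place where you genuinely depart from the paper is Step 3: the paper establishes $z' \geq \tfrac14 z$, deduces $z(s) \geq \tfrac14 \int_{-\infty}^s z$ and hence $z(s_i) \to 0$ along a sequence, then recovers $x \lesssim \int_{-\infty}^s z \lesssim z$ by the fundamental theorem of calculus and bootstraps once more to insert a factor of $\eps$. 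You instead run a backward Gronwall estimate on the ratio $r = x/z$, using the a priori bound $r < 1/(8\eps)$ to kill the boundary term at $-\infty$; this reaches $x \leq 4\eps z$ in a single pass and is arguably the cleaner route, while also making transparent exactly where each hypothesis enters. Both arguments are valid and give a constant $c$ of the same order; your $c=4$ is explicit, whereas the constant printed in the paper appears to carry a stray factor of $\eps$ from the bootstrapping bookkeeping.
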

\begin{proof}
	Conclusion \eqref{eq:mz.ode.stable} follows as in \cite[p. 172]{MerleZaag98}. Indeed, we claim that $\beta := y - 2 \eps (x + z) \leq 0$. If this were false, there would exist $s^* \leq 0$ with $\beta(s^*) > 0$. Then computing as in \cite[p. 172]{MerleZaag98}, $\beta > 0 \implies \beta' \leq 0$. In particular, $\beta \geq \beta(s^*) > 0$ on $(-\infty, s^*]$, contradicting $\liminf_{s \to -\infty} \beta(s) \leq 0$, which follows from \eqref{eq:mz.ode.asymptotics}. 
	
	Now either there exists $s_* \in (-\infty, 0]$ such that $z(s_*) < 8 \eps x(s_*)$, or $8 \eps x \leq z$ on $(-\infty, 0]$. In the first case, we proceed as in \cite[p. 173]{MerleZaag98} to show that $z \leq 8 \eps x$ everywhere on $(-\infty, s_*]$. In the second case, we can proceed as follows. First, note that $z' \geq \tfrac14 z$. Then,
	\begin{equation} \label{eq:mz.ode.B.i}
		z(s) \geq \tfrac14 \int_{-\infty}^s z, \; s \leq 0.
	\end{equation}
	Thus, there exists a sequence $s_i \to -\infty$ with $z(s_i) \to 0$ as $i \to \infty$. Thus, $x(s_i) \leq (8 \eps)^{-1} z(s_i) \to 0$ along the same sequence. Note that $x' \leq (2\eps + \tfrac14) z$. By the fundamental theorem of calculus, the dominated convergence theorem, and \eqref{eq:mz.ode.B.i},
	\[ x(s) = \lim_{i \to \infty} \left[ x(s_i) + (2\eps + \tfrac14) \int_{s_i}^s z \right] = (2\eps + \tfrac14) \int_{-\infty}^s z \leq (1 + 8 \eps) z(s). \]
	Bootstrapping this improved bound on $x$ in terms of $z$ into the estimate for $x'$, and proceeding with the same exact argument, the result follows with $c = 8\eps(2 + 8\eps)$. 
\end{proof}


\section{Parabolic Schauder theory} \label{sec:regularity}

We collect here some facts regarding regularity theory for parabolic systems that we need to use. We will work on $\RR^{n+1}_+ = \RR^n \times \RR_+$ with $\RR^Q$-valued systems of the form $u_t = \cL u$ where:
\begin{equation} \label{eq:regularity.operator}
	\cL u := \sum_{i,j=1}^n a_{ij} D_{ij} u + \sum_{i=1}^n b_i D_i u + c u.
\end{equation}
The coefficients $a_{ij}$, $b_i$, $c : \RR^{n+1}_+ \to E := \operatorname{End}(\RR^Q)$ are such that:
\begin{equation} \label{eq:regularity.operator.elliptic}
	\sum_{i,j=1}^n \langle a_{ij} v, v \rangle \tau_i \tau_j \geq \lambda |\tau|^2 |v|^2, \; \forall (x, t) \in \RR^{n+1}_+, \; \tau \in \RR^n, \; v \in \RR^Q,
\end{equation}
\begin{equation} \label{eq:regularity.operator.holder}
	\Vert a_{ij} \Vert_{C^{0,\theta}_P(\RR^{n+1}_+; E)}, \; \Vert b_i \Vert_{C^{0,\theta}_P(\RR^{n+1}_+; E)}, \; \Vert c \Vert_{C^{0,\theta}_P(\RR^{n+1}_+; E)} \leq \Lambda,
\end{equation}
for some fixed constants $\lambda > 0$, $\Lambda > 0$, $\theta \in (0, 1)$. We recall that, in this setting, the G\aa{}rding inequality for elliptic systems remains valid; see the discussion near \cite[(1.8)']{Simon83}. (Going from this Euclidean setting to the curved setting is standard; we are working on smooth Riemannian manifolds whose connections factor into the coefficients $a_{ij}$, $b_i$, $c$ in ways that are allowed by \eqref{eq:regularity.operator.elliptic}, \eqref{eq:regularity.operator.holder}.) 

We now state the ``interior'' Schauder estimate for parabolic equations. References include Schlag \cite{Schlag96} and Simon \cite{Simon97}:

\begin{theo}[$C^{2,\theta}$-$C^\theta$-$L^\infty$ interior Schauder estimate] \label{theo:regularity.schauder}
	If $u : \RR^{n+1}_+ \to \RR^Q$ is smooth, then
	\begin{multline} \label{eq:regularity.schauder}
		[\tfrac{\partial}{\partial t} u]_{C^{\theta}_P(\RR^n \times [1, \infty); \RR^Q)} + [D^2 u]_{C^{\theta}_P(\RR^n \times [1,\infty); \RR^Q)} \\
		\leq C \big( [(\tfrac{\partial}{\partial t} - \cL) u]_{C^{\theta}_P(\RR^{n+1}_+; \RR^Q)} + \Vert u \Vert_{L^\infty(\RR^{n+1}_+; \RR^Q)} \big)
	\end{multline}
	for some constant $C = C(n, \lambda, \Lambda, \theta) > 0$.
\end{theo}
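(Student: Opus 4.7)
The plan is to follow the standard Campanato-type approach to parabolic Schauder theory, carefully adapted to systems satisfying only the Legendre--Hadamard ellipticity \eqref{eq:regularity.operator.elliptic}. The main tool throughout is G\aa{}rding's inequality in place of the maximum principle, since scalar pointwise arguments are unavailable. Because the estimate is an interior-in-time bound on $\RR^n \times [1, \infty)$ in terms of data on $\RR^{n+1}_+$, once the right local estimate is established on a unit parabolic cylinder, the global statement will follow by a translation-invariant covering argument.

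First, I would establish the estimate in the constant-coefficient homogeneous case. Freeze coefficients at a point $(x_0, t_0)$ to obtain an operator $\cL_0$ satisfying \eqref{eq:regularity.operator.elliptic}, and consider smooth solutions $v$ of $v_t = \cL_0 v$ on a parabolic cylinder $Q_R := B_R(x_0) \times (t_0 - R^2, t_0)$. Differentiating the equation, G\aa{}rding's inequality for systems together with a Caccioppoli-type estimate and iteration yield interior bounds
\[ \sup_{Q_{R/2}} \Vert D^\alpha v \Vert + \sup_{Q_{R/2}} \Vert \partial_t^j v \Vert \leq C(\alpha, j) R^{-|\alpha| - 2j - (n+2)/2} \Vert v \Vert_{L^2(Q_R; \RR^Q)} \]
for all multiindices $\alpha$ and $j \geq 0$, and in particular the Campanato-type decay
\[ \int_{Q_\rho} \bigl| D^2 v - (D^2 v)_{Q_\rho} \bigr|^2 \leq C \bigl( \tfrac{\rho}{R} \bigr)^{n+2+2\theta'} \int_{Q_R} \bigl| D^2 v - (D^2 v)_{Q_R} \bigr|^2 \]
for any $\theta' \in (0,1)$ and $0 < \rho \leq R$, together with the analogous estimate for $\partial_t v$.

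Next, I would pass to the variable-coefficient inhomogeneous case by the usual freeze-and-perturb argument. On a small parabolic cylinder around $(x_0, t_0)$ with $t_0 \geq 1$, decompose $u = v + w$, where $v$ solves $v_t = \cL_{x_0, t_0} v$ with Cauchy data matching $u$ on the parabolic boundary, and $w$ absorbs both the inhomogeneity $(\partial_t - \cL)u$ and the coefficient oscillation $(\cL - \cL_{x_0, t_0}) u$. Using \eqref{eq:regularity.operator.holder}, the perturbation $\cL - \cL_{x_0, t_0}$ contributes terms of order $R^\theta$ times a $C^2$-norm of $u$; combining the constant-coefficient decay for $v$ with the a priori $C^\theta_P$ bound on $(\partial_t - \cL) u$ produces Campanato-type decay for $D^2 u$ and $\partial_t u$ on parabolic cylinders inside $\RR^n \times [1, \infty)$. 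Via Campanato's characterization of $C^\theta_P$ this upgrades to the pointwise $C^\theta_P$ seminorm estimate on each unit cylinder.

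Finally, the globalization step is routine: every $(x_0, t_0) \in \RR^n \times [1, \infty)$ admits a unit parabolic cylinder $B_1(x_0) \times (t_0 - 1, t_0) \subset \RR^{n+1}_+$, so applying the local estimate uniformly over the covering and taking suprema gives \eqref{eq:regularity.schauder} with $C = C(n, \lambda, \Lambda, \theta)$. The main obstacle, relative to the scalar parabolic case, is that the Legendre--Hadamard condition does not give pointwise coercivity, so the maximum-principle toolkit is unavailable and every step above must be carried out in the $L^2$ energy framework via G\aa{}rding's inequality. This is precisely the framework developed by Simon \cite{Simon97}, and the elliptic precursor near \cite[(1.8)$'$]{Simon83}, from which the estimate can be extracted with only minor bookkeeping of the parabolic scaling.
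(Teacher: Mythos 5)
The paper does not give a proof of this theorem: Theorem \ref{theo:regularity.schauder} is stated as a known result with pointers to Schlag \cite{Schlag96} and Simon \cite{Simon97}, and the only remark the authors add is the one preceding the theorem statement, namely that G\aa{}rding's inequality substitutes for pointwise coercivity in the Legendre--Hadamard setting. So there is no internal proof to compare against.

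Your sketch is a correct outline of the Campanato-decay route, which is precisely Schlag's method in \cite{Schlag96}: constant-coefficient regularity via energy estimates and iterated Caccioppoli bounds, then freeze-and-perturb plus the Campanato characterization of parabolic H\"older spaces, then a translation-invariant covering to globalize over $\RR^n \times [1,\infty)$. One attribution quibble: you cite Simon \cite{Simon97} as the framework, but Simon's paper proves Schauder estimates by a scaling/compactness (blowup-contradiction) argument rather than by Campanato decay; that is a genuinely different mechanism, although it reaches the same estimate. Both are legitimate and both are cited in the paper, so your approach matches one of the two references even if your attribution points at the other. If you wanted to mirror Simon's route instead, the structure would be: suppose the estimate fails for a sequence, rescale at the offending points to normalize the $C^\theta_P$ seminorm of $(\partial_t - \cL)u$ plus the $L^\infty$ norm, and use parabolic interior energy estimates plus Arzel\`a--Ascoli to extract a limit that solves a constant-coefficient homogeneous equation with a nontrivial affine-normalized $D^2$ or $\partial_t$ oscillation, contradicting Liouville-type rigidity for polynomial-growth caloric functions of the frozen system. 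Either route is acceptable here, and both deliver $C = C(n,\lambda,\Lambda,\theta)$.
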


Note that one has the global interpolation inequality
\begin{equation} \label{eq:regularity.interpolation}
	\Vert w \Vert_{L^\infty(\RR^n \times \RR)} \leq \eps [w]_{C^\theta_P(\RR^n \times \RR)} + C(n, \eps, q) \Vert w \Vert_{L^q(\RR^n \times \RR)},
\end{equation}
for all $q \in [1, \infty)$, $\eps > 0$. A classical absorption and localization  argument that combines \eqref{eq:regularity.interpolation} with standard H\"older interpolation inequalities (\cite[Lemma 7]{Schlag96}; see also \cite[(1.5)]{Simon97}) yields:

\begin{theo}[$C^{2,\theta}$-$C^\theta$-$L^q$ interior Schauder estimate] \label{theo:regularity.schauder.lq}
	If $u : \RR^{n+1}_+ \to \RR^Q$ is smooth, $q \in [1, \infty)$, and $B_1$ is a unit ball in $\RR^n$, then
	\begin{multline} \label{eq:regularity.schauder.lq}
				[\tfrac{\partial}{\partial t} u]_{C^{\theta}_P(B_1 \times [1,2]; \RR^Q)} + [D^2 u]_{C^{\theta}_P(B_1 \times [1,2]; \RR^Q)} \\
		\leq C \big( [(\tfrac{\partial}{\partial t} - \cL) u]_{C^{\theta}_P(B_1 \times [0, 2]; \RR^Q)} + \Vert u \Vert_{L^q(B_1 \times [0,2]; \RR^Q)} \big)
	\end{multline}
	for some constant $C = C(n, \lambda, \Lambda, \theta, q) > 0$.
\end{theo}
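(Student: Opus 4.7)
\medskip

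\noindent\textbf{Proof sketch (plan).} The strategy is to bootstrap the global $L^\infty$-based estimate of Theorem \ref{theo:regularity.schauder} to the local $L^q$-based one via the standard three-step scheme: first localize with a cutoff to obtain a local $L^\infty$-based estimate on nested parabolic cylinders, then use the interpolation \eqref{eq:regularity.interpolation} together with parabolic H\"older interpolations to trade $L^\infty$ for $L^q$ at the cost of a small multiple of the top-order seminorms, and finally close the argument with an absorption iteration on the family of nested cylinders.

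In more detail, I would fix nested parabolic cylinders $Q_\rho := B_\rho \times [2-\rho^2, 2]$ for $\rho \in [1, 3/2]$ (all contained in $B_1 \times [0, 2]$) and, for each pair $1 \leq r < R \leq 3/2$, a smooth cutoff $\chi = \chi_{r,R}$ with $\chi \equiv 1$ on $Q_r$, $\chi \equiv 0$ outside $Q_R$, and $\Vert \chi \Vert_{C^{2,\theta}_P} \lesssim (R-r)^{-2-\theta}$. Extending $\chi u$ by zero to $\RR^{n+1}_+$ and applying Theorem \ref{theo:regularity.schauder}, the commutator $(\tfrac{\partial}{\partial t} - \cL)(\chi u) - \chi (\tfrac{\partial}{\partial t} - \cL) u$ is a first-order expression in $u$ whose $C^\theta_P$ norm is controlled, via the product rule and \eqref{eq:regularity.operator.holder}, by $(R-r)^{-2-\theta} \Vert u \Vert_{C^{1,\theta}_P(Q_R)}$. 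Writing
\[
\Phi(\rho) := [\tfrac{\partial}{\partial t} u]_{C^\theta_P(Q_\rho; \RR^Q)} + [D^2 u]_{C^\theta_P(Q_\rho; \RR^Q)}
\]
and combining with the standard parabolic H\"older interpolation $\Vert u \Vert_{C^{1,\theta}_P(Q_R)} \leq \eta\, \Phi(R) + C_\eta \Vert u \Vert_{L^\infty(Q_R)}$ of \cite[(1.5)]{Simon97}, followed by a localized form of \eqref{eq:regularity.interpolation}, $\Vert u \Vert_{L^\infty(Q_R)} \leq \eta\, \Phi(R) + C_{\eta, q} \Vert u \Vert_{L^q(Q_R)}$, one obtains after choosing $\eta$ small enough that
\[
\Phi(r) \leq \tfrac12 \Phi(R) + C (R-r)^{-\gamma} \Big( [(\tfrac{\partial}{\partial t} - \cL) u]_{C^\theta_P(B_1 \times [0,2]; \RR^Q)} + \Vert u \Vert_{L^q(B_1 \times [0,2]; \RR^Q)} \Big),
\]
for some $\gamma = \gamma(n, \theta, q)$. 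The classical real-variable absorption lemma \cite[Lemma 7]{Schlag96} then converts this into a uniform bound on $\Phi(1)$ by the right-hand side, which is exactly \eqref{eq:regularity.schauder.lq}.

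The main technical nuisance is arranging the cutoff/localization so that the absorption step closes cleanly without the constants compounding: after multiplying by $\chi$ and applying the global $L^\infty$ estimate, one inherits $C^{1,\theta}_P$ norms of $u$ (rather than just $L^\infty$ norms), and these intermediate norms must be absorbed into the top-order seminorm $\Phi(R)$ with a small prefactor that survives iteration. Working with the continuous family $\{\Phi(\rho)\}_{\rho \in [1,3/2]}$ and invoking Schlag's one-variable absorption lemma is precisely what resolves this; once the commutator estimate and the two interpolations are arranged as above, the remainder of the argument is routine.
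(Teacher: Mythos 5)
Your proposal is the detailed write-up of exactly the argument the paper indicates: the paper's ``proof'' of Theorem \ref{theo:regularity.schauder.lq} is the single preceding sentence, which says that the statement follows from a ``classical absorption and localization argument'' combining \eqref{eq:regularity.interpolation} with the standard H\"older interpolation inequalities of \cite[Lemma 7]{Schlag96} and \cite[(1.5)]{Simon97}. Your three steps --- cutting off on nested parabolic cylinders and invoking Theorem \ref{theo:regularity.schauder} for $\chi u$, bounding the commutator by a lower-order H\"older norm, then downgrading that intermediate norm to $L^q$ via the two interpolation inequalities before closing with the real-variable absorption lemma --- are precisely what that sentence references. One minor bookkeeping slip: you take $\rho \in [1,3/2]$ so $Q_\rho = B_\rho \times [2-\rho^2,2]$ is \emph{not} contained in $B_1 \times [0,2]$; the nested cylinders should shrink (e.g.\ $\rho \in [1/2,1]$ with the conclusion on $Q_{1/2}$, or with time-only shrinking since the spatial ball in the theorem is fixed) so that all localization stays inside the domain on the right-hand side of \eqref{eq:regularity.schauder.lq}. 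This does not affect the structure of the argument, only the labeling of the cylinders.
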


\end{document}